\newtheorem*{cor}{Corollary}
\newtheorem{prop}{Proposition}
\newtheorem{clm}{Claim}
\newtheorem{defin}{Definition}
\newtheorem{theo}{Theorem}
\newtheorem{coro}{Corollary}
\newtheorem{lemm}{Lemma}
\newtheorem{rema}{Remark}
\newtheorem{defi}{Definition}
 \def\NN{{\mathbb N}}  
 \def\RR{{\mathbb R}}  \def\TT{{\mathbb T}}
 \def\ZZ{{\mathbb Z}}
\def\La{\Lambda}
\def\Om{\Omega}
\def\Ga{\Gamma}
\def\cA{{\cal A}}  \def\cG{{\cal G}}  
   \def\cN{{\cal N}} 
\def\cC{{\cal C}}   \def\cO{{\cal O}} 
\def\cE{{\cal E}}    
\def\cF{{\cal F}}
\title{Flexible periodic points}
\author{Christian Bonatti \and
Katsutoshi Shinohara}
\begin{document}

\maketitle
\begin{abstract}
 We define the notion of 
$\varepsilon$-flexible periodic  point: 
it is a periodic point with stable index equal to two 
whose dynamics restricted to the stable direction 
admits $\varepsilon$-perturbations 
both to a homothety and a saddle having an eigenvalue equal to one. 
 We show that $\varepsilon$-perturbation to 
an $\varepsilon$-flexible point allows to change it 
in a stable index one periodic point whose 
(one dimensional) stable manifold is an arbitrarily chosen $C^1$-curve. 
 We also show that the existence of flexible point is 
a general phenomenon among systems with a robustly 
non-hyperbolic two dimensional center-stable bundle. 

{ \smallskip
\noindent \textbf{Keywords:} Partial hyperbolicity, wild diffeomorphism. 

\noindent \textbf{2010 Mathematics Subject Classification:} 37C29,  37D30. }

\end{abstract}

% !TEX root =  flexible_01.tex

\section{Introduction}

Since the Poincar\'{e}'s discovery of the transverse homoclinic intersections
and the complex behaviors near them, the search of the transverse homoclinic
intersections, in other words, the control of the invariant manifolds of the systems,
has been one of the central problems in dynamical systems. 
In the late nineties of last century, a breakthrough was brought by 
Hayashi's connecting lemma (see \cite{H}).
It allows us to control the effect of perturbations on the invariant manifolds 
and enables us to create new intersections. 
This perturbation technique provides one basis 
of recent strong development of 
study of non-uniformly hyperbolic dynamical systems. 
For example,  \cite{CP}  uses connecting lemma and 
its generalizations for building heterodimensional cycles 
in order to characterize the robust non-hyperbolic behaviors. 

In the uniformly hyperbolic context, 
the invariant manifold theorem tells us their rigidity.
The local stable/unstable manifolds of a uniformly hyperbolic 
set are discs varying continuously with respect to the points 
and the variation of the diffeomorphisms. 
A consequence of this fact is the great constraint of the 
geometric behavior of invariant manifolds under small perturbations.
Such rigidities are paradoxically exploited for the construction 
of robustly non-hyperbolic systems such as 
Newhouse's example of robust tangencies (see \cite{N}) or 
Abraham-Smale's example  of 
robust heterodimensional cycles (see \cite{AS}). 

Meanwhile, the control of the variations of 
stable and unstable manifolds of periodic orbits 
under small perturbations in non-uniformly hyperbolic 
systems still remains to be an important issue.
On one hand, non-hyperbolic systems in general contain 
plenty of regions where the local dynamics exhibits  
hyperbolic behaviors, which also give us some rigidity 
of the invariant manifolds.
On the other hand, we may expect that the absence
of uniform hyperbolicity implies
the existence of periodic orbits whose invariant manifolds
has less rigidity so that
it can be altered considerably by small perturbations. 

A prototype of such argument can be found in \cite{BD1}.
The rescaling invariant nature of $C^1$-distance
gives a strong freedom for the change of relative 
position of objects in the homothetic regions,   
that is, contracting or repelling regions where 
the diffeomorphism is smoothly conjugated to a homothety. 
Furthermore, \cite{BD1} gave an example of non-uniformly 
hyperbolic systems in which the homothetic behaviors are 
quite abundant.  This enables us to construct 
interesting examples of dynamical systems,
such as the construction of \emph{universal dynamics} by the first author with D\'{i}az,
or the construction of heterodimensional cycles near the 
wild homoclinic classes by the second author \cite{S}.

The fact that non-hyperbolic dynamics may produce 
homothetic behavior come back to Ma\~n\'e argument in 
\cite{M} for surface diffeomorphism (see also \cite{PS}), and has been 
generalized in higher dimension in \cite{BB, BDP,BGV}.
These works are essentially within 
the scope of perturbations of the derivative
of periodic orbits and the conclusions of them 
provides us local information of perturbed systems.  
In this article, we pursue the possibility of such strategy 
and propose new, semi-local technique of the control of invariant manifolds. 

For many applications, 
we need to control the effect of the perturbation on the invariant manifold, 
for example  to keep an heteroclinic connection under  
the periodic orbit is changing of index. 
This is the aim of \cite{Gou} where Gourmelon uses invariant 
cone fields for keeping the strong stable manifold almost unchanged 
along the perturbation.  Here we follow a completely opposite strategy: 
we will use homothetic region (where there is no 
strictly invariant cone field) for obtaining a great freedom 
of choosing the invariant manifold after perturbing 
the derivative of the periodic orbit.

The aim of this paper is twofold. 
First, we consider diffeomorphisms of two dimensional manifolds 
and define the notions of flexible periodic points, which 
is an abstract sufficient condition on a periodic point such that 
the above strategy is well available.  
We investigate the possible perturbation of such points.
We extend the concept of flexible points
to diffeomorphisms in higher dimension 
stable index two periodic point and see 
that the perturbation technique proved in two dimensional 
setting is valid even in higher dimensional situations.
Second, we show that flexible points are 
quite abundant in some type of higher dimensional 
partially hyperbolic dynamical systems.

Let us now state our main results.

\subsection{Flexible points of surface diffeomorphisms}

First, we briefly review the notion of linear cocycles. 
Let $X$ be a topological space, $f :X \to X$ be 
a homeomorphism of $X$ and $E$ be a Riemannian vector bundle over $X$. 
A \emph{linear cocycle} on $\cE$
is a bundle isomorphism $\mathcal{A} : \cE \to \cE$ which is compatible 
with $f$.  In this article, we are mainly interested in the situation where
$f$ is a diffeomorphism of some manifold, $X$ is a periodic orbit 
of such dynamical systems and $\mathcal{A}$ is the restriction of differential map 
acting on the restriction of tangent bundle over the orbit. 
Such a system can be, by taking coordinate, identified with the situation where
$X = \mathbb{Z} / n \mathbb{Z}$ ($n$ is the period of the orbit),
$f(x) := x +1$ and $\cA$ is a sequence of regular matrices.
We call such system \emph{a linear cocycle over periodic orbit of period $n$}. 

Let $\mathcal{A}$, $\mathcal{B}$ be linear cocycles on $\cE$ over $(X, f)$. We put 
\[
\mathrm{dist}(\mathcal{A}, \mathcal{B}) := 
\sup \| \mathcal{A}(x) - \mathcal{B}(x) \| 
\]
and call it \emph{distance} between 
$\mathcal{A}$ and $\mathcal{B}$ ($|x|$ ranges over all unit vectors in all fibers).
This defines a topology on the space of linear cocycles on $\cE$.
Let $\mathcal{A}_t$ denote a continuous one-parameter family of cocycles, 
that is, a continuous map from some interval to the space of cocycles. 
We put
\[
\mathrm{diam}(\mathcal{A}_t) := 
\sup_{s< t}  \mathrm{dist} (\mathcal{A}_s, \mathcal{A}_t)
\]
and call it \emph{diameter} of $\mathcal{A}_t$

Now, let us give a precise definition of \emph{flexible cocycles}.

\begin{defi}\label{d.flexible}
Let $\cA=\{A_i\}_{i\in\ZZ /n\ZZ }$, $A_i\in \mathrm{GL}(2,\RR)$ 
be a linear cocycle over a periodic orbit of period $n > 0$. Fix $\varepsilon>0$.
We say that $\cA$ is \emph{$\varepsilon$-flexible} if there is a continuous 
one-parameter family of linear cocycles $\cA_t=\{A_{i,t}\}_{i\in\ZZ /n\ZZ}$ 
defined over $t \in [-1, 1]$ such that:
\begin{itemize}
\item $\mathrm{diam}(\mathcal{A}_t)  < \varepsilon$;
\item $A_{i,0}=A_i$, for every $i\in\{0,\dots,n-1\}$;
\item $A_{-1}$ is an homothety;
\item for every $t\in (-1,1)$, the product 
$A_t:= (A_{n-1,t})\cdots (A_{0,t})$ has 
two distinct positive contracting eigenvalues.
\item Let $\lambda_t$ denote the smallest eigenvalue of 
 the product $A_{t}$. Then 
 $\max_{-1 \leq t \leq 1} \lambda_t <1$.
\item $A_1$ has a real positive eingenvalue equal to $1$. 
\end{itemize}
\end{defi}

\begin{defi}
A periodic orbit of a two dimensional 
diffeomorphism is called $\varepsilon$-flexible 
if the linear cocycle of the  derivatives along 
its orbit is $\varepsilon$-flexible.
\end{defi}

The interesting feature of $\varepsilon$-flexible points is that 
it has a great freedom for changing the position of 
their strong stable manifold by an $\varepsilon$-perturbation 
supported in an arbitrarily small neighborhood of the orbit.
More precisely, we can choose the 
fundamental domain of this strong stable manifold 
to be any prescribed curve subject to the unique 
limitation that it should remain in the same isotopy class
in the orbit space.  

Let us explain that. 
Let $\mathcal{N}$ be a compact neighborhood of some attracting 
periodic orbit $\mathcal{O}(x)$ of some diffeomorphism 
$F$ on two dimensional manifold.  
Suppose that:
\begin{itemize}
 \item the derivative of $F$ on $\mathcal{O}(x)$ in the period has 
two distinct contracting 
eigenvalues.
\item $\mathcal{N}$ is $F$-invariant and is contained in the basin of $\mathcal{O}(x)$. 
\end{itemize}

Let us consider the \emph{punctured neighborhood}
$\mathcal{N} \setminus \mathcal{O}(x)$ and take the quotient space 
by the orbit equivalence (that is, $x \sim y$ if and only if $F^m(x) = F^n(y)$
for some $m, n \geq 0$). 
We denote it by $T^{\infty}_F$ and call it \emph{orbit space}.
Then, $T^{\infty}_F$ is naturally 
identified with the two dimensional torus $\TT^2$, 
This torus is naturally endowed with a homotopy 
class of a \emph{parallel}, which consists in the class of an 
essential circle around $\mathcal{O}(x)$.
Moreover, the strong stable manifold $W^{ss}(x)\setminus \{x\}$ 
projects to the quotient space as two parallel circles, 
cutting the parallel with intersection number $1$. 
We call these two simple closed curves in $T_F^\infty$ \emph{meridians}.

Then, consider another diffeomorphisms  $G$  which is a
perturbation of $F$ with small support concentrated around 
$\mathcal{O}(x)$ and preserving the orbit of $\cO(x)$.
Let $\Lambda_G := \cap_{i \geq 0}G^{i}(\mathcal{N})$
denote the locally maximal invariant set in $\mathcal{N}$. 
Now, consider the set in $\mathcal{N} \setminus \Lambda_G$ 
and take the quotient by the orbit equivalence under $G$. 
We denote the orbit space by $T^{\infty}_G$.
We identify $T^{\infty}_F$ and $T^{\infty}_G$ as follows.
First, note that the restriction of $G|_{\mathcal{N}\setminus\Lambda_G}$, 
can be conjugated to $F|_{\mathcal{N}\setminus\mathcal{O}(x)}$ by a unique 
homeomorphism $h$ which coincides with the 
identity map outside the support of the perturbation 
and is $C^1$ out of $\mathcal{O}(x)$. 
Let us call this homeomorphism \emph{standard conjugacy}. 
It gives us an identification between $T^{\infty}_F$ and  $T^{\infty}_G$

Under this identification, the freedom of flexible point
mentioned above can be formulated as follows.
\begin{theo}\label{t.flexible}
Let $f$ be a $C^1$-diffeomorphism of a surface and $D$ be an attracting 
periodic disc of period $\pi$, that is, 
$D$, $f(D),\dots f^{\pi-1}(D)$ are pairwise disjoint 
and $f^\pi(D)$ is contained in the interior of $D$. 
Assume that $D$ is contained in the stable manifold of 
an $\varepsilon$-flexible periodic point $p$ contained in $D$. 
Let $\gamma=\gamma_1 \cup \gamma_2 \subset T^{\infty}_{f}$ 
be the two simple closed curves which $W^{ss}(p)$ projects to.

Then, for any pair of $C^1$-curves $\sigma=\sigma_1\cup \sigma_2$ 
embedded in $T^{\infty}_{f}$ isotopic to $\gamma_1 \cup \gamma_2$,
there is an $\varepsilon$-perturbation $g$, 
supported in an arbitrarily small neighborhood of $p$
(which is also sufficiently small so that 
we can define the standard conjugacy) such that $g$ satisfies 
the following:
\begin{itemize}
 \item $p$ is a periodic attracting point having a eigenvalue 
$\lambda_1\in]0,1[$ and a eigenvalue $\lambda_2=1$. 
\item $D$ is contained in the basin of $p$.
\item $W^{ss}(p, g)$ projects to 
$(\sigma_1 \cup \sigma_2) \subset T^{\infty}_{g} \simeq T^{\infty}_{f}$.
\end{itemize}
\end{theo}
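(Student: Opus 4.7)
The plan is to produce $g$ by threading the passage $f \rightsquigarrow g$ through the homothety $\cA_{-1}$, using scale invariance of homotheties as an engine for $C^1$-cheap geometric rearrangement. I will decompose the construction into three stages: first, a local derivative perturbation along $\cO(p)$ deforming the cocycle from $\cA_0$ to $\cA_{-1}$; second, a nonlinear ``twist'' inserted in the homothetic region that records the desired new position of the strong stable manifold; third, a further local derivative perturbation deforming the cocycle from $\cA_{-1}$ to $\cA_1$. Stages one and three are $C^1$-controlled by the diameter bound $\mathrm{diam}(\cA_t) < \varepsilon$, and their supports can be arranged to be disjoint and contained in any prescribed neighborhood of $\cO(p)$.

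After stage one, followed by a $C^1$-small normalizing conjugacy (possible because $\lambda_{-1}\in(0,1)$ admits only the trivial resonances), the return map coincides with the linear homothety $z \mapsto \lambda_{-1} z$ on a small disc $B$ around $p$. The orbit space of $B \setminus \{p\}$ under this homothety is a linking torus canonically identified with $T^\infty_f$, in which the original meridians $\gamma = \gamma_1 \cup \gamma_2$ appear as radial lines. Using that $\sigma$ is isotopic to $\gamma$, I pick a smooth isotopy of pairs of simple closed curves connecting $\gamma$ to $\sigma$ in this torus and realize it as the quotient of a diffeomorphism $\phi$ of $B$ supported in an arbitrarily small fundamental annular domain. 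The crucial point is that the $C^1$-norm of $\phi$ is invariant under the homothetic conjugacies $z \mapsto \lambda_{-1}^{-n} \phi(\lambda_{-1}^n z)$; exploiting this via the telescoping construction of \cite{BD1}, the $C^1$ distance of $\phi$ from the identity can be made as small as wished, independently of the magnitude of the prescribed isotopy. Inserting $\phi$ between stages one and three thus programs the target pattern $\sigma$ in the quotient at negligible $C^1$ cost.

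In stage three, the derivative perturbation carries the cocycle along the family to $\cA_1$, which has eigenvalues $\lambda_1 \in (0,1)$ and $1$; the strong stable manifold $W^{ss}(p,g)$ emerges as the unique one-dimensional $g$-invariant manifold tangent at $p$ to the $\lambda_1$-eigendirection. The condition $\max_t \lambda_t < 1$ ensures that every map along the deformation contracts $D$, so $D$ remains in the basin throughout the construction and the standard conjugacy is well defined. The main obstacle is to verify that $W^{ss}(p,g)$ projects precisely to $\sigma$ in $T^\infty_g \simeq T^\infty_f$, and not merely to a curve isotopic to $\sigma$. The expected mechanism is that, close to the homothety $\cA_{-1}$, iterates nearly equalize all directions, so the placement of $W^{ss}(p,g)$ is dictated by the nonlinear twist $\phi$ rather than by the small eigenvalue asymmetry built in during stage three. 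Making this precise, i.e., showing that the limit foliation inside $B$ is exactly the one inherited from the stage-two isotopy, is the primary technical step; it should follow from a hyperbolic-tracking argument comparing the (slow) contraction toward the $\lambda_1$-eigendirection against the twist-induced displacement of nearby orbits, coupled with a final $C^1$-small adjustment inside $\phi$ to absorb any residual error.
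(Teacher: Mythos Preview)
Your strategy coincides with the paper's: realize the flexible path $\cA_t$ radially as a single diffeomorphism cocycle, creating a homothetic annulus in which a fragmentation-type twist is inserted at arbitrarily small $C^1$ cost. The paper packages your stages~1 and~3 together into what it calls a \emph{retardable cocycle} and only afterwards performs your stage~2.

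The ``main obstacle'' you flag is an artifact of your ordering and dissolves under that reorganization. After stage~1 alone the map is a homothety near $p$ and has \emph{no} strong stable manifold; the curves $\gamma$ sitting in the orbit space are merely the images of $W^{ss}(p,f)$ under the standard conjugacy and carry no dynamical meaning for the intermediate map, so choosing $\phi$ to send $\gamma$ to $\sigma$ does not by itself pin down where the eventual $W^{ss}(p,g)$ will land once stage~3 creates the eigendirection. The clean fix is to build the full radial cocycle first (outer shell $\cA_0 \leadsto \cA_{-1}$, homothetic annulus with arbitrarily many fundamental domains, inner shell $\cA_{-1} \leadsto \cA_1$). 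This map already possesses a one-dimensional strong stable manifold projecting to some pair $\gamma'_1 \cup \gamma'_2$; the hypothesis that each $\cA_t$ for $t\in(-1,1)$ has two distinct real positive contracting eigenvalues supplies an isotopy showing $\gamma'$ is isotopic to $\gamma$, hence to $\sigma$. Now choose a torus diffeomorphism $\psi$ with $\psi(\gamma'_i)=\sigma_i$, fragment it, and lift the pieces to successive homothetic fundamental domains. Because the twist is supported strictly outside the inner shell where the eigendirection is born, a one-line invariance check shows the new meridians are \emph{exactly} $\psi(\gamma'_i)=\sigma_i$. No tracking argument or residual adjustment is needed; your proposed hyperbolic comparison would in any case be delicate near $t=-1$, where the eigenvalue gap vanishes.
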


The orbit of $p$ for $g$ is a non-hyperbolic attracting point, 
having an eigenvalue equal to $1$. 
By an extra, arbitrarily small perturbation, 
one can change the index of $p$ so that the strong stable 
manifold becomes the new stable manifold. Therefore we have the following.
\begin{cor} Under the hypotheses of Theorem~\ref{t.flexible},  
there is an $\varepsilon$-perturbation $h$ of $f$, supported 
in an arbitrarily small neighborhood of $p$
so that 
\begin{itemize}
 \item $p$ is a periodic sadlle point having two real eigenvalues $0<\lambda_1<1<\lambda_2$.
\item $W^s(p)\setminus \cO(p)$ is disjoint from the maximal invariant set $\Lambda_H$.
\item $W^{s}(p, h)$ projects to 
$(\sigma_1 \cup \sigma_2) \subset T^{\infty}_{h}\simeq T^{\infty}_f$.
\end{itemize}
 \end{cor}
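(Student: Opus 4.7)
The plan is to apply Theorem~\ref{t.flexible} first to obtain an intermediate diffeomorphism $g$ at which $p$ has one neutral eigenvalue, and then perform an arbitrarily small additional perturbation, localized near $\mathcal{O}(p)$ and tailored to preserve the strong-stable manifold of $g$, so as to turn the neutral eigenvalue $1$ into some $1+\eta>1$. This converts $p$ into a saddle whose new one-dimensional stable manifold is exactly $W^{ss}(p,g)$, which by Theorem~\ref{t.flexible} projects to the prescribed $\sigma_1\cup\sigma_2$.

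Because the family witnessing $\varepsilon$-flexibility in Definition~\ref{d.flexible} has diameter \emph{strictly} less than $\varepsilon$, Theorem~\ref{t.flexible} can be applied so as to leave some $C^1$-budget $\delta>0$, producing $g$ with $\|g-f\|_{C^1}<\varepsilon-\delta$, supported in an arbitrarily small neighborhood $U$ of $\mathcal{O}(p)$, with $p$ attracting of eigenvalues $\lambda_1\in(0,1)$ and $\lambda_2=1$, $D$ in the basin of $p$ (so $\Lambda_g=\mathcal{O}(p)$), and $W^{ss}(p,g)$ projecting to $\sigma_1\cup\sigma_2$. Fix a chart $V\Subset U$ around each point of $\mathcal{O}(p)$ adapted to the invariant splitting of $g$ at $p$, with the $x$-axis parameterizing $W^{ss}_{\mathrm{loc}}(p,g)$, and compose $g$ with a cut-off diffeomorphism which fixes the $x$-axis pointwise and multiplies the $y$-coordinate at $p$ by $1+\eta$ with $0<\eta<\delta$. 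The resulting $h$ satisfies $\|h-f\|_{C^1}<\varepsilon$ and is supported in $U$; moreover $p$ is a saddle for $h$ with eigenvalues $0<\lambda_1<1<1+\eta$, which gives the first bullet.

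Since the additional perturbation preserves $W^{ss}_{\mathrm{loc}}(p,g)$ and this manifold is tangent at $p$ to the $\lambda_1$-eigenspace of $Dh(p)$, it coincides with $W^s_{\mathrm{loc}}(p,h)$ by uniqueness of the local stable manifold. Outside $V$ we have $h=g$, so backward-extending the local manifolds by the respective dynamics gives $W^s(p,h)=W^{ss}(p,g)$ as subsets of $\mathcal{N}$. The standard conjugacies $\phi_g$ and $\phi_h$ both equal the identity outside the common support, and in particular agree on $\mathcal{N}\setminus V$, so the induced identifications $T^\infty_g\simeq T^\infty_f$ and $T^\infty_h\simeq T^\infty_f$ send the projections of $W^{ss}(p,g)$ and $W^s(p,h)$ to the same curve in $T^\infty_f$, namely $\sigma_1\cup\sigma_2$. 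This is the third bullet.

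For the remaining bullet, let $x\in W^s(p,h)\setminus\mathcal{O}(p)=W^{ss}(p,g)\setminus\mathcal{O}(p)$. Inside $V$ the backward orbit of $x$ under $h$ moves along the $x$-axis with uniform expansion rate $\lambda_1^{-1}>1$ per iterate of $h^\pi$, so it exits $V$ in finite time at some $y\in W^{ss}(p,g)\cap(\mathcal{N}\setminus V)$; beyond this point $h^{-1}=g^{-1}$. Because $\Lambda_g=\mathcal{O}(p)\subset V$, no point of $\mathcal{N}\setminus V$ has a complete backward $g$-orbit inside $\mathcal{N}$, so the backward $g$-orbit of $y$ (equivalently the backward $h$-orbit of $x$) must eventually leave $\mathcal{N}$, whence $x\notin\Lambda_h$. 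The main subtlety is the non-uniformity of the exit time from $V$, which tends to infinity as $x\to\mathcal{O}(p)$ and so forbids a naive continuity argument; the fix is structural, relying on the fact that by construction the backward dynamics along $W^s(p,h)$ inside $V$ is \emph{exactly} linear with eigenvalue $\lambda_1^{-1}$, so finite-time exit is algebraically guaranteed for every $x\neq p$, and the remaining escape from $\mathcal{N}$ is a $g$-statement outside $V$.
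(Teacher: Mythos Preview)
Your proposal is correct and follows essentially the same approach as the paper. The paper's proof of this corollary is only the one-sentence remark preceding it: ``By an extra, arbitrarily small perturbation, one can change the index of $p$ so that the strong stable manifold becomes the new stable manifold.'' You carry out precisely this idea and supply the details the paper omits --- the $C^1$-budget coming from the strict inequality $\mathrm{diam}(\mathcal{A}_t)<\varepsilon$, the construction of the local perturbation fixing $W^{ss}_{\mathrm{loc}}(p,g)$, and the verification that backward orbits along $W^s(p,h)$ escape $\mathcal{N}$.

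One minor overstatement: you assert that ``the backward dynamics along $W^s(p,h)$ inside $V$ is \emph{exactly} linear with eigenvalue $\lambda_1^{-1}$.'' Theorem~\ref{t.flexible} does not guarantee that $g$ is linear on $W^{ss}_{\mathrm{loc}}(p,g)$; it only gives the eigenvalue $\lambda_1\in(0,1)$ at $p$. This is harmless, since uniform contraction of $g^\pi$ on $W^{ss}_{\mathrm{loc}}$ near $p$ already forces finite-time backward exit from any compact $V$. Likewise, to conclude that the backward $h$-orbit coincides with the backward $g$-orbit once outside $V$, you implicitly need $V$ forward-invariant under $g$ (so that $y\notin V\Rightarrow g^{-1}(y)\notin V$); this is available because $p$ is attracting for $g$ and $V$ may be shrunk, but it is worth making explicit.
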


The proof of Theorem \ref{t.flexible}
is given in Section 2, 3 and 4.

\subsection{Stable index two flexible points}

The flexible points are certainly not usual for surface diffeomorphisms.  
However, it appears very naturally in 
%of two dimensional iterated functions systems or 
higher dimensional partially hyperbolic systems with two dimensional stable directions.
To explain it, first we extend the definition of flexible points in higher dimensional setting
and see the direct consequence of Theorem \ref{t.flexible} about them.
We prepare one notion. Let $f$ be a diffemorphism of some differential manifold, 
and $p$ be a (not necessarily hyperbolic) periodic point of it. 
Then the \emph{stable index} of $p$, is the number 
of eigenvalues of the differential of the first return map with absolute value 
strictly less than one counted with multiplicity. 

\begin{defin}Let $M$ be a compact manifold endowed with a Riemann metric, 
$f$ a diffeomorphism of $M$ and $\varepsilon>0$.  
A stable index $2$ hyperbolic period point $x$ is called $\varepsilon$-flexible 
if the restriction of $Df$ to the stable direction
over $\mathcal{O}(x)$ is an $\varepsilon$-flexible cocycle. 
\end{defin}

\begin{rema}\label{r.rflex}
 The notion of flexibility is a robust property in the 
following sense: if $q_f$ is $\varepsilon$-flexible periodic orbit of $f$
 then there is a $C^1$-neighborhood $\mathcal{U}$ of $f$ so that  every $g\in\mathcal{U}$ has a well defined continuation $q_g$ of $q_f$ and 
 $q_g$ is $2\varepsilon$-flexible. 
\end{rema}

Let $x$ be a stable index $2$ periodic point of period $n$, with 
two distinct real positive contracting eigenvalues.
For flexible points in this setting, we can also define the notion of
orbit space, parallel, and meridians as follows.

Consider the neighborhood $\cN$ in the local 
stable manifold of $x$ which is strictly positively invariant.  
The orbit space of $\cN \setminus \mathcal{O}(x)$ under $f$
is again diffeomorphic to the torus $\TT^2$ and we denote it by $T_f^{\infty}$.  
As is in the previous case, this torus is naturally endowed with parallel and 
the strong stable manifold of $x$ induce by projection 
on $T^{\infty}_f$ two disjoint simple curved called meridians.  

We consider perturbation $g$ of $f$ whose support 
is concentrated to $\cO(x)$ and which 
preserves the forward invariant property of $\mathcal{N}$.
Then,
the space of $g$-orbits in $D \setminus (\Lambda_g)$
(where $\Lambda_g|$ is the locally maximal invariant set 
of $g$ in $\cN$), denoted by $T^{\infty}_g$ are naturally identified with
$T^{\infty}_f$ via the standard conjugacy.

It is easy to observe the following:

\begin{rema} Any $\varepsilon$-perturbation of the restriction 
of $f$ to the positive orbit of $\cN$, 
in a sufficiently small neighborhood of $\cO(x)$ can be realized 
as a $\varepsilon$-perturbation of $f$.
Furthermore, if the perturbation prerserves the periodic 
orbit $\cN$ then one may required that the eigenvalues of 
$x$ transverse to $\cN$ are kept unchanged.  
\end{rema}

Therefore, as a direct corollary of Theorem~\ref{t.flexible} we obtain the following:
\begin{coro} \label{c.fleper}
Let $f$ be a diffeomorphism of a compact manifold, 
$\varepsilon>0$, and $x$ be a 
$\varepsilon$-flexible stable  index $2$ hyperbolic periodic point. 
Fix a strictly forward invariant neighborhood $\cN$ of $\mathcal{O}(x)$ 
contained in the 
local stable manifold of $x$. Let $T^{\infty}_f$, endowed with the meridians 
$\gamma_1,\gamma_2$, denote the orbit space of the 
punctured stable manifold of $x$. 

Let $\sigma_1$ and $\sigma_2$ be two disjoint simple curves 
isotopic to the meridians. then there is an 
$\varepsilon$-perturbation $g$ of $f$, supported in an arbitrarily 
small neighborhood of $\cO(x)$, preserving the forward invariance of $\cN$, 
preserving the orbit $\cO(x)$ such that the following holds:
\begin{itemize}
 \item The eigenvalues in the directions 
 transversal to $\cN$ at $x$ of $g$ are same for that of $f$.
 \item $\cO(x)$ is a stable index $1$ sadlle point: there is a contracting eigenvalue tangent to $\cN$. 
 \item the punctured stable manifold of $x$ is disjoint form the maximal invariant in $\cN$, and the projection of 
 $W^s(x)\setminus\{x\}$ on $T^{\infty}_f$ is precisely the two curves $\sigma_1\cup\sigma_2$.
\end{itemize}
\end{coro}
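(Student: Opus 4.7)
The plan is to deduce Corollary~\ref{c.fleper} directly from Theorem~\ref{t.flexible} (more precisely, from the saddle-producing Corollary stated just after it), applied inside the local stable manifold $W^s_{\mathrm{loc}}(x)$. Since $\cN$ is contained in $W^s_{\mathrm{loc}}(x)$, which is an $f$-invariant $C^1$-immersed surface whose tangent space along $\cO(x)$ is precisely the stable bundle $E^s$, the restriction $f|_{W^s_{\mathrm{loc}}(x)}$ is a $2$-dimensional $C^1$-diffeomorphism for which $\cN$ is a strictly forward invariant disc around the attracting periodic orbit $\cO(x)$. By definition, $\varepsilon$-flexibility of $x$ for $f$ is exactly the $\varepsilon$-flexibility of $\cO(x)$ as a surface periodic orbit of $f|_{W^s_{\mathrm{loc}}(x)}$. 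Moreover, $T^{\infty}_f$ and its meridians only depend on the restriction of $f$ to $\cN\setminus\cO(x)$, so they coincide with the orbit space and meridians of $f|_{W^s_{\mathrm{loc}}(x)}$, and the prescribed curves $\sigma_1,\sigma_2$ can be used directly as target curves inside $W^s_{\mathrm{loc}}(x)$.

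I would then apply the Corollary following Theorem~\ref{t.flexible} to $f|_{W^s_{\mathrm{loc}}(x)}$ with target $\sigma_1,\sigma_2$. This produces a $C^1$-perturbation $\tilde h$ of $f|_{W^s_{\mathrm{loc}}(x)}$ at distance less than $\varepsilon$, supported in an arbitrarily small neighborhood of $\cO(x)$ inside $W^s_{\mathrm{loc}}(x)$, such that at $\cO(x)$ the tangential derivative has eigenvalues $0<\lambda_1<1<\lambda_2$, the punctured stable manifold of $x$ for $\tilde h$ is disjoint from the maximal invariant set in $\cN$, and projects onto $\sigma_1\cup\sigma_2$. To lift $\tilde h$ to a perturbation $g$ of $f$ on the ambient manifold $M$, I would invoke the remark stated just before the corollary: using a tubular neighborhood of $W^s_{\mathrm{loc}}(x)$ near $\cO(x)$ that trivializes the normal bundle, together with a smooth cut-off in the normal direction, any $\varepsilon$-perturbation of $f|_{\cN}$ supported near $\cO(x)$ can be realized as an $\varepsilon$-perturbation $g$ of $f$ on $M$, supported arbitrarily close to $\cO(x)$, leaving the transverse eigenvalues at $\cO(x)$ unchanged and preserving the forward invariance of $\cN$. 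With this $g$, all three conclusions of the corollary follow at once: transverse eigenvalues equal those of $f$ by construction; the tangential eigenvalues are $\lambda_1<1<\lambda_2$ so $\cO(x)$ becomes a stable-index-one saddle with the contracting direction tangent to $\cN$; and the new stable manifold of $x$, being $1$-dimensional and tangent to the $\lambda_1$-eigenline inside $W^s_{\mathrm{loc}}(x,f)$, is $g$-invariant, coincides with the curve built by $\tilde h$ inside $\cN$, and hence projects onto $\sigma_1\cup\sigma_2$.

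The main (purely technical) step is the transverse extension. One must check that extending $\tilde h$ normally by a cut-off does not inflate the $C^1$-norm beyond $\varepsilon$, that the forward invariance of $\cN$ is preserved, and that the stable manifold of $x$ for $g$ really coincides with the $1$-dimensional curve produced inside $W^s_{\mathrm{loc}}(x,f)$. All of this reduces to a standard tubular neighborhood argument using the hyperbolicity of $f$ transverse to $W^s_{\mathrm{loc}}(x)$ and the freedom to shrink the support of $\tilde h$; this is precisely the content of the preceding remark. Once it is in hand, the corollary is genuinely a direct consequence of the $2$-dimensional theorem, as advertised.
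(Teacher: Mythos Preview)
Your approach is correct and matches the paper's: the corollary is stated there as a direct consequence of Theorem~\ref{t.flexible} (via its saddle-producing corollary) applied to the restriction of $f$ to the two-dimensional local stable manifold, together with the preceding Remark that lifts a small tangential perturbation to an ambient $\varepsilon$-perturbation keeping the transverse eigenvalues unchanged. You have spelled out exactly this, including the identification of $T^\infty_f$ and the meridians with those of the restricted surface dynamics, so nothing further is needed.
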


As a conclusion, for flexible points in this setting,
we also have great freedom for the choice of position of 
strong stable manifolds.
Meanwhile, there is a difference.
In higher dimensional setting, we have a priori few control 
on the effect of the perturbation to the local unstable manifold of $x$, 
and therefore on the position of the local strong unstable manifold of $g$.  

More precisely, if the angles between the unstable bundle 
over $\cO(x)$ and the orthogonal to the stable bundle
is very small all along $\cO(x)$, then every $\varepsilon$ 
perturbation of $f$ in the local stable manifold of $\cO(x)$,
supported in a very small neighborhood of $\cO(x)$ and 
preserving $\cO(x)$ may be realized as a 
$\varepsilon$-perturbation $g$ of $f$, 
supported in a small neighborhood of $\cO(x)$ and which 
coincides with $f$ on the local unstable manifold of $\cO(x)$. 
On the other hand, this angle is large, that is, if the angle between the stable 
and unstable bundles is very small at some point of 
$\cO(x)$, then perturbing $f$ in the stable manifold without 
perturbing the unstable one can costs a lot.

Such a difference can be a serious problem if one applies
this technique to the problem of (non-)existence of homoclinic intersections.
However, if we have a priori estimates on the angles mentioned above, 
the perturbation technique suggested in Corollary \ref{c.fleper} works well. 
Note that such a priori  estimates are available in the case where the system
admits partially hyperbolic splitting, or more generally, dominated splittings.

\subsection{Abundance of flexible points}

At a first glance, the definition of flexible points may look strange, 
as it claims the existence of perturbations to two completely different situations. 
However, the fact is that 
such a situation is quite abundant in the context of 
non-uniformly hyperbolic situations as we will see below. 

First let us give one precise statement in the form of a $C^1$-generic property.
\begin{theo}\label{t.generic} 
There is a residual subset $\cG\subset \mathrm{Diff}^1(M)$ so that 
for every $f \in\cG$, for any 
chain recurrence class $C$ containing
\begin{itemize}
 \item a periodic point $p$ of stable index $2$ with complex (non real) contracting eigenvalue
 \item a periodic point $q$ of stable index $1$
\end{itemize}
and for any $\varepsilon>0$ there are $\varepsilon$-periodic 
point $p_n$ homoclinically related to $p$ and whose orbits 
$\gamma_n$ converges to  the chain recurrence class $C$ in the 
Hausdorff topology.
\end{theo}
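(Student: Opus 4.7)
The plan is to combine $C^1$-generic structural results on chain recurrence classes with a cocycle-deformation argument in the spirit of Bonatti--Díaz--Pujals. I would work inside a residual set $\cG\subset\mathrm{Diff}^1(M)$ on which the classical generic properties hold: Kupka--Smale, Pugh's closing lemma, the Bonatti--Crovisier theorem identifying every chain recurrence class with the homoclinic class of any of its hyperbolic periodic points, a $C^1$-generic form of Hayashi's connecting lemma, and the robustness Remark~\ref{r.rflex} that lets one upgrade any flexibility statement to a residual one. Because $C$ contains periodic points of two different stable indices it is not hyperbolic, and we have $C=H(p,f)$; stable-index-$2$ periodic orbits homoclinically related to $p$ and visiting prescribed finite collections of points of $C$ are therefore dense in $C$ for the Hausdorff topology.

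For each accuracy $\delta>0$ the connecting lemma produces a stable-index-$2$ periodic orbit $\gamma$ homoclinically related to $p$, at Hausdorff distance at most $\delta$ from $C$, whose itinerary contains a long passage $\pi_p$ shadowing $\cO(p)$ and a long passage $\pi_q$ shadowing $\cO(q)$, joined by short transition arcs. The freedom in choosing $|\pi_p|$ and $|\pi_q|$ will be decisive. The heart of the argument is to then check that, by an additional arbitrarily small $C^1$-perturbation of $f$ supported in tubular neighborhoods of $\pi_p$ and $\pi_q$, the stable cocycle $\cA$ of $Df$ along $\gamma$ becomes $\varepsilon$-flexible, i.e.\ admits a continuous family $\{\cA_t\}_{t\in[-1,1]}$ with $\cA_0=\cA$, $\cA_{-1}$ a homothety, $\cA_1$ with a positive eigenvalue equal to $1$, and cocycle diameter $<\varepsilon$.

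For the homothety side, along $\pi_p$ the derivative is, in appropriate coordinates, a scalar times a genuine rotation (this uses in an essential way that $p$ has \emph{non-real} contracting eigenvalues, since real distinct eigenvalues would produce a dominated splitting inside $E^s(p)$ that blocks any rotational freedom). By Bonatti--Díaz--Pujals, small per-iterate rotations of the stable plane along $\pi_p$ can tune the cumulative rotation angle of the period return to any prescribed value, while small per-iterate radial rescalings along $\pi_q$ can tune the cumulative contraction ratio; together they realize a homothety at $t=-1$. For the eigenvalue-$1$ side I use that $q$ has stable index $1$, so the cocycle along $\pi_q$ has one eigenvalue smaller and one larger than $1$ along the projection to $E^s(p)$-type directions; small per-iterate perturbations spread along $\pi_q$ can tilt the weak Oseledets direction so that the smallest eigenvalue of the product crosses $1$ continuously in $t$, while the strong eigenvalue stays $<1$, as demanded by Definition~\ref{d.flexible}. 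Concatenating these two families at $t=0$ yields the required path.

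The main obstacle is controlling the \emph{cocycle distance}, not merely the distance between the full time-period products: the cumulative rotation/rescaling adjustments are fixed by $\cA$, but their per-step increments can be made arbitrarily small by taking $|\pi_p|$ and $|\pi_q|$ large, which is precisely what the freedom in the connecting-lemma construction gives. This spreading-the-perturbation argument, and the cone-control needed so that tilting Oseledets directions along $\pi_q$ does not destroy the homoclinic relation with $p$, form the technical core. Once the construction succeeds for any fixed $f\in\cG$, $\varepsilon>0$ and $\delta>0$, a standard diagonal argument over countable dense choices of $\varepsilon$ and $\delta$, combined with Remark~\ref{r.rflex}, upgrades the statement to a residual subset of $\mathrm{Diff}^1(M)$.
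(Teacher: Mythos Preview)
Your proposal follows the same high-level strategy as the paper---rotational freedom from the complex stable eigenvalue of $p$, weak/neutral behaviour from the index-$1$ point $q$, and spreading perturbations along a long orbit to keep the cocycle distance small---but the organization is different and one technical point is genuinely missing.

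The paper does \emph{not} build the flexible orbit directly from $p$ and $q$ via the connecting lemma. It first invokes \cite{BD2} to upgrade the coexistence of indices to a \emph{robust} heterodimensional cycle, and then works in two stages: Proposition~\ref{p.weak} (using \cite{ABCDW}) produces an index-$2$ periodic point $p_2$ homoclinically related to $p$ with a real stable eigenvalue in $[1-\nu,1)$ and smallest exponent bounded away from $0$; only then does Proposition~\ref{p.weak-flexible} run the transition machinery of \cite{BDP} inside a single basic set containing both $p_2$ and a complex-eigenvalue point. The advantage of this detour is that all cocycle manipulations take place on orbits with a uniform index-$2$ hyperbolic structure, and the robust cycle guarantees that the homoclinic relation with $p$ survives each perturbation. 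Your one-shot construction, with $\gamma$ passing near $q$ directly, can be made to work but needs an argument that the two-dimensional stable bundle of $\gamma$ tracks $E^s(q)\oplus E^c(q)$ along $\pi_q$ with controlled angles; the paper sidesteps this via the intermediate point $p_2$ and the diagonalising-coordinate bound of Lemma~\ref{l.diag}.

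The genuine gap is the path condition in Definition~\ref{d.flexible}: for every $t\in(-1,1)$ the product $A_t$ must have \emph{two distinct positive contracting} eigenvalues, and the smallest eigenvalue must stay below $1$ throughout. Your description (``tune the cumulative rotation angle \dots\ tune the cumulative contraction ratio'') produces the correct endpoints $\cA_{-1}$ and $\cA_1$, but says nothing about the spectrum of $A_t$ along the way; a generic interpolation will pass through complex or repeated eigenvalues. The paper handles this with a specific algebraic construction: the orbit is arranged so that its return map has the form $cQ^{m}P^{l}$ with $P$ diagonal and $Q$ a homothety, and the path toward $\cA_{-1}$ is obtained by inserting $R(-\pi t/2)$ and $R(\pi t/2)$ at two symmetric locations, reducing the eigenvalue computation to Lemma~\ref{l.homoper} on $R(-t)P^{l}R(t)P^{l}$. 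Without an analogue of this lemma your path is not known to stay in the admissible region. (Incidentally, your sentence ``the smallest eigenvalue of the product crosses $1$ \dots\ while the strong eigenvalue stays $<1$'' has the roles reversed: it is the \emph{largest} eigenvalue that must reach $1$ at $t=1$ while the \emph{smallest} stays below $1$ for all $t$.)
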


Let us see that there are large class of diffeomorphisms satisfying
the hypotheses of Theorem \ref{t.generic}.
To explain it, let us briefly review the notion of robust heterodimensional cycles.
We say that two hyperbolic basic sets $K$ and $L$ of 
a diffeomorphisms $f$ form a 
\emph{$C^1$-robust heterodimensional cycle} if 
\begin{itemize}
 \item the stable-indices of  $K$ and $L$ are different,
 \item for any $g$ sufficiently $C^1$-close to $f$ the continuations $K_g$and $L_g$ of
 $K$ and $L$ satisfies that 
 $$W^u(K_g)\cap W^s(L_g)\neq \emptyset\mbox{ and }W^s(K_g)\cap W^g(L_g)\neq \emptyset.$$
\end{itemize}
Robust heterodimensioal cycles are 
very important mechanisms for the study of robustly non-hyperbolic behaviors
of diffeomorphisms, as they are the mechanisms  that 
account for the birth of robust non-hyperbolicity
in the large class of $C^1$ non-hyperbolic diffeomorphisms.
Indeed, up to now, all the known examples of robustly non-hyperbolic 
behaviors are ascribed to robust heterodimensional cycles, 
and it is conjectured by the first author in \cite{B} 
that every robustly non-hyperbolic diffeomorphisms
could be approximated by one which has 
a robust heterodimensional cycles.
Furthermore, 
it is worth mentioning that the creation of robust heterodimensional cycle 
is a quite general phenomenon from the bifurcation of heterodimensional cycles 
between saddles of different indices (see \cite{BD2} and \cite{BDK}).

Then, let us consider diffeomorphisms satisfying following conditions:
\begin{itemize}
\item It has a robust heterodimensional cycle between two hyperbolic 
basic sets $K$ and $L$.
\item $K$ has stable index one and $L$ has stable index two.
\item $L$ has a periodic point with complex eigenvalues to the stable directions.
\end{itemize}
The set of such diffeomorphisms forms, by definition, an open set in 
$\mathrm{Diff}^{1}(M)$ and non-empty if $\dim M \geq 3$.
Then, every diffeomorphism contained in the intersection of this 
open set and the residual set in Theorem \ref{t.generic} serves as 
an example (for the chain recurrence set take the one that contains $K$ and $L$).

To observe the largeness of the class of diffeomorphisms which 
are within the range of hypotheses of Theorem \ref{t.flexible}, 
let us briefly discuss the relationship between the hypotheses of Theorem \ref{t.flexible} 
and the notion of \emph{homoclinic tangencies}.
The hypotheses of Theorem \ref{t.flexible} requires the existence 
of periodic point which has complex eigenvalues to the stable direction.
This implies the indecomposability of the stable directions. 
The other condition guarantees that the stable direction is not uniformly 
contracting. By the work of Gourmelon \cite{Goutan} and Wen \cite{W},
these condition is equivalent to that this diffeomorphism can be 
approximated by the one with homoclinic tangency of stable index one. 
Remember that, the result \cite{BD2} tells that the class of diffeomorphisms 
which satisfies such conditions are quite large.

The proof of 
Theorem~\ref{t.generic} is a consequence  of Theorem~\ref{t.aboundance} below
combined with the generic property that $C^1$-generically a homoclinic class
coincides with the chain recurrence class which contains it (see \cite{BC})
and the coexistence of the periodic point of different indices implies the 
existence of robust heterodimensional cycles \cite{BD2}. 
To state Theorem~\ref{t.aboundance}, we prepare some definitions.
Given a periodic point $p$ and a neighborhood $U$ of $p$, 
the \emph{relative homoclinic class} 
$H(p,U)$ of $p$ in $U$ is the closure of the set of transverse 
homoclinic points whose whole orbit is  contained in $U$. 
A periodic point $q$ is \emph{homoclinically related with $p$ in $U$} if there are 
transverse intersection orbits which is contained in $U$ 
and going from $p$ to $q$ and from $q$ to $p$.

Now, Theorem~\ref{t.generic} is a consequence of the following ''local" result:

\begin{theo}\label{t.aboundance} Let $f$ be a diffeomorphisms 
of a compact manifold. 
Suppose that $f$ admits a hyperbolic periodic point $p$
and an open neighborhood $U$ of the orbit  $\cO(p)$  with the following properties: 
\begin{itemize}
\item $p$ has stable index equal to $2$. 
%  \item the maximal invariant set $\La$ of $f$ in the closure $\overline{U}$ 
%  admits a partially hyperbolic splitting $E^{cs}\oplus_{<} E^u$ so that $E^u$ 
%  is uniformly expanded and dominates $E^{cs}$, and $dim E^{cs}=2$
 \item there is periodic point $p_1$ homoclinically related with $p$ in $U$, 
 so that the $p_1$ has a complex (non-real) contracting eigenvalues.
 \item there is a periodic point $q$ with $\cO(q)\subset U$ with stable index $1$.
 \item there are hyperbolic transitive  basic sets $K\subset H(p,U)$ and $L\subset H(q,U)$ 
 containing $p$ and $q$, respectively, so that $K,L$ form a $C^1$-robust 
 heterodimensioanl cycle in $U$. 
\end{itemize}

Then, for any $\varepsilon>0$ there are arbitrarily small $C^1$-perturbations $g$ of $f$ having a 
$\varepsilon$-flexible point homoclinically related with $p_g$ in $U$ and which orbit is $\varepsilon$-dense
in the relative homoclinic class $H(p_g,U)$. 
\end{theo}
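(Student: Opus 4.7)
The plan is to construct, via an arbitrarily small $C^1$-perturbation $g$ of $f$, a periodic orbit $\gamma_g$ of $g$ that is homoclinically related to $p_g$ in $U$, is $\varepsilon$-dense in $H(p_g,U)$, and whose derivative cocycle is $\varepsilon$-flexible in the sense of Definition~\ref{d.flexible}. The strategy has two main ingredients: a shadowing/connecting construction producing a long periodic orbit that visits prescribed pieces of the dynamics, and two classical cocycle perturbation techniques used to realize the two endpoints of the flexibility path.

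First I would build a pseudo-orbit inside $U$ which (i) is $\varepsilon$-dense in $H(p,U)$, by concatenating shadowing pieces along a finite $\varepsilon$-net of periodic points of $H(p,U)$ homoclinically related to $p$, (ii) contains a very long segment shadowing the orbit of $p_1$, exploiting its complex contracting eigenvalues, and (iii) contains a short segment passing through a neighborhood of $q$, inserted using the robust heterodimensional cycle between $K$ and $L$ and the transitivity of $K$, $L$. Hayashi's connecting lemma (as used in \cite{BC}) promotes this pseudo-orbit to an actual periodic orbit $\gamma_g$ of a $C^1$-perturbation $g$ of $f$ that keeps $\gamma_g$ contained in $U$ and homoclinically related to $p_g$.

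Next I would verify that the derivative cocycle $\cA$ along $\gamma_g$ is $\varepsilon$-flexible. The two endpoints of the required one-parameter family $\cA_t$ are constructed using the two distinguished segments of $\gamma_g$. Along the long segment near the orbit of $p_1$, the Bonatti--D\'iaz--Pujals-type arguments of \cite{BDP, BGV, BB} give arbitrarily small perturbations of the derivative cocycle that, by iterated small rotations of the stable planes, realize any prescribed rotation of the stable eigenspaces of the total product; choosing this rotation to collapse the eigenline decomposition yields the homothety $\cA_{-1}$. Along the short segment near $q$ (of stable index $1$), the cocycle already carries a direction that is expanded by the first return; a small perturbation slightly tuning this expansion increases the weaker contracting eigenvalue of the total product continuously from its original value in $(0,1)$ through to $1$, yielding $\cA_{+1}$. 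The intermediate family is then obtained by continuously interpolating the two local perturbations, both scaled by the parameter; since both perturbations are localized to disjoint arcs of $\gamma_g$, the diameter of $\cA_t$ is controlled by the larger of the two.

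The hard part will be threading the one-parameter family $\cA_t$ through the open region of cocycles having two distinct positive real contracting eigenvalues with $\max_t \lambda_t<1$, while simultaneously keeping the diameter below $\varepsilon$ and meeting the $\varepsilon$-density requirement. The two eigenvalues of the intermediate products must not collide into a complex pair along the path, and the stronger one must never reach $1$. The robust cycle and the flexibility of the pseudo-orbit construction provide the necessary slack: one can insert \emph{arbitrarily many} iterates near the hyperbolic saddles of $K$ (including $p$ with its real contracting spectrum) on either side of the rotation segment near $p_1$ and of the single excursion near $q$, which dilutes each infinitesimal perturbation and keeps the dominant Lyapunov exponent of intermediate products uniformly bounded away from zero. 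Coordinating this bookkeeping so that the dense shadowing pieces along the other periodic orbits of $H(p,U)$ do not interfere with the carefully placed rotation and expansion segments is the delicate heart of the argument.
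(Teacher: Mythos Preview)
Your mechanism for reaching the endpoint $\cA_{+1}$ has a genuine gap. You propose that a small perturbation localized on a \emph{short} segment near $q$ can raise the weaker contracting eigenvalue of the full product from its unperturbed value up to $1$. But an $\varepsilon$-small perturbation of the cocycle on a segment of length $T_q$ alters the product over that segment by at most a multiplicative factor of order $(1+C\varepsilon)^{T_q}$; if $T_q$ is short this factor is close to $1$, while the rest of the long orbit lies near index-$2$ points where both stable directions contract, so the weak eigenvalue of $\cA_0$ is bounded well away from $1$. A short-range small perturbation cannot bridge that gap. Your later remedy of inserting many iterates near saddles of $K$ to dilute perturbations only aggravates this: those extra iterates contract both stable directions and drive the weak eigenvalue of $\cA_0$ further from $1$.

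The paper avoids this by separating the construction into two stages. First (Proposition~\ref{p.weak}, via the arguments of \cite{ABCDW}) the robust heterodimensional cycle is used to produce a periodic point $p_2$ homoclinically related to $p$ whose weak stable eigenvalue already lies in $[1-\nu,1)$ and whose orbit is $\nu$-dense in $H(p,U)$; this requires a \emph{calibrated}, not short, proportion of time near $q$. Second (Proposition~\ref{p.weak-flexible}), working inside the hyperbolic basic set containing $p_2$ and $p_1$, the transition formalism of \cite{BDP}---shadowing within a basic set, which gives exact control of the derivative cocycle, rather than the connecting lemma---produces the flexible orbit $x_L$. Since the weak eigenvalue of $x_L$ is inherited from $p_2$ and hence already close to $1$, the path $\cA_t$ for $t\in[0,1]$ is simply multiplication of the entire cocycle by a homothety of ratio in $[1,(1-\nu)^{-2}]$, which is an $\varepsilon/2$-perturbation once $\nu$ is small. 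The homothety endpoint $\cA_{-1}$ is then obtained by an explicit rotation identity (Lemma~\ref{l.homoper}) after replacing $p_1$ by a point with homothetic stable derivative (Lemma~\ref{l.homoth}) and cancelling the transition matrices (Lemma~\ref{l.annihi}). Your outline has the right flavour on the homothety side, but it is missing the crucial preliminary step of first manufacturing a weak eigenvalue close to $1$.
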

  
We give the proof of Theorem \ref{t.aboundance}
(thus also Theorem \ref{t.generic}) in Section \ref{s.abunda}.

\subsection{Possible dynamical consequences and generalizations}

The notion of flexible points and its abundance
are interesting in themselves.
At the same time, we think that it would be a powerful tool for the 
study of $C^1$-generic systems in many ways. 
Let us explain some idea with some details. 

The first possible application is
for the investigation of
\emph{tame}/\emph{wild} properties in $\mathrm{Diff}^1(M)$ 
(see \cite{B}). 
In a further work, the authors will use them as a 
mechanism for producing new example of wild diffeomorphisms, that is, 
$C^1$-generic diffeomorphsims with infinitely many chain recurrence classes. 
The idea is very simple: if $p$ is a flexible point of stable index $2$, 
one can transform $p$ to be a stable index $1$ periodic point whose 
stable manifold is an 
arbitrarily chosen curve in the old $2$-dimensional stable manifold.  
If we can choose this curve being disjoint from the initial 
chain recurrence class, then it implies that the point has been ejected 
from the class.  Repeating this procedure, we obtain 
infinitely many saddles with trivial homoclinic classes in 
a neighborhood of any classes satisfying hypothesis of Theorem~\ref{t.generic}. 

However, this strategy is not complete:
there are $C^1$-robustly transitive diffeomorphisms 
which satisfies the hypothesis of Theorem~\ref{t.generic} 
(see for example \cite{BV}).  
They have plenty of flexible points, 
but cannot be expelled to outside the class! 
The reason is that, since the class being the whole manifold, 
there is no space to escape from the original classes.
Thus, to execute this strategy completely, we need to
investigate the \emph{topology of the chain recurrence class}
looked inside the center stable directions, 
which will be one of the central topic in \cite{BS}.

We suggest another possible direction of application.
The control of the position of the stable manifold may open the 
way to study the difference between $C^1$-generic diffeomorphisms 
and the $C^1$-open diffeomorphisms. More precisely, for example,  
it is known that for $C^1$-generic robustly transitive diffeomorphisms, 
the homoclinic class of every hyperbolic periodic points coincides with 
the whole manifold (this is a consequence of Hayashi's connecting 
lemma). 
One interesting question is to see if this property is an open property. 
A priori, there is no reason that it is. Meanwhile, 
to establish a rigorous conclusion is not such a simple matter. 
For that, what we need to understand is the position of homoclinic intersections. 
Under the situation where we have abundance of flexible points,
obtaining the better understanding of the position of homoclinic intersections
sounds quite feasible,
since the perturbation technique 
of Theorem \ref{t.flexible} provides us with a 
strong freedom for the control of the (un)stable manifolds.

In this paper, we define the notion of flexible points of stable index $2$ .  
We can define similar notions for higher stable f cases, for example, 
as points whose derivative to stable direction can be perturbed 
both to a contracting homothety 
and to a saddle having at least one eigenvalue equal to $1$.  
It would be interesting to establish similar perturbation techniques
to choose control the position of stable manifolds for them, 
and  to study possible topology of flags of 
strong stable manifolds in the orbit spaces.
However, creating a deformation of a linear cocycle in higher dimension 
is much more difficult and technical than in dimension $2$.
Therefore, we think it is better to restrict our attention 
to two dimensional situation, and 
leave the generalization to higher dimensional cases 
by the time when we will be well prepared for such ambitious researches. 

\quad

{\bf Acknowledgements} \quad 
During preparing the manuscript, 
the authors were supported ANR 08 BLAN-0264-01 DynNonHyp (postdoctoral grant) and CNPq (Brazil).
The authors thank Sylvain Crovisier, Nicolas Gourmelon, 
Rafael Potrie and Dawei Yang for having discussions.

% !TEX root =  flexible_01.tex

\section{The flexibility and the control of the stable manifold}

The aim of section 2, 3 and 4 is the proof of Theorem~\ref{t.flexible}.
 It consists of noticing that the flexibility property 
which allows us to perform an $\varepsilon$-perturbation of 
the flexible linear cocycle, among the cocycle 
 of diffeomorphisms, which inserts a region 
where the dynamics in the period is a homothety.  
Furthermore, the number of fundamental domains 
in the homothethic region may be chosen arbitrarily. 
 In some sense, we ask the orbit to lose an arbitrarily large 
time in that homothetic region. 
 For this reason we call them \emph{retardable cocycle}. 
 
 Iterating a homothetic region does not introduce any distortion: 
as we will see, 
 it is therefore easy to control the effect of perturbations 
performed inside the homothetic region. 
 The fact that we may use an arbitrary number of 
fundamental domains leaves 
 us the time to deform slowly the strong unstable manifold 
to the a priori chosen curve.

 \subsection{Retardable cocycle} 
 
To state the meaning of ``inserting a lot of homothetic regions"
we first define the notion of retardable cocycles. 
Let us give the notion of diffeomorphism cocycles over a finite orbit.
We consider cocycles of diffeomorphisms on $\RR^2\times\ZZ/n\ZZ$, 
for which $\{ {\bf 0}_i \}$ 
(where we put ${\bf 0}_i := (0,i)$) is a periodic sink 
attracting all the points. More precisely,  
a \emph{diffeomorphism cocycle} is a set of diffeomorphisms
$\mathcal{F} =\{f_i \mid \mathbb{R}^2 \times 
\{ i\} \to \mathbb{R}^2 \times \{ i+1\} \}$.
We denote the map induced on the total space $\RR^2\times\ZZ/n\ZZ$
from the cocycle $\mathcal{F} =\{f_i\}$ also by $\mathcal{F}$.

In this article, we assume that every diffeomorphism cocycle fixes 
the origin, that is, we always assume
$f_i({\bf 0}_i) ={\bf 0}_{i+1}$ for every $i$.
Such cocycle is called \emph{contracting} if the $0$-section is 
an attracting periodic orbit and if any orbits is contained in its basin. 
Given a linear cocycle $\mathcal{A} = \{A_i\}$, we consider it 
as a diffeomorphism cocycle in the obvious way,
that is, we consider $A_i$ to be the diffeomorphism 
from $\mathbb{R}^2 \times \{ i\}$ to $\mathbb{R}^2 \times \{ i+1\}$.
For a diffeomorphism cocycle $\mathcal{F} = \{f_i\}$, 
we denote the first return map of it on $\mathbb{R}^2$ 
by $F$ (drop the subscript in $\ZZ/n\ZZ$ and capitalize the 
symbol.) Note that a linear cocycle $\mathcal{A} = \{A_i\}$
is contracting if and only if all eigenvalues of $A$ have
absolute value strictly less than one.

In the following, we denote
the two dimensional disk of radius $r$
centered at $\boldsymbol{0}_i$
by $B_i(r)\subset \RR^2\times\{i\}$
and for any $0<r<s$ we denote 
by $\Ga_{r,s}$ the round closed annulus in 
$\mathbb{R}^2 \times \{ 0\}$
bounded by the circles of 
radii $r$ and $s$, that is, $\Ga_{r,s} := 
\overline{B_0(s) \setminus B_0(r)}$.

\begin{defi}
A contracting cocycle of diffeomorphisms  
$\mathcal{F} =\{f_{i} \}$ 
over $\RR^2\times\ZZ/n\ZZ$
is called \emph{retardable cocycle} 
if it satisfies the following conditions:

There exists $R_1, R_2, R_3$ satisfying
$0< R_1 < R_2 <R_3 $ such that 
\begin{itemize}
\item  $f_i|_{B_i(R_1) \setminus B_i(R_3)} = A_i$, where $A_i$
is a linear map such that 
$A =\prod_{j =0}^{n-1} A_j = \lambda \mathrm{Id}$ where
$0 < \lambda <1$. In other words, $A$ is a contracting homothety.
\item For every $x\in B_0(R_2)$ and $i$ satisfying $0 \leq i < \pi$,
we have  $(\prod_{j=0}^{i-1} f_j)(x) \in B_{i}(R_1)$.
\item $A(B_0(R_2))$ contains $B_0(R_3)$ in its interior.
We call $\Gamma_{\lambda R_2 , R_2}$  
\emph{homothetic region} of $\mathcal{F}$.
\end{itemize}
\end{defi}

The main property of retardable cocycles is that one may insert arbitrarily many fundamental domains 
of homothety as follows: 

\begin{prop}

Let $\mathcal{F} =\{f_i\}_{i\in\ZZ/n\ZZ}$ be a retardable cocycle. 
We define new cocycles 
$\mathcal{F}_m =
\{f_{i, m} \mid \RR^2\times\{i\}\to \RR^2\times\{i+1\} \}$, 
$(m \geq 0)$ as follows:
\begin{itemize}
\item For $x \in \RR^2\times\{i\} $ with $|x| >R_3 $, 
$f_{i, m}(x) := f_{i}(x)$.
\item For $x \in \RR^2\times\{i\} $ with $\lambda^{m} R_3< |x| < R_3$, 
$f_{i, m}(x) := A_i(x)$.
\item For $x \in \RR^2\times\{i\} $ with $|x| \leq \lambda^{m} R_3$, 
$f_{i, m}(x) :=  (A^{m} \circ f_{i} \circ A^{-m})(x)$.
\end{itemize}

Then those maps define a $C^1$-diffeomorphisms contracting  cocycle.
We call $\{f_{i, m}\}$ the \emph{$m$-retard} of $\{ f_i \}$.
\end{prop}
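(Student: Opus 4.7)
The plan is to verify three things in succession: that each $f_{i,m}$ is a well-defined $C^1$ map of $\RR^2$, that it is in fact a diffeomorphism, and that the resulting cocycle $\mathcal{F}_m$ is contracting. The key algebraic input throughout is that $A=\lambda\mathrm{Id}$ is a \emph{scalar} matrix, hence commutes with every $A_i$; this is precisely what makes the three pieces fit together smoothly.

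First I would check continuity and $C^1$ agreement along the two boundary circles. At $|x|=R_3$, the retardable-cocycle assumption gives $f_i=A_i$ on the annulus $\Ga_{R_1,R_3}$ (reading the condition as $f_i|_{B_i(R_3)\setminus B_i(R_1)}=A_i$), so the first and second pieces literally coincide on a neighborhood of the circle $|x|=R_3$ from inside, and matching is automatic. At $|x|=\lambda^m R_3$, the key computation is that for $x$ in a small annular neighborhood of this circle, $|A^{-m}x|=\lambda^{-m}|x|$ lies in the annulus $\Ga_{R_1,R_3}$, so $f_i(A^{-m}x)=A_iA^{-m}x$, and hence
\[
A^m\circ f_i\circ A^{-m}(x)=A^m A_i A^{-m}x=A_i x,
\]
the last equality using that $A^m=\lambda^m\mathrm{Id}$ is central. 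So the second and third pieces coincide as maps on a neighborhood of the circle, and $C^1$-matching is immediate.

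Next, the diffeomorphism property: each piece is manifestly a $C^1$ diffeomorphism onto its image (the first is $f_i$ restricted to $\{|x|>R_3\}$, the second is the linear map $A_i$ restricted to the annulus $\Ga_{\lambda^m R_3,R_3}$, the third is $A^m\circ f_i\circ A^{-m}$ on $B_i(\lambda^m R_3)$, a smooth conjugate of a diffeomorphism). Their images tile $\RR^2\times\{i+1\}$ disjointly because, after passing through the annular region where everything reduces to the linear map $A_i$, the three source regions map to three nested regions separated by the circles $A_i(\{|x|=R_3\})$ and $A_i(\{|x|=\lambda^m R_3\})$. The piecewise inverse, constructed in the same fashion, provides the global $C^1$-inverse.

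Finally, to show $\mathcal{F}_m$ is contracting, I trace an arbitrary orbit. Outside $B_0(R_3)$, the cocycle $\mathcal{F}_m$ coincides with $\mathcal{F}$, which is contracting, so after finitely many periods the orbit enters $B_0(R_3)$. In the annulus $\Ga_{\lambda^m R_3,R_3}$ the first return map is the homothety $A$ of ratio $\lambda<1$, so the orbit is pushed into $B_0(\lambda^m R_3)$ in finitely many periods. Inside this disk, $F_m=A^m\circ F\circ A^{-m}$, and the conjugating map $A^{-m}$ sends $B_0(\lambda^m R_3)$ onto $B_0(R_3)$ where $F$ is contracting toward $\boldsymbol{0}$; by conjugacy, orbits of $F_m$ inside $B_0(\lambda^m R_3)$ converge to $\boldsymbol{0}$ as well.

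The step that demands the most care is the $C^1$-matching at $|x|=\lambda^m R_3$, but as noted above it reduces to the centrality of the scalar $A=\lambda\mathrm{Id}$; if $A$ had two distinct eigenvalues the conjugation $A^m A_i A^{-m}$ would no longer equal $A_i$ and the pieces would fail to fit. This is the structural reason why the definition of a retardable cocycle requires the return map on the inserted annulus to be an actual homothety, not merely a contraction.
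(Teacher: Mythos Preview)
The paper omits the proof entirely, declaring it ``obvious''; your argument supplies exactly the routine verifications one would expect, and the key observation you single out---that $A=\lambda\mathrm{Id}$ is central, so $A^mA_iA^{-m}=A_i$---is precisely what makes the gluing work. One small imprecision: in the contracting step you assert that the first return map $F_m$ is the homothety $A$ on all of $\Gamma_{\lambda^m R_3,R_3}$, but this requires the partial orbit $A_{i-1}\cdots A_0(x)$ to remain in the linear region of each fiber, which is only guaranteed (by the second retardable condition, modulo the paper's typos) for $x$ in the smaller homothetic annulus based at $R_2$; compare Remark~\ref{r.retarded}, where the homothetic region of $\mathcal{F}_m$ is $\Gamma_{\lambda^{m+1}R_2,R_2}$. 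This does not damage the argument---an $\mathcal F_m$-orbit that has entered $B_0(R_3)$ either reaches $B_0(R_2)$ under the outer dynamics (which agree with $\mathcal F$) or lands directly in the inner disk, and from $B_0(R_2)$ your homothety step applies verbatim---but it is worth stating with the correct radius.
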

%$B_i(R_1)$ a \emph{homothetic ball} and
%$\tilde{\Gamma}_m := \overline{B_{0}(R_2) \setminus A^m(B_0(R_2))}$ \emph{homothetic region}
%of $(f_{i, m})$. Furthermore, $\Gamma_n := \overline{A^{n-1}(B_{0}(R_2)) \setminus A^n(B_0(R_2))}$ 
%is called \emph{$n$-th homothethic region} and $\Gamma_m$
%is called \emph{last homothetic region}. 

The proof of above proposition is obvious, so we omit it.

Roughly speaking, $\{f_{i, m}\}$ is a cocycle that 
is obtained by \emph{insertion of $m$-homothetic fundamental domains} to $\{f_i\}$.
This insertion does not change the main dynamical 
properties of the cocyle: 
the orbits will just spend  
more time in the newly added homothetic region.
More precisely, on the homothetic region, 
the relative position of 
objects such as the strong stable manifold 
are kept intact under the iteration.
For example, we have the following properties of retarded cocycles:
\begin{rema}\label{r.retarded}
\begin{itemize}
\item $\{f_{i, 0}\} = \{f_i\}$.

\item Let $\Gamma_{\lambda R, R}$ be the homothetic 
region of $\mathcal{F}$. Then, 
$F|_{\Gamma_{\lambda^{m+1}R, R}}$ is a homothety 
of rate of contraction $\lambda$.
We call $\Gamma_{\lambda^{m+1}R, R}$
\emph{homothetic region} of $\mathcal{F}_m$.

\item Suppose the origin $\{ {\bf 0}_i \} $ 
has strong stable manifold
$W^{ss}(\boldsymbol{0}_0, \mathcal{F})$ of $\{ f_i\}$. 
Then, we have 
\[
W^{ss}(\boldsymbol{0}_0, \mathcal{F}_m) \cap \Gamma_{\lambda^{l+1}R, \lambda^{l}R} 
= A^{l}(W^{ss}(\boldsymbol{0}_0, \mathcal{F}) \cap 
\Gamma_{\lambda R, R} )
\]
for $l$ satisfying $0 \leq l \leq m$.

\item The item above can be stated more sophisticated way 
by using the language of orbit spaces.
Note that for every $m \geq 0$, 
$\mathcal{F}_m = \{ f_{i, m}\}$ coincides with $ \mathcal{F} = \{f_i\}$
outside some compact neighborhood of the origin. Thus 
the standard conjugacy gives the natural identification 
between $T^{\infty}_{\mathcal{F}_m}$ and $T^{\infty}_{\mathcal{F}_m}$.
Then, the above item is paraphrased that 
$W^{ss}(\boldsymbol{0}_0, \mathcal{F}_m)$ and 
$W^{ss}(\boldsymbol{0}_0, \mathcal{F})$ project to the 
same curve in $T^{\infty}_{\mathcal{F}_m}= T^{\infty}_{\mathcal{F}}$.

\end{itemize}
\end{rema}

Furthermore, under some special circumstance, 
the operation of retarding does not change the dynamics so much. 
To explain that, we introduce the notion of distance 
on diffeomorphism cocycles. 
Let $\{ f_i \}$, $\{g_i\}$ be two diffeomorphism cocyle 
on $\mathbb{R}^2 \times \mathbb{Z} / n \mathbb{Z}$.
We say that $\{ g_i \}$ is a \emph{perturbation of $\{ f_i \}$}
if the support, that is, 
the set $\{ x \in \mathbb{R}^2 \times \{ i \}  \mid f_i(x) \neq g_i(x) \}$
is relatively compact for all $i$.
Suppose  $\{g_i\}$ is a perturbation of $\{ f_i \}$ and $\varepsilon > 0$. 
Then $\{g_i\}$ is called $\varepsilon$-perturbation of $\{f_i\}$
if 
\[
\max_{i \in  \mathbb{Z} / n \mathbb{Z},  x \in \mathbb{R}^2 \times \{ i \}} \| Df_i(x) - Dg_i(x) \|  < \varepsilon.
\]
For the notion of $\varepsilon$-perturbation of 
diffeomorphism cocycle, see Remark \ref{r.ondist} below.
Then, for the retarded cocycle, we have the following.
\begin{lemm}\label{l.retper}
Suppose $\{ A_i \}$ is a linear
cocycle and $\{ f_i \}$ is a retardable diffeomorphism cocycle
which is also an $\varepsilon$-perturbation
of $\{A_i\}$ such that the support of the perturbation 
and the homothetic region is contained 
in a neighborhood $\mathcal{N}$ of $\{ {\bf 0}_i \} $.
Then $\{f_{i, m}\}$ is also a $\varepsilon$-perturbation
of $\{A_i\}$ whose support is contained in $\mathcal{N}$.
\end{lemm}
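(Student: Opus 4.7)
The plan is a direct verification of the two conditions defining an $\varepsilon$-perturbation with support in $\mathcal{N}$, namely the pointwise bound $\|Df_{i,m}(x) - A_i\| < \varepsilon$ and the containment of $\mathrm{supp}(f_{i,m}-A_i)$ in $\mathcal{N}$, carried out by examining separately the three regions on which $f_{i,m}$ is defined in the definition of the $m$-retard.

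The central observation is that because $A = \lambda\,\mathrm{Id}$ is a homothety, both $A^{m}$ and $A^{-m}$ act simply as multiplication by the scalar $\lambda^{\pm m}$, which commutes with every linear map. The inner-disk formula therefore reduces to $f_{i,m}(x) = \lambda^m f_i(\lambda^{-m}x)$, and the chain rule yields
\[
Df_{i,m}(x) \;=\; \lambda^m \cdot Df_i(\lambda^{-m}x) \cdot \lambda^{-m} \;=\; Df_i(\lambda^{-m}x).
\]
Similarly, linearity of $A_i$ gives $A_i(x) = \lambda^m A_i(\lambda^{-m}x)$, so
\[
f_{i,m}(x) - A_i(x) \;=\; \lambda^m\bigl[f_i(\lambda^{-m}x) - A_i(\lambda^{-m}x)\bigr].
\]
Thus, in a precise sense, $f_{i,m}$ on the inner disk is a $\lambda^m$-rescaled copy of $f_i$ on $B_i(R_3)$, and the scaling factors cancel exactly in the derivative.

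The derivative bound then follows region by region. On $|x|>R_3$ we have $f_{i,m}=f_i$, and the estimate is inherited from the hypothesis on $\{f_i\}$. On the intermediate annulus $\lambda^m R_3 < |x| < R_3$ we have $f_{i,m}=A_i$, so the difference is zero. On the inner disk, the computation above gives $\|Df_{i,m}(x)-A_i\| = \|Df_i(\lambda^{-m}x)-A_i\| < \varepsilon$, since $\lambda^{-m}x$ ranges over $B_i(R_3)$.

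For the support, the outer-region contribution is contained in $\mathrm{supp}(f_i-A_i)\subset\mathcal{N}$, the intermediate annulus contributes nothing since $f_{i,m}=A_i$ there, and the inner-disk contribution is the rescaled set $\lambda^m\cdot(\mathrm{supp}(f_i-A_i)\cap B_i(R_3))$. The main point to check, and the only step that requires anything beyond unwinding definitions, is that this rescaled set still lies in $\mathcal{N}$. This follows once one arranges that $\mathcal{N}$ is star-shaped with respect to the zero section, a reduction one may always make by replacing $\mathcal{N}$ with a union of sufficiently small round balls around the points $\mathbf{0}_i$ that still contain both the homothetic region and the support of the perturbation. Under this reduction, the factor $\lambda^m < 1$ keeps the rescaled support strictly inside $\mathcal{N}$, completing the verification.
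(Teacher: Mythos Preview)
The paper does not supply a proof of this lemma; it is stated as a routine observation and used later without further comment. Your computation is exactly what the authors have in mind: the crucial point is that $A^m=\lambda^m\,\mathrm{Id}$ is scalar, so conjugation by it leaves derivatives unchanged, and hence the $\varepsilon$-bound on $\|Df_{i,m}-A_i\|$ is inherited verbatim from the bound on $\|Df_i-A_i\|$ in each of the three regions.

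One small caveat concerns your support argument. The reduction ``replace $\mathcal{N}$ by round balls that still contain the support and the homothetic region'' is not automatic for an \emph{arbitrary} neighborhood $\mathcal{N}$: such balls must have radius at least $R_2$, and nothing in the hypothesis forces $B_i(R_2)\subset\mathcal{N}$. What actually makes the support claim go through is that the inner-disk contribution lies in $B_i(\lambda^m R_3)$, which for $m\ge 1$ sits inside the ball $B_i(\lambda R_3)$; in every application in the paper (notably Proposition~\ref{p.flexible-retardable}, where $\mathcal{N}$ is the unit ball) the neighborhood is a round ball and hence star-shaped, so the issue is moot. The lemma as stated is slightly informal on this point, and your flagging of it is appropriate; the derivative estimate, which is the real content, is fully correct.
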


\begin{rema}\label{r.ondist}
The notion of $\varepsilon$-perturbation gives a concept 
of closeness between a cocycle and its perturbation. 
A priori, this is different from the notion of usual $C^1$-distance,
since our one does not take the contribution of $C^0$-distance 
into consideration.  
However, this difference is negligible for the following reason:
In the following, we establish a perturbation 
technique which provides us an $\varepsilon$-perturbation with 
very small (indeed, arbitrarily small) support. 
This smallness of support combined with the smallness 
of the $\varepsilon$ implies the smallness of $C^{0}$-distance
and supplements the lack of it in the practical use.
\end{rema}

By inserting a lot of homothetic regions, combining the fragmentation
lemma, we will see that we can obtain a considerably 
large freedom to change the relative position of the objects.  

\subsection{Proof of Theorem~\ref{t.flexible}}

The aim of this section is to prove Theorem~\ref{t.flexible} as a  consequence of the 
following propositions.

The first proposition makes the relation between flexible and retardable cocycles: retardable 
cycles may be obtained as small perturbations of flexible cocyles.

\begin{prop}\label{p.flexible-retardable} 
Let  $\varepsilon>0$ and 
$\cA=\{A_i\}_{i\in\ZZ /n\ZZ }$, $A_i\in \mathrm{GL}(2,\RR)$ 
be an $\varepsilon$-flexible linear cocycle over a periodic orbit 
of period $n >0$. 

Then there is a contracting retardable diffeomorphisms cocycle $\mathcal{F}=\{f_i\}$ with the following properties:
\begin{itemize}
 \item For any $m\in \NN$, the retarded cocycle 
$\mathcal{F}_m=\{f_{m,i}\}$ is an $\varepsilon$-perturbation of $\cA$.
 \item There is an isotopy of contracting diffeomorphism cocycle
 connecting $\mathcal{F}$ and $\cA$ 
such that for every moment
the periodic orbit $\{ {\bf 0}_i\}$ has two 
different real eigenvalues.
 \item  For every $ i\in\ZZ /n\ZZ$, the map
$f_i$ coincides with $A_i$ out of  the unit balls 
$B_i(1) \subset \RR^2\times \{i\}$. 
 \item The derivative $DF$ at the origin
$\{ {\bf 0}_0 \}$ has  a contracting eigenvalue 
and one eigenvalue equal to $1$.  
\end{itemize}
\end{prop}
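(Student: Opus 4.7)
My plan is to use the flexibility family $\{A_{i,t}\}_{t\in[-1,1]}$ \emph{radially}, building $\cF=\{f_i\}$ so that on each fiber $\RR^2\times\{i\}$ the map $f_i$ coincides with $A_i=A_{i,0}$ outside a disk of radius strictly less than one, with the linear map $A_{i,-1}$ on a large annulus around the origin---so that the product $F$ restricted there is the contracting homothety $A_{-1}$, giving the desired homothetic region---and with a diffeomorphism $\phi_i$ satisfying $D\phi_i(\mathbf{0})=A_{i,1}$ on a very small disk around the origin, which forces $DF(\mathbf{0})=A_1$ and hence yields the required eigenvalue pair $(1,\lambda_1)$. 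Concretely, fix four radii $0<r_0<r_1<r_2<r_3<1$, set $f_i(x)=A_ix$ for $|x|\ge r_3$, $f_i(x)=A_{i,-1}x$ for $r_1\le|x|\le r_2$, $f_i=\phi_i$ for $|x|\le r_0$, and interpolate smoothly on the two transition annuli $\{r_2\le|x|\le r_3\}$ and $\{r_0\le|x|\le r_1\}$.

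\textbf{Gluing with $\varepsilon$-control.}
On a transition annulus I use a radial convex combination $f_i(x)=Bx+\rho(|x|)(C-B)x$ of two adjacent linear maps $B,C$ taken from the triple $(A_i,A_{i,-1},A_{i,1})$, whose derivative satisfies
$$\|Df_i(x)-B\|\le \|C-B\|\,\bigl(\rho(|x|)+|x|\,|\rho'(|x|)|\bigr).$$
The flexibility diameter bound gives $\|C-B\|<\varepsilon$; taking $\rho$ to depend smoothly on $\log|x|$ over a long logarithmic interval (i.e.\ $r_3/r_2$ and $r_1/r_0$ sufficiently large) makes $|x|\,|\rho'(|x|)|$ arbitrarily small, so the derivative error remains below $\varepsilon$. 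Since the modification is supported in $B_i(1)$, this shows $\cF$ is an $\varepsilon$-perturbation of $\cA$ supported in the unit balls; Lemma~\ref{l.retper} then upgrades this to every retard $\cF_m$.

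\textbf{Innermost map and attractivity.}
Because $DF(\mathbf{0})=A_1$ has eigenvalue $1$ in some direction $e$, the origin is not hyperbolic and one must ensure it is still attracting. I take $\phi_i(x)=A_{i,1}x$ plus a higher-order correction, supported in $\{|x|\le r_0\}$, of the form $-c|x|^2 e$ on (say) a single $\phi_i$. This preserves $D\phi_i(\mathbf{0})=A_{i,1}$ and is a diffeomorphism for small $c$, but produces a weak attraction along the neutral direction. Combined with the strong homothetic contraction on the middle annulus---into which every forward orbit eventually falls and is pushed toward $\mathbf{0}$---the basin of $\mathbf{0}$ becomes the entire fiber, making $\cF$ contracting.

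\textbf{Isotopy and main obstacle.}
For the isotopy connecting $\cF$ to $\cA$, parameterize by $s\in[0,1]$, replacing $A_{i,-1}$ by $A_{i,-s}$ on the middle annulus and $A_{i,1}$ by $A_{i,s}$ on the inner disk, and scaling the nonlinear correction to zero as $s$ decreases from $1$ (for $s<1$, $A_s$ already has two distinct contracting eigenvalues by the flexibility hypothesis, so no correction is needed). At $s=1$ we recover $\cF$; at $s=0$ the whole radial modification collapses to the single linear map $A_i$ and we recover $\cA$; at every intermediate $s$ the product at the origin is $A_s$, which by definition of flexibility has two distinct positive real eigenvalues. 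The delicate step is the third paragraph: designing the nonlinear correction so that the origin is a genuine attractor exactly at $s=1$ in spite of the neutral eigenvalue, while simultaneously keeping $\cF$ globally a diffeomorphism, gluing smoothly with the homothetic middle region, and respecting the $\varepsilon$-derivative bound from the second paragraph.
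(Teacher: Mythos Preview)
Your construction has a genuine gap in establishing that $\cF$ (and each member of the isotopy) is a \emph{contracting} cocycle. On the two transition annuli you interpolate linearly between endpoints of the flexibility family, so at radius $r$ the map is close to the linear cocycle $(1-\rho(r))\cB+\rho(r)\cC$ with $\{\cB,\cC\}\subset\{\cA_0,\cA_{-1},\cA_1\}$. But a convex combination of contracting linear cocycles need not be contracting, and the flexibility hypothesis says nothing about the straight-line segments $\cA_0\!\leadsto\!\cA_{-1}$ or $\cA_{-1}\!\leadsto\!\cA_1$ in cocycle space. What flexibility \emph{does} give is the whole path $t\mapsto\cA_t$, which stays inside the open set of contracting cocycles for every $t<1$; by keeping only the endpoints you have thrown away precisely this information. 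On the inner annulus in particular, for $\rho$ near $1$ the product of $(1-\rho)\cA_{-1}+\rho\cA_1$ has spectral radius approaching $1$ from an uncontrolled side, so nothing prevents it from exceeding $1$ for intermediate $\rho$; that would create invariant sets in the annulus and destroy the basin of $\mathbf 0$. Your sentence ``every forward orbit eventually falls into the middle annulus'' is exactly the point at issue, not a consequence of what precedes it. The paper's proof avoids this by setting $f_i(x)=A_{i,\theta(\|x\|)}(x)$ for a slow reparametrization $\theta:[0,\infty)\to[-1,1-\eta]$: at \emph{every} radius the map is a member of the contracting family $\{\cA_t\}_{t\le 1-\eta}$, and Proposition~\ref{p.realization} turns uniform contraction of this compact family into $\|D\cF^k\|<1$ globally. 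Only afterwards is a separate small perturbation near the origin used to push the weak eigenvalue from that of $\cA_{1-\eta}$ up to $1$.

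The same defect recurs in your isotopy (the transition annuli at parameter $s$ interpolate between $\cA_0,\cA_{-s},\cA_s$, again along uncontrolled segments). A secondary point you also need to address is the width of the homothetic annulus: for the return map $F$ to equal the homothety $A_{-1}$ on a full fundamental domain, each intermediate iterate $f_0,\dots,f_{n-1}$ of a point in that domain must remain in the region where $f_i=A_{i,-1}$; this forces $r_2/r_1$ to dominate a constant depending on the norms $\|A_{i,-1}^{\pm1}\|$ (compare the paper's condition $t_3<K^{-n}t_2<t_1K^n<t_0$), which your choice of radii does not yet guarantee.
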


The second proposition explores the effect of perturbations of retardable cocycles 
on the position of the strong stable manifold. 

%Consider any contracting cocyle $F$ having distinc positive real eigenvalues.  
%We have seen that  the orbit space $T_F$  of $F$ on  
%$(\RR^2\setminus\{0\})\times \ZZ/n\ZZ$  is naturally 
%identify with the torus $\TT^2$, endowed with a parallel and 
%meridiens induced by the strong stable manifold. Let us denote $T_F^\infty$ this orbit space.
%
%We consider now contracting cocycles  $G$  which are perturbation of $F$ with compact support. 
%Therefore, $G$ is conjugated to $F$ by a (unique) homeomorphism $h$ which coincides with the 
%identity map in a neighborhood of infinity, and is $C^1$ out of the zero section. Let us call \emph{unitary conjugacy}
%this  conjugating homeomorphisms. The unitary conjugacy leads to an identification 
%of the orbit spaces of $G$ and of $F$.

\begin{prop}\label{p.retardable-manifold}
Let $\mathcal{F} = \{ f_i \}$ be a retardable 
diffeomorphism cocycle
over $\mathbb{R}^2 \times \ZZ/n\ZZ$
whose origin has two distinct real 
positives eigenvalues. 
For any pair of disjoint simple curves $\sigma_1,\sigma_2$  
in $T_{\cF}^\infty$, isotopic to the meridians, 
and for any $\varepsilon_0>0$, there is $N>0$ so that, 
for every $m\geq N$ 
there is a $\varepsilon_0$-perturbation 
$\mathcal{G}$ of the $m$-retarded cocycle $\mathcal{F}_m$
such that the following holds:
\begin{itemize}
 \item $\mathcal{G}$ is a perturbation of $\mathcal{F}_m$ with support in the 
homothetic region.
\item  $\mathcal{G}$ is a contracting cocycle. 
\item The strong stable manifold of the 
origin $\{ {\bf 0}_i \}$
induces $\sigma_1\cup \sigma_2$ on the orbit space 
$T_{\mathcal{G}}^\infty$.
\end{itemize}
\end{prop}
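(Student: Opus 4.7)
The plan is to pass to logarithmic--polar coordinates in the homothetic region and exploit the total length $(m+1)c$ (with $c=-\log\lambda$) of the resulting cylinder to spread a single deformation into $(m+1)$ local pieces, each of $C^1$-size $O(1/m)$. Concretely, using $(\theta,s)=(\arg x,-\log|x|)$, the homothetic annulus $\Gamma_{\lambda^{m+1}R,R}$ becomes the cylinder $C_m=S^1\times[s_0,s_0+(m+1)c]$ on which the first-return map of $\mathcal{F}_m$ is the pure shift $\mathrm{sh}(\theta,s)=(\theta,s+c)$. The orbit-space torus $T^\infty_{\mathcal{F}_m}$ is then the quotient $C_m/\mathrm{sh}$; the two meridians $\gamma_i$ correspond to the two vertical lines $\{\theta=\theta_{\pm}\}$ along which $W^{ss}(\mathcal{F}_m)$ sits.

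Because each $\sigma_i$ is isotopic to $\gamma_i$, I choose a smooth ambient isotopy $(\Phi_t)_{t\in[0,1]}$ of the torus with $\Phi_0=\mathrm{id}$ and $\Phi_1(\gamma_i)=\sigma_i$, and lift it equivariantly to an ambient isotopy $(\tilde\Phi_t)$ of $C_m$. The lifted targets $\tilde\sigma_i:=\tilde\Phi_1(\{\theta=\theta_{\pm}\})$ are $c$-periodic simple curves; by shifting the lift by $2\pi k$ in $\theta$ I arrange $\tilde\sigma_i(s_0)=\tilde\sigma_i(s_0+(m+1)c)=\theta_{\pm}$, so that the future target $W^{ss}$ will match the unperturbed $W^{ss}$ at both ends of the cylinder. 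Next I fragment the isotopy: set $\tilde\phi_l=\tilde\Phi_{(l+1)/(m+1)}\circ\tilde\Phi_{l/(m+1)}^{-1}$ for $l=0,\dots,m$, so that $\tilde\phi_m\circ\cdots\circ\tilde\phi_0=\tilde\Phi_1$ and, by uniform smoothness of $\tilde\Phi_t$, $\|\tilde\phi_l-\mathrm{id}\|_{C^1}=O(1/m)$. Using a smooth $s$-cutoff I replace each $\tilde\phi_l$ by an isotopic $\tilde\psi_l$ supported in the interior of the $l$-th fundamental slab $S_l=S^1\times[s_0+lc,s_0+(l+1)c]$ and arranged so that the composition $\tilde\psi_m\circ\cdots\circ\tilde\psi_0$ still equals $\tilde\Phi_1$; the cutoff inflates the $C^1$-norm only by a factor depending on $c$, so $\|\tilde\psi_l-\mathrm{id}\|_{C^1}=O(1/m)$ remains.

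Transporting the $\tilde\psi_l$'s back to Cartesian coordinates on the pairwise disjoint annuli $\Gamma_{\lambda^{l+1}R,\lambda^lR}$, their concatenation is a single $C^1$-diffeomorphism $\Psi$ of $\RR^2$ that equals the identity outside the homothetic region. Define $\mathcal{G}$ by replacing $f_{0,m}$ with $\Psi\circ f_{0,m}$: it agrees with $\mathcal{F}_m$ outside the homothetic region and $\mathrm{dist}(\mathcal{G},\mathcal{F}_m)=O(1/m)$, so choosing $N$ with $O(1/N)<\varepsilon_0$ makes it an $\varepsilon_0$-perturbation. Because $\Psi$ is supported away from the origin, the linearisation of the return map of $\mathcal{G}$ at $0$ equals that of $\mathcal{F}_m$; in particular $\mathcal{G}$ remains a contracting cocycle with the same strong stable direction. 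Tracing $W^{ss}(\mathcal{G})$ inward from the outer boundary of $C_m$: after crossing slab $l$ the cumulative displacement applied to the old vertical lines is $\tilde\psi_l\circ\cdots\circ\tilde\psi_0=\tilde\Phi_{(l+1)/(m+1)}$, so at the inner boundary the curve coincides with $\tilde\Phi_1(\{\theta=\theta_{\pm}\})=\tilde\sigma_1\cup\tilde\sigma_2$, matching the $\{\theta_{\pm}\}$-lines in the $\theta$-coordinate by our choice of lift. Since this is the unique $\mathcal{G}$-invariant curve tangent to the (unchanged) strong stable direction at the origin, $W^{ss}(\mathcal{G})\cap C_m=\tilde\sigma_1\cup\tilde\sigma_2$, whose projection to $T^\infty_{\mathcal{G}}\simeq T^\infty_{\mathcal{F}_m}$ is exactly $\sigma_1\cup\sigma_2$.

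The hardest part is the compatibility between the fragmentation with cutoff and the actual extension of $W^{ss}(\mathcal{G})$: one has to ensure the cutoff can be carried out in fixed-height slabs while keeping the $C^1$-size at $O(1/m)$ and making the $(m+1)$-fold composition equal $\tilde\Phi_1$ exactly, and one has to verify that this telescoping of slab-localised perturbations genuinely produces $\tilde\sigma_1\cup\tilde\sigma_2$ as the strong stable manifold rather than a merely isotopic representative. The $c$-periodicity of $\tilde\sigma_i$, which is exactly the content of being isotopic to a meridian, is what makes the boundary matching at the inner end of the cylinder automatic and allows the argument to close.
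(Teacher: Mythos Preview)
Your overall strategy --- spread a single deformation across many fundamental domains of the homothetic region so that each local piece is $C^1$-small --- is precisely the idea behind the paper's proof. However, the execution has a genuine gap at the cutoff step, together with a related misconception about the shape of $W^{ss}(\mathcal{G})$ inside the cylinder.

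The sentence ``arranged so that the composition $\tilde\psi_m\circ\cdots\circ\tilde\psi_0$ still equals $\tilde\Phi_1$'' cannot be made true. If each $\tilde\psi_l$ is supported in its own slab $S_l$ and the slabs are pairwise disjoint, then the composition restricted to $S_l$ is just $\tilde\psi_l$; such a product can never equal the $c$-periodic lift $\tilde\Phi_1$. What actually governs the new meridians (this is the content of the paper's Lemma on perturbations in disjoint fundamental domains) is the composition of the \emph{torus projections} $\hat\psi_l$ of the $\tilde\psi_l$. But after your cutoff, $\hat\psi_l$ is no longer the time-step $\phi_l=\Phi_{(l+1)/(m+1)}\circ\Phi_{l/(m+1)}^{-1}$: it is $\phi_l$ damped to the identity near the seam parallel, and there is no reason for these damped pieces to telescope back to $\Phi_1$. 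You flag this as ``the hardest part'' but do not resolve it. A related error is the claim $W^{ss}(\mathcal{G})\cap C_m=\tilde\sigma_1\cup\tilde\sigma_2$: the strong stable manifold is determined near the origin (where it equals the unperturbed lines $\{\theta=\theta_\pm\}$) and is extended \emph{outward} by $G^{-1}$, so it is not $c$-periodic in $C_m$ and cannot equal a periodic lift of $\sigma_i$; the goal is only that its projection to the orbit space be $\sigma_i$. The ``shift by $2\pi k$ in $\theta$'' does nothing since $\theta\in S^1$.

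The paper circumvents these issues by using a \emph{spatial} rather than temporal fragmentation: the fragmentation lemma writes the torus diffeomorphism $\psi$ sending $\gamma_i$ to $\sigma_i$ as a product of a fixed number $k$ of $C^1$-small diffeomorphisms, each supported in a ball small enough to miss some round parallel. Such a piece lifts \emph{without any cutoff} to a diffeomorphism supported in a single round fundamental domain, and conjugating by the homothety places the $k$ lifts in disjoint annuli without altering their $C^1$-size. One then chooses $N$ roughly $3k$ so that the homothetic region is long enough to accommodate them; the perturbation size is controlled by the fragmentation (independent of $m$) rather than by an $O(1/m)$ estimate.
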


Let us give the proof assuming these two perturbation results.

\begin{proof}[Proof of Theorem~\ref{t.flexible} using Propositions~\ref{p.flexible-retardable}
and~\ref{p.retardable-manifold}]

Let $f$ be a $C^1$-diffeomorphism of some surface 
and $D$ be an attracting periodic disc, in the basin of an
$\varepsilon$-flexible  hyperbolic  periodic point $p$ of period $n$. 
Remember that the orbit space in the punctured stable manifold of $p$ is 
a torus $T^{\infty}_f$ endowed with a parallel 
and a meridian (isotopy class of the projection 
of the strong stable manifold of $p$. 

First, we perform a perturbation along the orbit of $p$
so that we can reduce the problem to the linear cocycle case, 
which enables us to use Proposition ~\ref{p.flexible-retardable}
and~\ref{p.retardable-manifold}.
In the following, all the perturbations we give 
are tacitly assumed to be 
supported in sufficiently small neighborhood of the orbit of $p$ 
so that the orbits entering in $D$ can always be 
identified with a point to $T^{\infty}_{f}$ by standard conjugacy. 
We fix a pair $\sigma_1,\sigma_2$ of 
disjoint simple curves isotopic to a meridian.  
Then, by an arbitrarily  $C^1$-small perturbation 
of $f$ supported in an arbitrarily small neighborhood of 
the orbit of $p$, one can obtain a diffeorphism $f_0$ whose 
expression in local charts around the orbit of $p$ is linear, 
and coincides with the differential of $f$ along the orbit of $p$. 
Furthermore, $f_0$ is isotopic to $f$ through cocycles
with the same eignevalues along the orbit of $p$, 
so that the continuous dependance of the strong stable 
manifolds implies that the meridian of $f_0$ in $T^{\infty}_f$ 
are isotopic to the meridian of $f$. 

Therefore, by changing $f$ with $f_0$, 
we can assume that $f$ is linear in a 
neighborhood of the orbit of $p$. 
Let $\cA=\{A_i=Df(f^i(p))\}$ be the corresponding linear cocycle. 
As $f$ is linear near the orbit of $p$ and is a contraction, 
the diffeomorphism $f$ is $C^1$-conjugated to $\cA$  by a 
unique diffeomorphism which  is the identity
map in the small neighborhood of the orbit of $p$. 
Thus  the space of orbits of the punctured linear 
cocycle $T^{\infty}_{\mathcal{A}}$ is canonically identified to $T^{\infty}_f$, 
via this conjugacy. 
Therefore, the circles $\sigma_1$ and $\sigma_2$ of $T^{\infty}_f$ 
induce circles $\alpha_1$ and $\alpha_2$ of $T^{\infty}_{\mathcal{A}}$. 
Now, the problem is translated to the perturbation problem 
of linear cocycles $\mathcal{A}$ over 
$\mathbb{R}^2 \times \mathbb{Z} / n \mathbb{Z}$, that is, 
for proving Theorem \ref{t.flexible}, 
we only need to show that there are $\varepsilon$-perturbations of 
$\cA$ in an arbitrary small neighborhood of the orbit of 
$\{ {\bf 0}_i \}$ 
which satisfies conclusion of the 
theorem for the curves for $\alpha_1$ and $\alpha_2$
in $T^{\infty}_{\mathcal{A}}$. 
Let us construct such perturbation by using 
Proposition~\ref{p.flexible-retardable} and 
\ref{p.retardable-manifold}.

Proposition~\ref{p.flexible-retardable} allows us to 
perform an $\varepsilon$-perturbation of $\cA$ in order 
to get a retardable  cocycle $\mathcal{F}$ which coincides 
with $\cA$ outside the unit ball. 
By conjugating this perturbation by an homothety 
(which does not change the $C^1$-size of the perturbation), 
we can  assume that the support of the perturbation 
given by Proposition~\ref{p.flexible-retardable} 
is contained in an arbitrarily small neighborhood of 
the orbit of the origin. 
Remember that the orbit of the origin for $\mathcal{F}$
and hence for $\mathcal{F}_m$ is a non-hyperbolic 
attracting orbit having exactly one eignevalue equal 
to $1$, as announced in Theorem~\ref{t.flexible}. 
Thus we can talk about the meridians in $T_{\mathcal{F}}^{\infty}$.
Since $\mathcal{F}$ is isotopic to $\cA$ 
through contracting cocycle having distinct real eigenvalues
supported in the small ball, 
we have that the meridians of $\mathcal{F}$ in 
$T^{\infty}_{\mathcal{F}}$ are isotopic 
to the meridians of $\cA$. 
Remark \ref{r.retarded} tells us that the same holds for 
the $m$-retarded cocycles  $\mathcal{F}_m$.

Now we apply Proposition~\ref{p.retardable-manifold}: 
for $m$ large enough, 
${\mathcal{F}}_m$ admits an arbitrarily $C^1$-small perturbation
supported in the homothetic region, so that the strong 
stable manifold of the periodic orbit induces the 
circles $\alpha_1$ and $\alpha_2$ on 
the orbit space $T^{\infty}_A=T^{\infty}_{\mathcal{F}}=T^{\infty}_{\mathcal{F}_m}$. 
\end{proof}

It remains to prove Proositions~\ref{p.flexible-retardable} and \ref{p.retardable-manifold}. In Section \ref{s.pret} we prove
Proositions~\ref{p.flexible-retardable} and 
In Section \ref{s.realiza} we prove \ref{p.retardable-manifold}.

% !TEX root =  flexible_01.tex

\section{Perturbation of retardable cocycles in the homothetic region}\label{s.pret}

In this section we will prove Proposition~\ref{p.retardable-manifold}. The idea, already appeared in 
\cite{BD1} and  
\cite{BCVW}, is to combine two simple ideas: 
\begin{itemize}
 \item The fragmentation lemma,
which asserts that every diffeomorphism 
of a closed manifold isotopic to the identity map 
can be written as a finite product of diffeomorphisms
arbitrarily close to the identity map, 
supported in balls with arbitrarily small balls.
 \item Conjugating a diffeomorphism 
supported in a small ball by a contracting 
 homothety does not change its $C^1$ distance to 
the identity.  Therefore if one considers 
an $\varepsilon$-perturbation of
a retarded cocycle $\mathcal{F}_m$ supported in some 
fundamental domain in the homothetic region, 
 and if we put this perturbation in another 
fundamental domain by conjugating it by a homothety, 
it remains an $\varepsilon$-perturbation.  
\end{itemize}

Note that the second item is one of the main idea of
Franks' Lemma (linearization of local dynamics near 
the periodic point by arbitrarily small perturbation, see \cite{F}), 
which is frequently used 
in the study of $C^1$-generic dynamical systems.

Let us start the proof.
\begin{proof}[Proof of Proposition~\ref{p.retardable-manifold}]
Let $\mathcal{F}=\{f_i\}$ be a retardable contracting cocycle over
$\mathbb{R}^2 \times \ZZ / n \ZZ$, 
and $T^{\infty}_\mathcal{F}$ the space 
of orbit of the punctured cocycle. 
Let $\gamma_1,\gamma_2 \subset T^{\infty}_{\mathcal{F}}$ be the meridian 
induced by the strong stable manifold of $\mathcal{F}$. 
Remember that for any contracting cocycle 
$\mathcal{G}$ which coincides with $\mathcal{F}$, the punctured orbit space $T^{\infty}_{\mathcal{G}}$ is
identified with $T^{\infty}_{\mathcal{F}}$ through standard conjugacy.
In particular, 
according to remark~\ref{r.retarded},  
we have that $W^{ss}({\bf 0}_0, \cF_m)$
(remember that $\cF_m = \{f_{i, m}\}$ is $m$-retarded cocycle
of $\cF$) 
projects to the same meridian in $T^{\infty}_{\cF} = T^{\infty}_{\cF_m}$
for every $m\in \NN$. 
% More precisely, the strong unstable manifolds of 
% all the $\cF_m$ coincide out of any ball containing the 
% homothetic region. 

Let $\sigma_1,\sigma_2\subset T^{\infty}_{\cF}$ be 
two disjoint simple curves  
which are isotopic to meridians. 
We take a diffeomorphism 
$\psi \colon T^{\infty}_{\cF} \to T^{\infty}_{\cF}$ which is isotopic to the identity and 
satisfies $\psi(\gamma_i)=\sigma_i$ for $i =1, 2$. 
Fix some $\varepsilon_0>0$.  
For proving Proposition~\ref{p.retardable-manifold} 
one has to show that there is
$N$ so that every ${\cF}_m$ with $m\geq N$ admits 
a $\varepsilon_0$-perturbation 
supported in the homothetic region, and so that the corresponding meridian are the $\sigma_i$.  

%\begin{clm} Given any $\delta>0$ there is $\eta>0$ so that, for every $m\in\NN$,  
%for every  diffeomorphism $\varphi$
%supported in  a fundamental domained of $F_m$ which is a 
%round annulus contained in the homothetic region of $F_m$,
%if the 
%$C^1$-distance of $\varphi$ with the indentity map is less than $\eta$ then the $C^1$ 
%distance between $\tilde \varphi$ and the identity map is less than $\delta$.
%\end{clm}
%\begin{proof} Is the fundamental domain is contained in the homothetic region of a fixed $F_m$,
%one may conclcude by a compactness argment of this homothetic region, which leads 
%to a bound of the differential of the projection of from the fundamental domain to $T_F$. 
%For proving the result  uniformly in $m$, one unses the invariant of the $C^1$ distance by conjugacy by an homothety. 
%Let $\lambda$ so that $F_m$ coincide with $\lambda Id$ on its homothetic region.  Then $\varphi$ and 
%$F^{-i}\varphi F\colon x\mapsto \lambda^{-i}\varphi(\lambda_i)$ have the same projection 
%$\tilde \varphi$ and 
%the same $C^1$ distance to the identity map.  By chosing $i$ in a convenient way, one gets a map suported in 
%a round fundamental domain of $F_2$, which concludes.
%\end{proof}
%Let us give a converse of this claim. 

We want to perturb $\cF_m$ in the homothetic region 
to realize the behavior of $\varphi$. For that let us 
first consider the relation between the diffeomorphism 
on the orbit space and that of original space.
Consider a diffeomorphism
$\varphi\colon \RR^2\times\{0\}\to\RR^2\times\{0\}$ 
whose support 
is contained in a fundamental domain of the return map 
$F_m \colon 
\RR^2\times \{0\} \to \RR^2\times \{0\}$
(remember that 
$F_m$ 
denotes the first return map
of diffeomorphism cocycle  $\cF_m$ ).
 Then $\varphi$ projects to a diffeomorphism 
 of $T^{\infty}_{\cF_m}$.  Let us denote this projection by  $\tilde\varphi$. 

In some special case, we can also define 
the lift of the diffeomoprhisms on $\RR^2\times\{0\}$
to $T^{\infty}_{\cF_m}$.
To describe it, we prepare one notion.
The round circles centered at the 
origin in $\mathbb{R}^2 \times \{ 0 \}$ contained in the 
homothetic region of $\cF_m$ induce a foliation 
by parallels on $T^{\infty}_{\cF_m}$.   
We call each leaf of this foliation a \emph{round  parallel}. 
Then, we have the following
(remember that $\Gamma_{s, t}$ in the claim
denotes the annuls bounded by two circles centered 
at the origin with radii $0 <s <r$).

\begin{clm} \label{c.lifting}
Given any $\eta>0$ there is $\mu>0$ satisfying the following:
Let $\tilde{\varphi}$ be a diffeomorphism 
of $T_{\mathcal{F}_m}$ 
which satisfies
\begin{itemize}
\item The $C^1$-distance between $\tilde{\varphi}$ and
the identity map is less than $\mu$.
\item There is a round parallel 
disjoint from the support of $\tilde \varphi$.
\end{itemize} 
Then for any $m>0$ and any $r$ 
so that $\Ga_{\lambda^2r,r}$ 
is contained in the homothetic region of $F_m$, 
there exists a diffeomorphism $\varphi$, 
supported in a round fundamental domain 
contained in $\Ga_{\lambda^2r,r}$, 
whose projection on $T^{\infty}_{\cF_m}$ is $\tilde{\varphi}$ and 
is an $\eta$-perturbation of the identity map
(for the definition of $C^1$-distance on 
diffeomorphisms of $T^{\infty}_{\cF_m}$, 
see Remark \ref{r.disorb}).
\end{clm}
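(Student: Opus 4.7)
The plan is to lift $\tilde\varphi$ to a carefully chosen round fundamental domain of $F_m$ inside the homothetic region, and to exploit the fact that conjugation by a homothety leaves the sup-norm of $\|D\varphi-\mathrm{Id}\|$ unchanged, in order to make the estimate uniform in $m$ and $r$.

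On the homothetic region the return map $F_m$ equals $A=\lambda\cdot\mathrm{Id}$, so every round annulus $\Gamma_{\lambda\rho,\rho}$ sitting inside it is a fundamental domain, and the quotient map $\pi_\rho:\Gamma_{\lambda\rho,\rho}\to T^{\infty}_{\mathcal{F}_m}$ glues the outer circle $\{|x|=\rho\}$ to the inner circle $\{|x|=\lambda\rho\}$ via $A$. Each round circle $\{|x|=\rho\}$ projects to a round parallel, and every round parallel is the image of a unique $\rho$ in any half-open scale interval $(\lambda t,t]$. I would therefore choose $\rho\in[\lambda r,r]$ so that the round parallel avoiding $\mathrm{supp}(\tilde\varphi)$ is $\pi_\rho(\{|x|=\rho\})$ and $\Gamma_{\lambda\rho,\rho}\subset\Gamma_{\lambda^{2}r,r}$. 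With this choice both boundary circles of $\Gamma_{\lambda\rho,\rho}$ project to the same parallel, on a neighborhood of which $\tilde\varphi$ is the identity, so $\pi_\rho$ restricted to the interior is a diffeomorphism onto the complement of the parallel. Setting $\varphi:=\pi_\rho^{-1}\circ\tilde\varphi\circ\pi_\rho$ on this interior and extending by the identity yields a diffeomorphism supported in $\Gamma_{\lambda\rho,\rho}$ which projects to $\tilde\varphi$.

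The estimate on $\|D\varphi-\mathrm{Id}\|_\infty$ is then obtained by conjugating to a fixed reference scale. With $H_\rho(x)=\rho x$, the conjugate $\varphi_0:=H_\rho^{-1}\circ\varphi\circ H_\rho$ is supported in the fixed annulus $\Gamma_{\lambda,1}$, and since $H_\rho$ is scalar multiplication we have $D\varphi_0(y)=D\varphi(\rho y)$, hence $\|D\varphi-\mathrm{Id}\|_\infty=\|D\varphi_0-\mathrm{Id}\|_\infty$. But $\varphi_0$ is precisely the canonical lift of $\tilde\varphi$ through the reference identification $\Gamma_{\lambda,1}/{\sim}\simeq T^{\infty}_{\mathcal{F}_m}$, which is a fixed local diffeomorphism and hence bi-Lipschitz on compact pieces with respect to the Euclidean metric on $\Gamma_{\lambda,1}$ and the $C^1$-metric on the torus used in Remark~\ref{r.disorb}. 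A routine continuity argument on the compact annulus $\Gamma_{\lambda,1}$ then yields the sought $\mu=\mu(\eta)>0$, independent of $m$ and $r$.

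The real obstacle is guaranteeing the uniformity in $m$ of this continuity constant; this is exactly what the rescaling step delivers, by reducing all the data to a single compact annulus with a single identification. The support condition on the round parallel plays a complementary role, ensuring that the lift extends cleanly by the identity across the identification seam, without which the construction of $\varphi$ would not even be well-defined as a diffeomorphism of $\mathbb{R}^2\times\{0\}$.
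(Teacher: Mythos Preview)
Your proposal is correct and follows essentially the same approach as the paper: lift $\tilde\varphi$ to a round fundamental domain using the round-parallel hypothesis, then use the homothetic-conjugacy invariance of $\|D\varphi-\mathrm{Id}\|_\infty$ to reduce the estimate to a single fixed reference annulus, where the result follows by continuity of the correspondence $\tilde\varphi\mapsto\varphi$ at the identity. Your write-up is in fact more explicit than the paper's (you spell out the rescaling $H_\rho$ and why it gives uniformity in $m$ and $r$), but the ideas coincide.
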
\label{c.lift}
\begin{proof} The fact that the support of $\tilde \varphi$ is 
disjoint from one round parallel implies that
it admits a lift on some round fundamental 
domain 
$\Ga= \Ga_{\lambda r_0, r_0} \subset \mathbb{R}^2 
\times \{ 0 \}$ 
of $F_m$
in a homothetic region.  
Up to some homothetic conjugacy 
we can assume that $\Ga\subset \Ga_{\lambda^2r,r}$. 

Under this situation, one can easily see that 
there exists a (unique) lift $\tilde \varphi$ 
to $\RR^2\times \{ 0 \}$ supported in $\Ga$. 
Let us consider the $C^1$-distance between 
the identity map and $\varphi$.
Since the $C^1$-distance of $\varphi$ to the identity 
does not depend on the choice of the lift in the 
homothetic region 
% (i.e. on the choice of the radius $r$),
we only need to consider a specific 
lift in a $\Ga_{\lambda^2r,r}$.
Since this correspondence $\tilde{\varphi} \mapsto \varphi$
is continuous and 
it sends the identity map on $T_F$ to the identity
map on $\mathbb{R}^2\times \{ 0 \}$, 
the choice of small $\mu$
endorses the closeness of the 
lifted diffeomorphism to the identity map.
\end{proof}

We perform a perturbation 
by composing such lifted maps to $\mathcal{F}_m$.
Let us see the effect of such perturbation.
First, for the $C^1$-distance, we have the following: 
given $\varphi$ supported in 
a round fundamental domain contained in the 
homothetic region of $F_m$, we denote 
by 
$\mathcal{F}_{m,\varphi} := \{f_{i, m , \varphi}\}$ 
the perturbation of 
the cocycle $F_m$ defined by
$f_{i,m,\varphi} :=f_{i,m}$ if $i\neq n-1$ 
and  $f_{n-1,m,\varphi} :=\varphi \circ f_{n-1,m}$. 
Then there is $C$ (depends only on $f_{n-1, m}$)
so that for every $m$, 
every $\eta>0$  and every $\varphi$ 
which is $\eta$-perturbation of the identity map, 
then $\mathcal{F}_{m,\varphi}$ is a $C\eta$-perturbation 
of the cocycle $\mathcal{F}_m$. 

For the behavior of the strong stable manifold, 
we have the following 
(Lemma \ref{l.fragmentation} below follows immediately from 
the definition):
\begin{lemm}\label{l.fragmentation} 
Let
$0<r_1<\lambda r_2<r_2<\lambda r_3\dots < r_k$ and 
$m >0$ given so that the round annulus 
$\Ga_{\lambda r_1,r_k}$ is contained in the 
homothetic region of $\mathcal{F}_m$.
Let $ \{\varphi_i\}$ $(i=1\dots,k)$ be diffeomorphisms 
on $\mathbb{R}^2 \times \{ 0 \}$ such that $\varphi_i$ is
supported in $\Ga_i= \Ga_{\lambda r_i,r_i}$,
and let $\Phi$ be the diffeomorphism which 
coincides with $ \varphi_i$ on $\Ga_i$ 
and equal to the identity outside $\Ga_{\lambda r_1,r_k}$. 
Then we have the following:
\begin{itemize}
 \item  $\mathcal{F}_{m,\Phi}$ is a contracting cocycle which coincides with $\mathcal{F}_m$ out of the homothetic region. 
 \item The meridians of $\mathcal{F}_{m,\Phi}$ are 
 $(\tilde \varphi_k)^{-1}\circ\dots\circ 
(\tilde\varphi_1)^{-1}(\gamma_i)$
\end{itemize}
\end{lemm}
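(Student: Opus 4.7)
The lemma says the proof ``follows immediately from the definition'', so the plan is to unpack the two bullets structurally rather than run a delicate perturbation argument.

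For the first bullet, observe that $\mathrm{supp}(\Phi)\subset\Gamma_{\lambda r_1,r_k}$ sits inside the homothetic region of $\cF_m$, and each $\varphi_i$ preserves its annulus $\Gamma_i$. Hence $\cF_{m,\Phi}$ coincides with $\cF_m$ as a cocycle outside the homothetic region. To verify contraction, write $F_{m,\Phi}=\Phi\circ F_m$: for any $p$, either $F_m(p)=\lambda p$ lies outside $\bigcup_i\Gamma_i$, in which case $\Phi$ is trivial and the radius contracts by $\lambda$; or $F_m(p)\in\Gamma_i$, in which case $F_{m,\Phi}(p)=\varphi_i(\lambda p)\in\Gamma_i$, so its radius is bounded by $r_i$. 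Iterating, every orbit is eventually driven into $B(\lambda r_1)$, where $\Phi$ is the identity and the pure homothety takes over, forcing convergence to the origin.

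For the second bullet, I would trace $W^{ss}(\mathbf{0}_0,\cF_{m,\Phi})$ outward from the origin. Because $\Phi$ equals the identity on $B(\lambda r_1)$, the local strong stable manifolds coincide: $W^{ss}_{loc}(\cF_{m,\Phi})=W^{ss}_{loc}(\cF_m)$, a segment of the strong stable ray. The inverse return map in the homothetic region is $F_{m,\Phi}^{-1}=F_m^{-1}\circ\Phi^{-1}=\lambda^{-1}\circ\Phi^{-1}$, which equals $\lambda^{-1}$ outside $\bigcup_i\Gamma_i$ and $\lambda^{-1}\circ\varphi_i^{-1}$ on $\Gamma_i$. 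Extending the local manifold outward therefore only dilates the strong stable curve, except when the iterate meets one of the annuli $\Gamma_i$, where it picks up the extra factor $\varphi_i^{-1}$.

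Projecting to $T^{\infty}_{\cF_m}=T^{\infty}_{\cF_{m,\Phi}}$ via a fundamental annulus outside $\mathrm{supp}(\Phi)$, the homothety $\lambda^{-1}$ becomes trivial, while $\varphi_i^{-1}$ supported on $\Gamma_i$ projects, by the very definition of $\tilde\varphi_i$, to $\tilde\varphi_i^{-1}$. Composing the contributions in the order in which the backward iterate meets $\Gamma_1,\Gamma_2,\ldots,\Gamma_k$ (innermost first) yields the advertised formula $(\tilde\varphi_k)^{-1}\circ\cdots\circ(\tilde\varphi_1)^{-1}(\gamma_i)$. The only bookkeeping to watch is the order of composition---fixed by the geometric order in which the backward iterate meets the annuli---and the fact that each $\Gamma_i$ is itself a fundamental annulus of the homothety, so all the projections $\pi_{\Gamma_i}$ realize the common orbit space $T^{\infty}_{\cF_m}$ and compose as claimed.
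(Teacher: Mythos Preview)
Your proposal is correct and is precisely the kind of unpacking the paper intends when it says the lemma ``follows immediately from the definition'': the paper gives no argument at all, so your elaboration---tracking the strong stable manifold outward through the annuli via $F_{m,\Phi}^{-1}=\lambda^{-1}\circ\Phi^{-1}$ and reading off the composition $(\tilde\varphi_k)^{-1}\circ\cdots\circ(\tilde\varphi_1)^{-1}$ on the orbit space---is exactly the computation being left to the reader. One very small sharpening: your contraction argument for the first bullet is slightly loose, since a single step need not strictly decrease the radius; the clean version is to note that $F_{m,\Phi}(\bar B(r_i))\subset\bar B(\lambda r_i)$ for each $i$ (because $\lambda p$ lies at or inside the inner boundary of $\Gamma_i$, where $\varphi_i$ is the identity, and any $\varphi_j$ with $j<i$ keeps the image inside $\bar B(r_j)\subset\bar B(\lambda r_i)$), so the orbit descends through the nested disks and reaches $\bar B(\lambda r_1)$ in finitely many steps.
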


Now let us perform the perturbation. 
Consider $\eta<\varepsilon_1/C$ and 
$\mu$ associated to $\eta$ by Claim~\ref{c.lifting}. 
The fragmentation lemma ensures that 
the diffeomorphism $\psi$ (for which $\psi(\gamma_i)=\sigma_i$) 
can be written as
$$\psi=(\tilde \varphi_k)^{-1}\circ\dots\circ (\tilde\varphi_1)^{-1}$$
where $k >0$ and $\tilde \varphi_i$ are 
diffeomorphisms of $T_F$ supported in small discs 
(so that each $\varphi_i$ has at least one round parallel 
disjoint from its support) and so that the $C^1$-distance 
from the identity is less that $\mu$. 

Then we fix $m>3(k+1)$ so that there is 
a round annulus $\Ga_{\lambda^m r,r}$ contained 
in the homothetic regions of $\mathcal{F}_m$. 
Then, each $\tilde\varphi_i$ admits a lift
$\varphi_i$ supported  in an annulus  
$\Ga_i = \Ga_{\lambda r_i,r_i}$ for 
some $\lambda^{m-3i+1} r\leq  r_i\leq \lambda^{m-3i} r$
such that the sequence $\{r_i\}$ satisfies the hypotheses 
of Lemma~\ref{l.fragmentation}. 
Therefore, $\cF_{m,\Phi}$
is the announced $\varepsilon_1$-perturbation 
where $\Phi$ is the diffeomorphism whch 
coincides with $ \varphi_i$ on $\Ga_i$ 
and equal to the identity out of the $\Ga_i$. 
\end{proof}

\begin{rema}\label{r.disorb}
In Claim \ref{c.lifting}, we did not specify the 
definition of $C^1$-distance put on the space of 
$C^1$-diffeomorphisms on $T_{\mathcal{F}}$.
In fact, as was elucidated in the proof, such a choice 
is not important for Claim \ref{c.lifting} and the whole proof. 

\end{rema}

% !TEX root =  flexible_01.tex

\section{Construction of retardable cocycles}\label{s.realiza}

The aim of this section is the proof of Proposition~\ref{p.flexible-retardable}. 
Let  $\cA=\{A_i\}$ be an $\varepsilon$-flexible contracting 
linear cocycle, $i\in\ZZ/n\ZZ$. This gives us, 
by definition, a path $\{A_{i,t}\}$ of contracting linear cocycles.  
We will use this path for building a retardable contracting 
cocycle isotopic to $\cA$ with several other properties.
Our main tool is the Proposition \ref{p.realization} below,
which realizes paths of contracting linear cocycles as 
diffeomorphisms contracting cocycles. 
Note that Proposition \ref{p.realization} is independent of 
the notion of flexibility. 

We prepare one notation.
Let 
\[
\cC_n :=\mathrm{GL}(2,\RR)^n=\{\cA=\{A_i\} \mid 
A_i\in \mathrm{GL}(2,\RR), i\in\ZZ/n\ZZ\}
\] 
be the space of linear cocycles of period  $n$.
Remember that a cocycle is called contracting if 
the total space $\mathbb{R}^2 \times \mathbb{Z} / n \mathbb{Z}$
is contained in the basin of the orbit of the origin.
We denote by $\cC_{n, \mathrm{con}}\subset \cC_n$ 
the (open) subset of contracting cocycles. 

\begin{prop}\label{p.realization}
Let $\cO\subset \cC_{n, \mathrm{con}}$ 
be a relatively compact open subset. 
Then for any $\varepsilon_1>0$ there is 
$\delta>0$ with the following property:
Consider any $C^1$-path $\cA_t\colon [0,+\infty[\to \cC_n$,  $t\mapsto \{A_{i,t} \mid i\in\ZZ/n\ZZ\}$ which is constant near $t =0$. 
Assume that we have 
$$\left\|\frac{\partial A_{i,t}}{\partial t}\right\| \leq \frac{\delta}{t}.$$

Then the cocycle of maps
$\mathcal{F}=\{f_i \mid  \RR^2\times\{i\} \to\RR^2\times\{i\}, i   \in\ZZ/n\ZZ\}$ 
defined as 
$$f_i (x) := A_{i,\|x\|}(x),$$
satisfies the following:
\begin{itemize}
 \item $\mathcal{F}$ is a contracting diffeomorphisms contracting cocycle;
 \item At each point $(x,i)$ one has 
 $$\left\|Df_i(x)-A_{i,\|x\|}\right\|<\varepsilon_1.$$
\end{itemize}
\end{prop}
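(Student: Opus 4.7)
The plan is to first derive the pointwise $C^1$-closeness bound by a direct calculation, and then to upgrade it to the two global statements (diffeomorphism and contraction) using compactness of $\overline{\cO}$ together with the log-Lipschitz nature of the hypothesis.

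First I would compute $Df_i(x)$ for $x\neq 0$. Writing $r=\|x\|$ and applying the product rule to $f_i(x)=A_{i,\|x\|}(x)$ gives
$$Df_i(x)\cdot v \;=\; A_{i,r}\,v \;+\; \Bigl(\left.\frac{\partial A_{i,t}}{\partial t}\right|_{t=r}\! x\Bigr)\,\frac{\langle x,v\rangle}{r}.$$
The second term has operator norm at most $\|\partial_t A_{i,r}\|\cdot r\le \delta$ by hypothesis, so $\|Df_i(x)-A_{i,\|x\|}\|\le \delta$. Choosing $\delta<\varepsilon_1$ (and small enough for what follows) secures the second bullet. Near the origin $\cA_t$ is constant, so $f_i$ agrees there with the linear map $A_{i,0}$ and is smooth across $0$. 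Since $\overline{\cO}$ is compact in $\mathrm{GL}(2,\RR)^n$, the matrices $A_{i,t}$ and their inverses are uniformly bounded by some constant $K$. For $\delta<1/K$, $Df_i(x)$ is invertible everywhere, so $f_i$ is a local diffeomorphism; moreover $\|f_i(x)\|=\|A_{i,\|x\|}(x)\|\ge K^{-1}\|x\|$, so $f_i$ is proper. A proper local diffeomorphism from $\RR^2$ to $\RR^2$ is a covering map and, since $\RR^2$ is simply connected, a global diffeomorphism.

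It remains to show that $\mathcal{F}$ is contracting, i.e.\ that the first return map $F=f_{n-1}\circ\cdots\circ f_0$ drives every orbit to the origin. By compactness of $\overline{\cO}$ the linear return maps $A_t=A_{n-1,t}\cdots A_{0,t}$ are \emph{uniformly} contracting: there exist an adapted Riemann metric on $\RR^2$ and $\lambda<1$ with $\|A_t\|\le\lambda$ throughout the range of the path. Along one period of an orbit $x_0,x_1,\ldots,x_n=F(x_0)$, all the intermediate radii $\|x_k\|$ stay within a uniformly bounded multiplicative factor of $\|x_0\|$. The log-Lipschitz hypothesis $\|\partial_t A_{i,t}\|\le\delta/t$ then forces $\|A_{k,\|x_k\|}-A_{k,\|x_0\|}\|=O(\delta)$, and together with the pointwise $\delta$-bound on $Df_k-A_{k,\|x_k\|}$ this gives $\|DF(x_0)-A_{\|x_0\|}\|=O(\delta)$. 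For $\delta$ sufficiently small, $DF$ is then a strict contraction in the adapted metric, so every forward orbit converges to $\boldsymbol{0}$.

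The main obstacle is the last step: the pointwise $\delta$-closeness between $Df_i$ and $A_{i,\|x\|}$ has to be combined with both the variation of radii along one period and the variation of the cocycle along the path, so that the compounded error over one period remains $O(\delta)$. The scale-invariant form $\delta/t$ of the hypothesis is precisely what makes the variation of the cocycle over one period uniformly $O(\delta)$, independent of the starting radius $\|x_0\|$, and this scale-invariance is the whole point of the assumption.
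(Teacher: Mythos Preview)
Your approach is essentially the paper's: compute $Df_i$ directly, use the $\delta/t$ hypothesis to get the pointwise bound, then control the compounded error over several iterates by bounding the multiplicative drift of the radii and invoking the log-Lipschitz estimate on $t\mapsto A_{i,t}$. Your properness/covering argument for the global diffeomorphism claim is in fact more explicit than the paper's ``some topological observation''.

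There is, however, one genuine gap. You assert that by compactness of $\overline{\cO}$ there is a \emph{single} adapted inner product on $\RR^2$ and some $\lambda<1$ with $\|A_t\|\le\lambda$ for every $t$ on the path. This is false in general: a compact family of $2\times 2$ matrices, each with spectral radius strictly less than $1$, need not admit a common contracting norm. For instance (with $n=1$), let the path join
\[
\begin{pmatrix}1/2 & N\\0&1/2\end{pmatrix}
\quad\mbox{to}\quad
\begin{pmatrix}1/2 & 0\\N&1/2\end{pmatrix}
\]
through $\tfrac12\,\mathrm{Id}$, staying among triangular matrices with both eigenvalues equal to $1/2$. The product of the two endpoints has trace $\tfrac12+N^2$ and determinant $\tfrac1{16}$, hence an eigenvalue of order $N^2$; if both endpoints were contractions for one norm, so would their product be. Thus no single adapted metric exists, and your last step does not go through as written.

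The paper circumvents this by staying in the original metric but replacing ``one period'' by ``$k_\cO$ iterates'': compactness of $\overline{\cO}$ yields a uniform integer $k_\cO$ with $\|A_{i+k_\cO-1,t}\cdots A_{i,t}\|<\tfrac12$ for every $i$ and every $t$. Your compounding argument (bounded drift of radii over finitely many steps, log-Lipschitz control of $A_{i,t}$ on a bounded multiplicative window, continuity of products of bounded matrices) then applies verbatim over $k_\cO$ steps and gives
\[
\left\|D\mathcal{F}^{k_\cO}(x,i)-A_{i+k_\cO-1,\|x\|}\cdots A_{i,\|x\|}\right\|<\varepsilon_1,
\]
hence $\|D\mathcal{F}^{k_\cO}\|<\tfrac12+\varepsilon_1<1$, which is the contraction you want.
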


Let us first show how to
derive  Proposition~\ref{p.flexible-retardable}
from Proposition~\ref{p.realization}.

\subsection{Proof of  Proposition~\ref{p.flexible-retardable}} 

Let $\cA=\{A_i\}$ be a $\varepsilon$-flexible cocycle and 
$\cA_t=\{A_{i,t}, t\in[-1,1]\}$ be the path of 
cocycles in the definition of the $\varepsilon$-flexibility. 
Proposition~\ref{p.flexible-retardable} claims the 
existence of a contracting diffeomorphisms 
cocycle $\mathcal{F}$ coinciding with 
$\cA$ out of the unit ball, with the homothety on 
the $n$ first iterates of a round fundamental domain, and 
with $A_1$ in a neighborhood of the orbit of the origin. 
Furthermore, $\mathcal{F}$ needs to be isotopic to $\cA$ through 
contracting cocycles 
coinciding with $\cA$ out of the unit ball and having 
two different real positive eigenvalues. 

Recall that $\cA_t$  is a contracting cocycle for $t\neq 1$,
but $\cA_1$ is not a contracting eigenvalue. 
In order to deal with contracting property 
we will show the proposition but replacing $\cA_1$ 
by $\cA_{1-\eta}$ for some 
very small $\eta$: the corresponding diffeomorphisms cocycle will coincide with $\cA_{1-\eta}$ in a  
neighborhood of the periodic orbit and an extra small perturbation will change $\cA_{1-\eta}$ in $\cA_1$. 

The path $\cA_t, t\in[-1,1-\eta]$ is a compact segment 
in the open set of contracting linear cocycle. 
Therefore we can approximate it by a smooth path with the same properties: Thus we assume that 
$t\mapsto \cA_t$ is smooth. 
Recall that $\cA_{-1}$ is a homothety 
of ratio $\lambda<1$ in the period, 
$\cA_0=\cA$, and $\cA_t$ has two different 
real eigenvalues for $t \neq -1$. 
First, we reparametrize $\cA_t$ in order to apply Proposition~\ref{p.realization}. 

\begin{lemm}\label{l.realization}
Let $a(t) :  [0, 1] \to V$ be a smooth path in an 
Euclidean space $V$. 
Then, for every $\delta > 0$ there exists a smooth
non-decreasing function 
$\theta : [0, +\infty ) \to [0, 1]$ satisfying the following:
\begin{itemize}
\item $\theta (t) \equiv 0$ near $t=0$. 
\item $\theta (t) \equiv 1$ for  $t>1$. 
\item For every $t \in (0, +\infty )$, we have the following inequality:
\[
\left\| \frac{d(a \circ \theta)}{dt}(t) \right\| < \frac{ \delta}{t}.
\]
\end{itemize}
\end{lemm}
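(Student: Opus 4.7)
The plan is to reduce the lemma to a one-variable question about the scalar function $\theta$ by the chain rule. Since $a$ is smooth on the compact interval $[0,1]$, the quantity $M := \sup_{s\in[0,1]}\|a'(s)\|$ is finite, and for every $t\in(0,+\infty)$ we have
\[
\left\|\frac{d(a\circ\theta)}{dt}(t)\right\| = \|a'(\theta(t))\|\,\theta'(t) \leq M\,\theta'(t).
\]
Hence it suffices to construct a smooth non-decreasing $\theta\colon[0,+\infty)\to[0,1]$ with $\theta\equiv 0$ near $0$, $\theta\equiv 1$ for $t>1$, and $\theta'(t) < \delta/(Mt)$ for all $t>0$. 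Set $c := \delta/M$; the task is to build $\theta$ with $\theta'(t) < c/t$.

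The idea is to exploit the divergence of $\int_0^1 dt/t$: by pushing the support of $\theta'$ close to $0$, we gain enough room under the ceiling $c/t$ to accumulate total variation $1$ while keeping $\theta'$ small. Concretely, I would pick $t_0\in(0,1)$ so small that $-\log t_0 > 2/c$, and then choose a smooth bump function $\chi\colon[0,+\infty)\to[0,1]$ supported in $[t_0,1]$, vanishing in neighborhoods of $t_0$ and $1$, and satisfying
\[
\int_{t_0}^{1} \frac{\chi(t)}{t}\,dt = \frac{1}{c}.
\]
Such a $\chi$ exists because the integral $\int_{t_0}^1 dt/t = -\log t_0$ is strictly larger than $1/c$, so a standard interpolation provides a smooth bump with the prescribed weighted integral and values in $[0,1]$. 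Define
\[
\theta(t) := \int_0^t \frac{c\,\chi(s)}{s}\,ds.
\]

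The function $\theta$ is smooth on $[0,+\infty)$ (the integrand vanishes identically near $0$ because $\chi$ does, so all derivatives of $\theta$ at $0$ are zero), is non-decreasing, equals $0$ on $[0,t_0]$, and equals $1$ for $t\geq 1$ by the normalization of $\chi$. Moreover $\theta'(t) = c\chi(t)/t \leq c/t = \delta/(Mt)$ for every $t>0$, which by the chain rule gives the desired estimate $\|(a\circ\theta)'(t)\| < \delta/t$. The only mild subtlety is checking that the strict inequality (rather than a weak one) can be achieved: this is obtained by taking $\chi$ strictly less than $1$, or equivalently by applying the construction with $\delta/2$ in place of $\delta$. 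There is no substantive obstacle here; the essential ingredient is simply the logarithmic divergence at $0$, which makes the budget $\int c\,dt/t$ unbounded and thus able to absorb the total variation~$1$.
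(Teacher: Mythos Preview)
Your proof is correct and follows exactly the idea behind the paper's one-line argument: the paper simply notes that the length of $a$ is finite while $\int_0^1 \delta t^{-1}\,dt$ diverges, whereas you carry out the explicit construction this observation suggests. The only cosmetic gap is the trivial case $M=0$ (constant $a$), where any admissible $\theta$ works.
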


\begin{proof}
Just note that the length of the path $a(t)$ is finite 
while the integral $\int_0^1\delta t^{-1} dt$ is infinite for $\delta >0$.
\end{proof}

Let us start the proof of Proposition \ref{p.flexible-retardable}.

\begin{proof}[Proof of Proposition \ref{p.flexible-retardable}]

Let $\cA=\{A_i\}$ be an $\varepsilon$-flexible cocycle.
First, we fix $\varepsilon_1$ sufficiently small
so that $\varepsilon_1 + \mathrm{dist}(\cA_t) < \varepsilon$ holds.
We also fix small $\eta > 0$.  Precise choice of $\eta$ is fixed at the 
end of this proof.
By applying Lemma \ref{l.realization},
we reparametrize the path 
$\cA_t, t\in [-1,1-\eta]$ by a function $\theta\colon[0,+\infty[\to[-1,1-\eta]$ so that we have the following:
\begin{itemize}
 \item  For $t >0$ we have the following inequality:
$$\left\|\frac{\partial A_{i,\theta(t)}}{\partial t}\right\| \leq \frac{\delta}{t},$$
 \item $\theta(t)=0$ for $t\geq 1$,
 \item $\theta(t)=1-\eta$ near $t =0$,
 \item There are $0 < t_3< t_2 < t_1<t_0 <1$ such that:
\begin{itemize}
\item We have $\theta(t)=-1$ for $t\in[t_3,t_0]$.
 \item For $\{ t_i\}$ we have the following inequalities: 
$$ t_2<\lambda t_1, \quad t_3< K^{-n} t_2<t_1 K^n < t_0$$
where $n$ is the period of cocycle, 
$K := \max \{ \|A_{i,t}^{\pm 1}\|  \} \geq 1$ 
and $\lambda$ is the rate of the 
contraction of the homothety $A_{-1}$.
\end{itemize}
\end{itemize}

Then the announced cocycle $\mathcal{F} = \{ f_i\}$ is defined by 
$f_i(x) := A_{i, \theta(\|x\|)}(x)$. 
Indeed, the Proposition \ref{p.realization}, together with the 
first condition on $\theta$
implies the contraction property of $\mathcal{F}$.
The last condition ensures that this cocycle (in the period) is 
an homothety of ratio $\lambda$ on at 
least one fundamental domain, implying the retardable property.
More precisely, by choosing $R_1 = t_3$, 
$R_2 =t_1$  and $R_3 = t_0$, we can check the 
retardable property. 
Note that by the choice of $\varepsilon_1$,
we can deduce that the diffeomorphism cocycle $\mathcal{F}$
itself is an $\varepsilon$-perturbation of $\{A_i\}$. Note that
Lemma \ref{l.retper} implies that its retarded cocycles are 
also $\varepsilon$-perturbation.

Furthermore, the cocycle $\cF$ is isotopic 
to $\cA$ through contracting cocycles 
which coincide with $\cA$ 
out of the unit ball, and whose periodic orbit has 
two distinct real positive eigenvalues: 
for that it is enough to change $\theta$ 
by $\theta_s(t)=s\theta(t)$ for $s\in[0,1]$ (note that 
for every $s\in[0,1]$ we can apply Proposition \ref{p.realization}). 

To finish the proof, it remains to perform an extra perturbation 
in a very small neighborhood of the periodic point for 
turning the weakest eigenvalue of 
$\cA_{1-\eta}$ to be equal to $1$ preserving 
the contracting property of the cocycle. 
We can see that if $\eta$ is sufficiently small, then such a
perturbation can be attained in the form of isotopy. 
More precisely, first we perform an perturbation so that the local 
dynamics along the periodic orbit exhibits an eigenvalue-one direction. 
Then we add another perturbation so that the central direction 
turns to be topologically attracting, keeping the eigenvalue.

Thus the proof is completed.
\end{proof}

\subsection{Realizing a path of linear cocycles 
as a diffeomorphisms cocycle: 
proof of Proposition~\ref{p.realization}}

Let us start the proof of Proposition~\ref{p.realization}.
We consider a relatively compact open 
subset $\mathcal{O} \subset \cO_{n, \mathrm{con}}$, 
(remember 
that $\cO_{n, \mathrm{con}}$
is the space of contracting linear cocycles of period $n$). 
Before starting the proof, let us have some auxiliary observations.

We put 
$$K_\cO :=\max\{\|A_i^{\pm 1}\| \mid \cA=\{A_i\}_{i\in\ZZ/n\ZZ}\in\cO\} \geq 1,$$
that is, the bound of matrices and 
their inverse  for  the cocycles in $\cO$.

The relative compactness of $\cO$
(compactness of the closure) implies that 
this bound is finite.
The relative compactness of $\mathcal{O}$, together with 
the fact that each cocycle in $\cO$ is 
contracting, implies that their are uniformly contracting
in the following sense:
\begin{lemm} Let $\cO$ be a relatively compact set 
of contracting linear cocycles.  
Then there is $k_\cO>0$ such that for every $\cA=\{A_i\}\in \cO$ 
and for every $i\in\ZZ/n\ZZ$ we have 
 $$\|A_{i+k_{\mathcal{O}}-1}\circ\cdots\circ A_i\|<\frac 12.$$
\end{lemm}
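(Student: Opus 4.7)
The plan is to reduce the statement to finding a single exponent $M$ for which the $M$th power of the period-$n$ monodromy is uniformly small on $\overline{\cO}$, and then to extract such an $M$ by compactness.

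First, for each $\cA=\{A_j\}\in\cO$ and each $i\in\ZZ/n\ZZ$ I would set $B_i(\cA) := A_{i+n-1} \circ \cdots \circ A_i$. Using periodicity $A_{j+n}=A_j$, an immediate induction gives
$$B_i(\cA)=P_i B_0(\cA) P_i^{-1}, \quad\text{where } P_i := A_{i-1}\circ\cdots\circ A_0,$$
with the convention $P_0=\mathrm{Id}$, and with $\|P_i^{\pm 1}\| \leq K_{\cO}^{n-1}$. Moreover $B_{i+n}(\cA)=B_i(\cA)$, so a product of $Mn$ consecutive matrices starting at position $i$ is exactly $B_i(\cA)^M$. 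Therefore, once we produce an integer $M\geq 1$ such that
$$\|B_0(\cA)^M\| < \frac{1}{2 K_{\cO}^{2(n-1)}} \quad\text{for every } \cA\in\overline{\cO},$$
the conjugation yields $\|B_i(\cA)^M\| \leq K_{\cO}^{2(n-1)}\|B_0(\cA)^M\| < 1/2$ for all $i$ and all $\cA$, and $k_{\cO} := Mn$ satisfies the lemma.

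To produce such $M$, I would argue as follows. Since $\cO$ is relatively compact in $\cC_{n,\mathrm{con}}$, the closure $\overline{\cO}$ is compact and still consists of contracting cocycles. For every $\cA\in\overline{\cO}$ the spectral radius $\rho(B_0(\cA))$ is strictly less than $1$, so $B_0(\cA)^m \to 0$ and there exists $m(\cA)\geq 1$ with $\|B_0(\cA)^{m(\cA)}\| < 1/(2 K_{\cO}^{2(n-1)})$. By continuity of $\cB\mapsto B_0(\cB)^{m(\cA)}$, the same inequality persists on an open neighborhood $U(\cA)$ of $\cA$. Extract a finite subcover $U(\cA_1),\dots,U(\cA_N)$ of $\overline{\cO}$ and set $M := m(\cA_1)\cdots m(\cA_N)$. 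For an arbitrary $\cB\in\overline{\cO}$, pick $j$ with $\cB\in U(\cA_j)$; submultiplicativity of the operator norm gives
$$\|B_0(\cB)^M\| \leq \|B_0(\cB)^{m(\cA_j)}\|^{M/m(\cA_j)} < \Bigl(\tfrac{1}{2 K_{\cO}^{2(n-1)}}\Bigr)^{M/m(\cA_j)} \leq \tfrac{1}{2 K_{\cO}^{2(n-1)}},$$
which is the desired uniform estimate.

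The only delicate point is this uniformity: the convergence $B_0(\cA)^m\to 0$ is a priori not uniform in $\cA$, so no single exponent works pointwise everywhere. The covering step combined with submultiplicativity is precisely what converts a pointwise-chosen family of exponents $\{m(\cA)\}$ into one universal $M$. Everything else — the conjugation relating $B_i$ and $B_0$, and choosing $k_{\cO}$ to be a multiple of $n$ — is bookkeeping.
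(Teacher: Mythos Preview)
Your argument is correct. The paper does not supply a proof of this lemma---it is stated and then used as a direct consequence of compactness---so there is no approach to compare against. One small streamlining is available: rather than conjugating to reduce everything to the base index $i=0$ and carrying the factor $K_\cO^{2(n-1)}$, you can run the identical covering-plus-submultiplicativity argument directly on
\[
\phi_k(\cA):=\max_{i\in\ZZ/n\ZZ}\|A_{i+k-1}\circ\cdots\circ A_i\|,
\]
which is continuous in $\cA$ and satisfies $\phi_{k+l}\leq\phi_k\,\phi_l$ (any length-$(k+l)$ product factors as a length-$l$ product times a length-$k$ product starting at a shifted index). This gives a uniform $k_\cO$ with $\phi_{k_\cO}<1/2$ on $\overline{\cO}$ without the detour through the period-$n$ monodromy; but your route via $B_i=P_iB_0P_i^{-1}$ is equally valid.
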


\begin{rema}In fact, we will prove that  
the number $\delta$ announced by 
Proposition~\ref{p.realization} only depends on 
$\varepsilon$,  $K_\cO$ and $k_\cO$ 
and is independent of the period $n$ and 
of the relatively compact set $\cO$. 
\end{rema}

Note that the relative compactness of $\mathcal{O}$
also implies that $\overline{\mathcal{O}}$ does not contain 
any singular matrices. This fact, combined with compactness 
argument yields the following:
\begin{lemm}\label{l.inverti}
Given a relatively compact set 
$\mathcal{O} \subset \mathcal{C}_{n,\mathrm{con}}$,
there exists $\mu_{O} > 0$  such that for every 
$\{ A_{i} \} \in \mathcal{O}$, if $\{ B_i \} \in M(2,\RR)^n$ 
(where $M(2,\RR)$ is a set of square matrices of size $2$)
satisfies $\| A_i -B_i \| < \mu_{\mathcal{O}}$ then we have 
$\{ B_i \} \in \mathrm{GL}(2,\RR)^n$.
\end{lemm}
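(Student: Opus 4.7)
The plan is to prove Lemma~\ref{l.inverti} by a standard compactness argument, using the fact that invertibility in $M(2,\RR)$ is detected by non-vanishing of the determinant. First I would observe that since $\mathcal{O}$ is relatively compact in $\mathcal{C}_{n,\mathrm{con}} \subset \mathrm{GL}(2,\RR)^n$, its closure $\overline{\mathcal{O}}$ is a compact subset of $\mathrm{GL}(2,\RR)^n$. In particular, no matrix appearing as a coordinate of a cocycle in $\overline{\mathcal{O}}$ is singular.

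Next, I would define a continuous positive function on $\overline{\mathcal{O}}$ by
\[
\Phi(\{A_i\}) := \min_{i \in \ZZ/n\ZZ} |\det A_i|,
\]
which is strictly positive on $\overline{\mathcal{O}}$ because each $A_i$ lies in $\mathrm{GL}(2,\RR)$. By compactness, $\Phi$ attains its minimum, so there exists $c > 0$ with $|\det A_i| \geq c$ for every cocycle $\{A_i\}\in\overline{\mathcal{O}}$ and every index $i$.

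The last step is to use continuity of the determinant. On a bounded neighborhood of $\overline{\mathcal{O}}$ in $M(2,\RR)^n$, the map $B \mapsto \det B$ is uniformly continuous; hence there exists $\mu_{\mathcal{O}} > 0$ such that $\|A_i - B_i\| < \mu_{\mathcal{O}}$ (for all $i$) with $\{A_i\} \in \mathcal{O}$ forces $|\det B_i - \det A_i| < c/2$, and therefore $|\det B_i| \geq c/2 > 0$. This gives $B_i \in \mathrm{GL}(2,\RR)$ for every $i$, proving the lemma.

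There is no real obstacle here: this is purely a compactness/continuity statement and the only thing one needs to be careful about is the interpretation of ``relatively compact'' (that the closure is taken inside $\mathrm{GL}(2,\RR)^n$, so that singular matrices are excluded as limit points). Once this is clear, the uniform lower bound on $|\det A_i|$ and the uniform continuity of the determinant yield the conclusion directly.
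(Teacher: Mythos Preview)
Your proof is correct and follows exactly the approach the paper indicates: the paper does not give a detailed proof but simply notes that relative compactness of $\mathcal{O}$ implies $\overline{\mathcal{O}}$ contains no singular matrices, and that a compactness argument then yields the lemma. Your use of the determinant to make this compactness argument explicit is the natural way to fill in the details, and your remark about the interpretation of ``relatively compact'' (so that singular matrices are excluded from the closure) is precisely the point the paper highlights.
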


\begin{rema}\label{r.inverti}
Let $K >1$. Then, the following set
$$
\mathcal{B}_K := \{\{A_i\} \in  \mathrm{GL}(2,\RR)^n  \mid \max\{\|A_i^{\pm 1}\| \} \leq K\}
$$
is a compact set. Thus, we can apply Lemma \ref{l.inverti} to 
$\mathcal{B}_K $. We denote corresponding $\mu$ by $\mu_K$.
\end{rema}

Then, we prove the following.
\begin{prop}\label{p.realization1} Given $K\geq 1$, 
$ \varepsilon_1 >0$ and 
an integer $k>0$ there exists $\delta>0$ 
such that the following holds:
Given any $n>0$ and any path $\cA_t=\{A_{i,t}\}_{i\in\ZZ/n\ZZ}$, $(t\in[0,1])$
satisfying:
\begin{itemize}
 \item $\|A_{i,t}^{\pm 1}\|<K$ for every $i$ and $t$, 
 \item $\|A_{i+k-1,t}\circ\cdots\circ A_{i,t}\|<1/2$ for every $i$ and $t$,
 \item $\displaystyle \left\|\frac{\partial A_{i,t}}{\partial t}\right\| \leq \frac{\delta}{t}$.
\end{itemize}

Then the diffeomoprhims cocycle 
$\mathcal{F} =\{F_i\}_{i\in\ZZ/n\ZZ}$, 
where $f_i\colon \RR^2\times\{i\} \to\RR^2\times\{i +1\}$ are define as 
$$f_i(x) := A_{i,\|x\|}(x)$$
satisfies:
\begin{itemize}
 \item $\cF$ is a contracting diffeomorphisms contracting cocycle,
 \item For each point $(x,i) \in \RR^2 \times \ZZ / n\ZZ$ and 
for every $0\leq j\leq k-1$, we have 
 $$\left\|D\cF^j(x,i)-A_{i+j-1,\|x\|}\circ\cdots\circ A_{i,\|x\|}\right\|<\varepsilon_1.$$
(Remember that $\mathcal{F}$ is the diffeomorphism of total space
$\RR^2 \times \ZZ / n\ZZ$.)
\end{itemize}
\end{prop}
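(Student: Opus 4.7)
The plan is to prove this by direct computation and book-keeping, using the fact that the nonlinearity of $f_i(x)=A_{i,\|x\|}(x)$ comes only from the dependence of $A_{i,t}$ on $t=\|x\|$, which is controlled by the hypothesis $\|\partial_tA_{i,t}\|\le\delta/t$. The constant $\delta$ will be chosen at the end depending only on $K$, $k$ and $\varepsilon_1$.

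First I would differentiate $f_i$ pointwise. Writing $f_i(x)=A_{i,\|x\|}(x)$ and applying the chain rule gives
\[
Df_i(x)\,v = A_{i,\|x\|}\,v + \Bigl(\partial_t A_{i,t}\Bigr)\Big|_{t=\|x\|}\!(x)\cdot\frac{\langle x,v\rangle}{\|x\|}.
\]
Because $\|\partial_tA_{i,t}\|\le\delta/t$ and $\bigl|\langle x,v\rangle/\|x\|\bigr|\le\|v\|$, the extra term has norm at most $(\delta/\|x\|)\cdot\|x\|\cdot\|v\|=\delta\|v\|$. Hence the basic pointwise estimate
\[
\|Df_i(x)-A_{i,\|x\|}\|\le\delta \qquad\text{for every } x,
\]
and $f_i$ is smooth at the origin since the path is constant near $t=0$.

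Next I would track the orbit of a point $x\in\RR^2\times\{i\}$ under $\cF$. Putting $x_l:=\cF^l(x)$ and using $\|A_{i+l,t}^{\pm 1}\|<K$, one immediately gets
\[
K^{-l}\|x\|\le\|x_l\|\le K^l\|x\|.
\]
From this I would deduce, by integrating the bound $\|\partial_tA_{i+l,t}\|\le\delta/t$ between $\|x\|$ and $\|x_l\|$,
\[
\|A_{i+l,\|x_l\|}-A_{i+l,\|x\|}\|\le\delta\bigl|\log(\|x_l\|/\|x\|)\bigr|\le \delta\,l\log K.
\]
Combining with the pointwise estimate gives $\|Df_{i+l}(x_l)-A_{i+l,\|x\|}\|\le\delta\bigl(1+(k-1)\log K\bigr)$ for $0\le l\le k-1$.

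Now the product estimate is a standard telescoping argument. Writing $D\cF^j(x)=\prod_{l=0}^{j-1}Df_{i+l}(x_l)$ and $P_j:=\prod_{l=0}^{j-1}A_{i+l,\|x\|}$, using that each factor has norm at most $K+\delta$ and each difference is at most $\delta(1+(k-1)\log K)$, one obtains
\[
\|D\cF^j(x)-P_j\|\le j(K+\delta)^{j-1}\,\delta\bigl(1+(k-1)\log K\bigr)\qquad(0\le j\le k-1),
\]
which is $<\varepsilon_1$ provided $\delta$ is chosen small depending only on $K,k,\varepsilon_1$. Finally, applying exactly the same telescoping estimate with $j=k$ and invoking the hypothesis $\|P_k\|=\|A_{i+k-1,t}\cdots A_{i,t}\|<1/2$, I get $\|D\cF^k(x)\|<1/2+C(K,k)\delta<1$ for $\delta$ small. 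Since each $f_i$ fixes the origin, $\cF^k$ is a uniform contraction of Lipschitz constant $<1$ on every fibre of the total space, and iterating shows that every orbit converges to the $0$-section, establishing that $\cF$ is a contracting diffeomorphism cocycle.

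The only delicate point is the integration in step three: the derivative bound $\delta/t$ blows up at the origin, so one must use the \emph{logarithmic} integral between $\|x\|$ and $\|x_l\|$ rather than the additive one, and exploit the fact that the ratio $\|x_l\|/\|x\|$ is bounded by $K^l$ with $l\le k-1$. The resulting bound is independent of $\|x\|$, of the period $n$, and of the cocycle within $\mathcal{O}$, as the remark in the paper already suggests.
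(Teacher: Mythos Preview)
Your proof is correct and follows essentially the same route as the paper: differentiate $f_i$ to get the $j=1$ estimate, bound the ratio $\|x_l\|/\|x\|$ by powers of $K$, use this to control $\|A_{i+l,\|x_l\|}-A_{i+l,\|x\|}\|$, then pass to the product and deduce contraction from $\|D\cF^k\|<1$. The only cosmetic difference is that you carry out the intermediate estimates explicitly (the logarithmic integral and a telescoping product bound), whereas the paper packages them into two auxiliary claims proved by the mean value theorem and a compactness/continuity argument; your quantitative bounds make the dependence of $\delta$ on $K,k,\varepsilon_1$ transparent, at no cost.
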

\begin{proof}
First, note that the inequality in the
second item implies the contracting property: 
the second item implies that 
$\|D\mathcal{F}^k\|<1/2 +\varepsilon_1$, thus by exchanging $\varepsilon_1$
with a number smaller than $1/2$ if necessary, we obtain 
$\|DF^k\|<1$. 

Let us start proving this inequality
for $j=1$. By direct calculation, for every $i$, we have
$$Df_i(x)= D(A_{i,\|x\|})(x,i)= A_{i,\|x\|}+ 
 \left( \frac{d A_{i,t}}{dt}\Big|_{t=\|x\|} \right) \otimes
\left( \frac{ \partial (\|x\|)}{\partial x} \right) (x),$$ 
Therefore, we have
$$\|Df_i(x)-A_{i,\|x\|}\|\leq C \|x\| \cdot \left\| \frac{d A_{i,\| x\|}}{dt} \right\|,$$ 
where $C$ is a constant which does not depend on particular choice 
of other constants.

Then, we fix $\delta$ so that 
$\delta<\min \{\varepsilon_1, \mu_K \}/C$ holds (for the definition 
of $\mu_K$ see Lemma \ref{l.inverti} and Remark \ref{r.inverti}).
This guarantees that  
$\left\|\frac{\partial A_{i,t}}{\partial t}\right\| \leq\frac{\delta}{t}< \min \{\varepsilon_1, \mu_K \}/(Ct)$.
So we have: 
$$\|Df_i(x)-A_{i,\|x\|}\|\leq \|x\|. \frac{C\delta}{\|x\|}=C\delta<\min \{\varepsilon_1, \mu_K \},$$ 
A priori, each
which implies the desired inequality for $j=1$.
Furthermore, this inequality, together with Lemma \ref{l.inverti},
shows that $Df_i : \mathbb{R}^2 \times \{ i \} \to 
\mathbb{R}^2 \times \{ i+1 \}$ is a local diffeomorphism.
This fact, combined with some topological observation, concludes 
that $f_i$ is a diffeomorphism for every $i$.

Now we start the proof of inequality above for general $j < k$.
The difficulty is the following:
The differential $D\mathcal{F}^j(x,i)$ is the product 
$$D\mathcal{F}^j(x,i)= D\cF(\cF^{j-1}(x,i) )\circ\cdots\circ D\cF(x,i).$$
A priori, the distance between $\cF^{j-1}(x,i)$ and $(x, i+j)$ can be 
very big.  Thus the corresponding differentiations can be very different.
Our strategy is to choose sufficiently small 
$\delta$ so that the matrix 
$A_{i+\ell,\|F^\ell(x,i)\|}$ remains almost 
equal to $A_{i+\ell,\|x\|}$, for $0\leq \ell\leq k-1$.

We start from bounding $\|\mathcal{F}^\ell(x,i)\|$ for $0\leq\ell<k-1$ as follows: 
$$ \frac{\|x\|}{K^k}  \leq \|F^\ell(x,i)\|\leq K^k \|x\|,$$
where $K$ is the uniform bound of the norms $\|A^{\pm1}_{i,t}\|$.
Then, a simple argument involving the mean value theorem implies: 
\begin{clm}\label{cl.meanvalue} For any $\nu>0$ there is $\delta>0$ such that if
$\displaystyle \left\|\frac{\partial A_{i,t}}{\partial t}\right\| \leq \frac{\delta}{t}$, 
for every $t >0$, 
then for every $t >0$, $i \in \ZZ /n\ZZ$, $\ell$ satisfying $0\leq \ell<k$ and
$s \in [ K^{-k}t,K^k t] $ we have
$$\|A_{i+\ell, t}-A_{i+\ell, s}\|<\nu. $$ 
\end{clm}
Also a simple compactness argument together with the continuity of product of matrices shows the following.
\begin{clm}\label{cl.continuity}
There is  $\nu>0$ so that for any matrices 
$\{B_{i,t}\}_{i\in\ZZ/n\ZZ,t\in[0,1]}$
with $\|B_{i,t}-A_{i,t}\|<2\nu$, any 
$0\leq j <k$ and for any $i, t, \varepsilon_1 >0$ we have:

$$\left\|B_{i+j-1,t}\circ\cdots\circ B_{i,t}-  
A_{i+j-1,t}\circ\cdots\circ A_{i,t}\right\|<\varepsilon_1.$$
\end{clm}
Now we are ready for proving the inequality.
We fix $\nu>0$ given by Claim~\ref{cl.continuity} and 
$\delta$ given by Claim~\ref{cl.meanvalue}.
Then for every $0\leq \ell<k$ we have 
\begin{align*}
 \left\|D\cF(\cF^\ell(x,i))-A_{i+\ell,\|x\|}\right\|&
 \leq \left\|D\cF(\cF^\ell(x,i))-A_{i+\ell,\|\cF^\ell(x,i)\|}\right\|+ 
 \left\|A_{i+\ell,\|\cF^\ell(x,i)\|}-A_{i+\ell,\|x\|}\right\|\\
 &\leq \nu+\nu=2\nu.
\end{align*}
The choice of $\nu$ implies the announced inequality
$$\left\|D\cF^j(x,i)-A_{i+j-1,\|x\|}\circ\cdots\circ A_{i,\|x\|}\right\|<\varepsilon_1.$$
Thus the proof is completed.
\end{proof}

% !TEX root = flexible_01.tex

\section{Abundance of flexible periodic points}\label{s.abunda}

The aim of this section is the proof of Theorem~\ref{t.aboundance}  
(and therefore of Theorem~\ref{t.generic}). 

The proof contains two steps. 
The first step is showing that the hypotheses of 
Theorem~\ref{t.aboundance} lead 
(up to arbitrarily small perturbations) to the coexistence 
of points with complex stable eigenvalues and points with a stable 
eigenvalue arbitrarily close to $1$ in the same basic set.  
The second one is showing that such basic sets 
contain flexible points. 

In this section,  $M$ denotes a smooth 
compact manifold endowed with a Riemannian metric.

\subsection{Periodic points with arbitrarily weak stable eigenvalue}

Let us start the proof of first step. 
We show the following.
\begin{prop}\label{p.weak} 
Let $f \in \mathrm{Diff}^1(M)$
with a hyperbolic periodic point $p$
and an open neighborhood $U$ of the 
orbit  $\cO(p)$  with the following property 
\begin{itemize}
\item $p$ has stable index equal to $2$. 
 %\item the maximal invariant set $\La$ of $f$ in the closure $\overline{U}$ 
 %admits a partially hyperbolic splitting $E^{cs}\oplus_{<} E^u$ so that $E^u$ 
% is uniformly expanded and dominates $E^{cs}$, and $dim E^{cs}=2$
% \item there is periodic point $p_1$ homoclinically related with $p$ in % $U$, 
 \item there is a stable index $1$
 periodic point $q$ with $\cO(q)\subset U$.
 \item there are  hyperbolic transitive  basic sets $K\subset H(p,U)$ and $L\subset H(q,U)$ 
 containing $p$ and $q$, respectively, so that $K,L$ form a $C^1$-robust heterodimensioanl cycle in $U$
(that is, the orbits of $K$ and $L$ are contained in $U$ 
and there exists heteroclinic points between 
$W^u(K)$ and $W^s(L)$, $W^u(L)$ and $W^s(K)$ whose 
orbit is contained in $U$).
\end{itemize}

then, for every $\nu>0$  there are $g$ arbitrarily $C^1$-close to $f$ having a periodic point $p_2$ with the following properties
\begin{itemize}
 \item $p_2$ has stable index $2$ and  is homoclinically related with $p$ in $U$;
 \item  $p_2$ has a real stable eigenvalue $\lambda^{cs}(p_2)$ with $|\lambda^{cs}|\in [1-\nu,1)$;
 \item $p_2$ has the smallest  Lyapunov exponent so that 
 $$\chi^{ss}(p_2)\in [ \inf\{\chi^{ss}(p),\chi^{ss}(q)\}-\nu,
 \sup\{\chi^{ss}(p),\chi^{ss}(q)\}+\nu] ,$$
 \item the orbit of $p_2$ is $\nu$-dense in the relative homoclinic class $H(p,U,f)$ for $f$.  
\end{itemize}
\end{prop}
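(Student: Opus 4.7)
The plan is to construct $p_2$ as a periodic orbit that shadows a carefully designed itinerary alternating between the two basic sets $K$ and $L$, using the robust heterodimensional cycle to glue the pieces, and then to tune the eigenvalues along its orbit by a Franks-type perturbation.

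First, I would exploit the robust heterodimensional cycle and the $\lambda$-lemma to build, for any large integers $n_K,n_L$ and any periodic orbits $\cO_K\subset K$, $\cO_L\subset L$, a true periodic orbit $\gamma$ of $f$ whose orbit is contained in $U$ and which shadows $\cO_K$ during $n_K$ consecutive iterates and $\cO_L$ during $n_L$ consecutive iterates, with the transitions prescribed by the heteroclinic points of the cycle. Since $K\subset H(p,U)$ and $\gamma$ picks up transverse intersections coming from the cycle, $\gamma$ is homoclinically related to $p$ in $U$ as soon as its stable index is $2$.

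Next I would analyze $Df$ along $\gamma$. On the $n_K$ iterates near $K$ the derivative behaves like $(Df|_K)^{n_K}$, contributing two contracting exponents close to $\chi^{ss}(p)$ and $\chi^{cs}(p)$. On the $n_L$ iterates near $L$ the stable bundle is one-dimensional, so the ``second stable'' direction of $K$ turns into a weakly expanding direction with exponent $\chi^+_L>0$. Hence the weakest exponent of $\gamma$ is essentially the weighted average $\bigl(n_K\,\chi^{cs}(p)+n_L\,\chi^+_L\bigr)/(n_K+n_L)$, which can be made arbitrarily close to $0^{-}$ by tuning $n_K/n_L$; similarly the strongest stable exponent is close to $\bigl(n_K\,\chi^{ss}(p)+n_L\,\chi^{ss}(q)\bigr)/(n_K+n_L)$, lying in the prescribed interval for suitable ratios. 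A small $C^1$-perturbation $g$ of $f$, obtained along $\gamma$ via the Franks-type eigenvalue manipulation tools of \cite{BDP,BB,BGV}, then adjusts the weak eigenvalue to be real and of modulus in $[1-\nu,1)$, while barely moving the strong stable exponent and without breaking the transverse intersections that witness the homoclinic relation between $\gamma$ and $p$ in $U$.

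For the $\nu$-density in the relative homoclinic class $H(p,U,f)$, I would enrich the itinerary before performing the perturbation: pick a $\nu$-net $\{x_1,\dots,x_N\}$ of $H(p,U,f)$, and use the specification property of $K$ together with homoclinic connections available inside $H(p,U)$ to insert into the itinerary of $\gamma$ bounded-length shadowing segments visiting $\nu$-neighborhoods of each $x_i$. Since the inserted segments have uniformly bounded length while $n_K,n_L$ remain free, the weighted averages computed above are unchanged up to an additional error that can be absorbed into $\nu$.

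The main obstacle is Step 2: one has to arrange simultaneously that the weak eigenvalue is real with prescribed modulus $1-\nu'$, that the strong stable exponent stays in the target interval, and that the inserted density segments do not resonate with the eigenvalue perturbation so as to destroy the transverse intersections with $W^u(p)$ and $W^s(p)$. This requires performing the Franks perturbation inside the dominated splitting associated to the cycle, following \cite{BDP,BGV}, and checking that the eigenvalues of the resulting cocycle have no accidental coincidences. Once this is done, the $\lambda$-lemma and the robustness of the cycle yield all four conclusions of Proposition~\ref{p.weak}.
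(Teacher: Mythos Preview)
Your outline captures the right heuristic---spend a fraction of the time near $K$ and a fraction near $L$ so that the weak stable exponent averages to something close to $0^-$---but there is a genuine gap in your first step. You claim that the $\lambda$-lemma together with the robust cycle produces, for $f$ itself, a \emph{true} periodic orbit $\gamma$ shadowing any prescribed $n_K$--$n_L$ itinerary. The $\lambda$-lemma only gives accumulation of invariant manifolds; it does not close up orbits, and a heterodimensional cycle has no specification/shadowing property across the index gap. Obtaining a genuine closed orbit that follows such an itinerary requires a perturbation (the unfolding of the cycle), and that perturbation has to be performed carefully so that it does not destroy the transverse intersections you want to use afterwards to establish the homoclinic relation with $p$. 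Your proposal treats the closing step and the Franks step as independent, but in fact the very act of producing $\gamma$ already consumes a perturbation. A secondary issue is your density argument: specification holds inside the basic set $K$, but the target set is the whole relative class $H(p,U,f)$, which can be strictly larger than $K$; inserting segments that stay in $K$ will not make $\gamma$ $\nu$-dense in $H(p,U,f)$.

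The paper organises the argument differently and avoids both difficulties. It first replaces $p$ by a periodic point $\tilde p$ (obtained inside a large basic set approximating $H(p,U,f)$ in Hausdorff distance) whose orbit is already $\nu/2$-dense in $H(p,U,f)$ and whose strong stable exponent is $\nu$-close to $\chi^{ss}(p)$ with simple spectrum. Then it uses the connecting lemma of \cite{BC} to create an actual heterodimensional cycle between the \emph{points} $\tilde p$ and $q$, and invokes the construction of \cite{ABCDW} (cf.\ \cite{BD2,BDK}) as a black box: a further small perturbation of this cycle yields $p_2$ with stable index $2$, weakest eigenvalue of modulus $1-\nu$, $\chi^{ss}(p_2)$ close to a convex combination of $\chi^{ss}(\tilde p)$ and $\chi^{ss}(q)$, and orbit passing arbitrarily close to $\tilde p$---hence automatically $\nu$-dense in $H(p,U,f)$. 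A last connecting-lemma perturbation supplies the missing transverse intersection needed to homoclinically relate $p_2$ with $p$. In this scheme the closing of the orbit and the control of the weak eigenvalue are handled simultaneously by \cite{ABCDW}, and the density is obtained for free from the choice of $\tilde p$, so no itinerary enrichment is needed.
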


\begin{proof}[Sketch of proof of Proposition~\ref{p.weak}]
The creation of periodic orbits with eigenvalues arbitrarily close to $1$ 
inside a homoclinic class containing a robust heterodimensional 
cycle has been already done in 
\cite{ABCDW}. The proof of Proposition~\ref{p.weak} can be 
carried out in the similar fashion. So we only show the sketch of the 
proof of it.

We fix $\nu > 0$. First,
note that $K$ or $L$ is non trivial since otherwise they 
cannot have robust heterodimensional cycle.
Thus by performing perturbation by 
Hayashi's connecting lemma if necessary, 
we can assume that both of them are not trivial.

Recall that the homoclinic class of $p$ is the 
closure of transverse homoclinic intersection,
hence is the Hausdorff limit of an 
increasing sequence of hyperbolic basic sets. 
The corresponding fact is also true for relative homoclinic classes. 
Therefore one can choose a hyperbolic basic 
set $\tilde K\subset U$ whose Hausdorff distance 
with $H(p,U,f)$ is less that $\nu/10$.
We can also choose an arbitrarily small 
perturbation $f_0$ of $f$ and a periodic point 
$\tilde p\in \tilde K(f_0)$ where $\tilde K(f_0)$ 
is the hyperbolic continuation of $\tilde K$
so that
\begin{itemize}
 \item 
the Hausdorff distance between the orbit $\cO(\tilde p)$ 
and $H(p, U, f_0)$ is less that  $\nu/5$, 
\item $|\chi^{ss}(\tilde p, f_0)-\chi^{ss}(p,f)|<\nu/10$.
\item the Lyapunov exponent $\chi^{ss}(\tilde{p}, f_0)$ has multiplicity $1$, that is, the restriction of  the derivative to the stable 
plane  has two distinct real eigenvalues 
(see section 2 of \cite{ABCDW} or section 4 of \cite{BCDG}).
\end{itemize}

If the perturbation $f_0$ is sufficiently 
close to $f$, then one still has a $C^1$-robust 
heterodimentional cycle associated with $K(f_0)$ and $L(f_0)$ 
(and therefore $\tilde K(f_0)$ with $L(g_0)$).
In other words, by replacing $f$ with $f_0$ and 
$\nu$ with $\nu/2$ , one may assume that
\begin{itemize}
 \item  the orbit of $p$ is $\nu/2$ dense in $H(p,U,f)$
 \item $p$ has two real distinct eigenvalues
\end{itemize}

In the same way, by changing $q$ 
with another periodic point in $L$ and 
performing an arbitrarily 
small perturbation of $f$,  
one may assume that $q$ has 
a real weakest unstable eigenvalue.   

Then, we perform second perturbation
to construct a heterodimensional cycle 
between $p$ to $q$ in $U$ as follows 
(see section 2 of  \cite{ABCDW}):
\begin{itemize}
 \item as $p$ and $q$ belong to the same chain recurrence class, 
an arbitrarily small perturbation 
 (using for instance  the connecting lemma in \cite{BC}) allows 
to create a transverse intersection between 
 $W^u(q)$ and $W^s(p)$.
 \item  as the $C^1$-robust cycle persits under the first
 perturbation, $p$ and $q$ still belong to the same class
 so that an arbitrarily small perturbation, preserving the first
 intersection, allows to create a transverse intersection between 
 $W^s(q)$ and $W^u(p)$. 
\end{itemize}

Now \cite{ABCDW} (see section 3 of \cite{ABCDW},
see also \cite{BD2}, \cite{BDK}) 
tells us that 
by performing an arbitrarily small perturbation 
to the heterodimensional cycle as above, 
we can create a periodic point $p_2$ with the following property
(we denote the perturbed diffeomorphism by $g$): 
\begin{itemize}
 \item $p_2$ has a stable index $2$
 \item $p_2$ has a weakest stable eigenvalue $\lambda^{cs}$ 
with absolute value $|\lambda^{cs}| = 1-\nu <1$
 \item the orbit of $p_2$ passes arbitrarily close to $p_g$ 
 (as a consequence $\cO(p_2)$ will be $\nu/2$ dense in $H(p,U,f)$
 \item $\chi^{ss}(p_2)$ is arbitrarily close to a convex sum of $\chi^{ss}(p, f)$ and $\chi^{ss}(q, f)$
 \item the unstable manifold of $p_2$ cuts transversely 
the stable manifold of $p$ and the stable manifold of 
 $p_2$  cuts transversely the unstable of $q$. 
\end{itemize}

The last item implies that $p_2$ and  $p$  are  robustly 
in the same chain recurrence class (since we can always 
find a pseudo-orbit from $q$ to $p$ following the robust 
heterodimensional cycle between $K$ and $L$): 
a new arbitrarily small perturbation by connecting lemma 
in \cite{BC} creates 
a transverse intersection between the stable manifold of $p$
with the unstable of $q$, ending the proof.

\end{proof}

% !TEX root =  flexible_01.tex

\subsection{Weak eigenvalues, complex eigenvalues, and flexible points}

The aim of the remaining part of the section is the proof of the next proposition. 

\begin{prop}\label{p.weak-flexible}
Given $C>1$, $\chi<0$ and $\varepsilon>0$, 
there exists $\nu \in (0, 1)$ with the following property:
Let $f \in \mathrm{Diff}^1(M)$ be a diffeomorphism and 
$\Lambda$ be an compact invariant hyperbolic basic set 
of $f$ with stable index two
such that $\|Df\|$ and $\|Df^{-1}\|$ are bounded by $C$ over $\Lambda$. 

Suppose that $\Lambda$ contains a periodic point $q$
(of stable index two)
 with complex (non-real) stable eigenvalue and
a point $p$ having two distinct real stable eigevalues so that
\begin{itemize}
 \item  the smallest Lyapunov exponent of $p$ is less than $\chi<0$
 \item the stable eigenvalue with larger absolute value
 $\lambda^{cs}$ satisfies $|\lambda^{cs}|\in (1-\nu,1)$.
\end{itemize}

Then $f$ admits arbitrarily small perturbations with 
$\varepsilon$-flexible points contained the continuation of $\Lambda$
and the $\varepsilon$-neighborhood of $\mathcal{O}(x_L)$
contains $\mathcal{O}(p)$ in the Hausdorff topology.
\end{prop}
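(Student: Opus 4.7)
The plan is to construct, by an arbitrarily small $C^1$-perturbation $g$ of $f$, a single periodic orbit in the hyperbolic continuation of $\Lambda$ whose stable cocycle is $\varepsilon$-flexible. Two freedoms will be combined: (i) the Bonatti--D\'\i az--Pujals rotation trick coming from the complex stable eigenvalues of $q$, which allows any angular rotation of the stable-plane cocycle at per-matrix cost $O(1/k_q)$ when spread over $k_q$ periods of $q$; and (ii) the smallness of $1-|\lambda^{cs}|$ at $p$, which by Franks-style spreading over $k_p$ periods of $p$ permits independent adjustment of the two stable eigenvalues of a shadowing orbit at a small per-matrix cost. Using specification/shadowing in the basic set $\Lambda$ and the Anosov closing lemma, I would build, for $k_p$ and $k_q$ to be chosen large, a periodic orbit $\mathcal{O}(x)$ of a small perturbation $g$ of $f$ which stays in the continuation of $\Lambda$, spends $k_p$ periods shadowing $\mathcal{O}(p)$, spends $k_q$ periods shadowing $\mathcal{O}(q)$ (with short connecting segments), and passes within $\varepsilon$ of every point of $\mathcal{O}(p)$, giving the Hausdorff-density clause.

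Next I would choose a trivialization of $E^s$ along $\mathcal{O}(x)$ that diagonalizes $Df^{n(p)}|_{E^s(p)}$ on the $p$-segments and puts $Df^{n(q)}|_{E^s(q)}$ into the complex normal form $\rho R_\theta$ on the $q$-segments. Up to small connecting terms (absorbed into a fixed basis change), the return matrix is then close to
\begin{equation*}
A \;\approx\; \rho^{k_q} R_{\psi_0}\cdot \mathrm{diag}\!\left(\lambda^{ss}(p)^{k_p},\,(\lambda^{cs})^{k_p}\right),
\end{equation*}
with $\psi_0=k_q\theta$ plus a fixed basis angle. I would now define the flexibility path $\mathcal{A}_t$, $t\in[-1,1]$, by a continuous deformation of the individual matrices of the cocycle along $\mathcal{O}(x)$ such that, at the level of the return matrix, $A_t$ traces the following trajectory in $\mathrm{GL}(2,\mathbb{R})$: for $t\in[0,1]$, only the $p$-freedom is used, driving the larger diagonal entry from $(\lambda^{cs})^{k_p}$ up the positive real axis to $\rho^{-k_q}$, so that $A_1$ has a real eigenvalue equal to $1$; for $t\in[-1,0]$, the rotation $\psi_0$ is rotated to $0$ via the BDP freedom while simultaneously the two diagonal entries are equalized via the $p$-freedom, so that $A_{-1}$ becomes a scalar matrix $\mu\,I$ with $\mu<1$.

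Verifying the definition of $\varepsilon$-flexibility then reduces to three routine checks: that each per-matrix perturbation has norm $<\varepsilon$ at every $t$, which is achievable by taking $k_p,k_q$ large and $\nu$ small so that the cumulative target displacements are diluted over many matrices; that for $t\in(-1,1)$ the return matrix stays in the open set of contracting saddles with two distinct positive real eigenvalues, which is automatic once the path is chosen to move within diagonal matrices with positive entries on the real axis (once the rotation has been trivialized); and that the smallest eigenvalue remains uniformly $<1$, which is inherited from the strong stable contraction at $p$ (bounded by $\chi<0$) and from $\rho^{k_q}<1$.

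The main obstacle is the balance of scales in this first check: the homothety endpoint requires closing an eigenvalue-log-gap of order $k_p\,|\!\log(\lambda^{cs}/\lambda^{ss})|$, which forces $k_p$ large; but the eigenvalue-$1$ endpoint requires $(\lambda^{cs})^{k_p}\rho^{k_q}$ to stay close to $1$, which forces $k_p\nu$ and $k_q(1-\rho)$ small; and the rotation cost $O(1/k_q)$ must also be below $\varepsilon$. The quantitative hypotheses $\|Df^{\pm 1}\|\leq C$ on $\Lambda$ and $\chi^{ss}(p)<\chi$ are exactly what is needed to fix $k_q$ and $k_p$ in terms of $(\varepsilon,C,\chi)$; once they are fixed, $\nu=\nu(C,\chi,\varepsilon)$ can then be chosen small enough to satisfy the remaining inequality, which is where the statement's dependence $\nu=\nu(C,\chi,\varepsilon)$ comes from.
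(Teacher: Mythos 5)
Your overall architecture is the right one (shadow $p$- and $q$-blocks, use the rotation freedom from $q$'s complex stable eigenvalue, use the weak eigenvalue $\lambda^{cs}$ to push one eigenvalue to $1$), and the $t\in[0,1]$ half of the path matches what the paper does. But there is a genuine gap in your construction of the $t\in[-1,0]$ path to a homothety, and it is not a parametrization detail.

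You propose to reach the homothety endpoint by ``equalizing the two diagonal entries via the $p$-freedom,'' i.e.\ by a hyperbolic rescaling spread over the $p$-block. The log-gap between the two eigenvalues of $P^{k_p}$ grows like $k_p\,\pi(p)\,|\chi^{ss}-\chi^{cs}|\geq k_p\,\pi(p)\,(|\chi|-O(\nu))$, while the number of matrices in the $p$-block over which you can spread it is $k_p\,\pi(p)$. The per-matrix cost therefore does \emph{not} go to zero as $k_p\to\infty$: it is bounded below by roughly $|\chi|$, which is one of the fixed constants in the statement. Taking $k_p$ large does not help, contrary to what your ``obstacle'' paragraph suggests. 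To make the per-matrix cost small by spreading you would have to distribute the hyperbolic correction over the $q$-block as well (and first turn the $q$-return into a genuine homothety via the analogue of Lemma~\ref{l.homoth}, otherwise the rotation and the hyperbolic rescaling fail to commute and the blocks don't telescope); none of this appears in your proposal. A second difficulty, which you dismiss as ``automatic,'' is that the requirement of two distinct \emph{positive real} eigenvalues for all $t\in(-1,1)$ is not free: as the diagonal entries approach each other, the discriminant condition forces the residual rotation angle to vanish at a faster rate, and this relation must be exhibited.

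The paper sidesteps both problems by using a periodic orbit whose stable cocycle concatenates \emph{two} $p$-blocks and two $q$-blocks, $T_2Q^{l_4}T_1P^{l_3}T_2Q^{l_2}T_1P^{l_1}$ with $l_1=l_3$, after Lemma~\ref{l.homoth} has turned the $q$-return into a homothety $Q$ and Lemma~\ref{l.annihi} has killed the transition matrices. The path $t\in[-1,0]$ then consists only of small rotations inserted into the $Q$-blocks, which at $t=-1$ produce $R(-\pi/2)P^{l_1}R(\pi/2)P^{l_3}=\lambda_1^{l_1}\lambda_2^{l_1}\mathrm{Id}$: the $\pi/2$-rotation sandwiched between the two equal $p$-blocks swaps the eigendirections, so the product is automatically a homothety with no eigenvalue rescaling at all. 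Lemma~\ref{l.homoper} then checks by direct computation that the eigenvalues stay distinct, positive and contracting for $t\in(-1,0]$. You also omit the basis-bound issue (Lemma~\ref{l.diag}): the change of coordinates diagonalizing $Df^{\pi(p)}|_{E^s}$ must be controlled in norm using $\|Df^{\pm1}\|\leq C$ and $\chi^{ss}(p)<\chi$, otherwise ``small'' perturbations in the diagonalizing frame need not be small in the ambient metric.
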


This proposition, together with the previous Proposition
\ref{p.weak} with standard genericity argument
(involving the generic continuity of the homoclinic classes 
with respect to the Hausdorff distance)
implies Theorem~\ref{t.aboundance}.

The main ingredient of the proof is the following fact:
in a basic sets, given a set of finite number of periodic points,
one may choose a periodic point which travels around these 
periodic points with the predetermined itinerary. 
By choosing a convenient itinerary, we can find a periodic 
point whose differential behaves in the way which is
very close to what we want. Thus,  by adding some perturbation, 
we can obtain the desired orbit. 
This technique has been formalized in \cite{BDP} by 
the notion of \emph{transition}.

\subsection{Transitions on the periodic points}\label{sb_transitions}

In this subsection we will extract 
some consequence from \cite{BDP} 
which will be useful for us. 
For the proof of Lemma \ref{l.transition1}, 
see Lemma 1.9 of  \cite{BDP} (indeed, 
Lemma \ref{l.transition1} is just the special 
case of Lemma 1.9 of  \cite{BDP}).

\begin{lemm}\label{l.transition1}
Suppose $f \in \mathrm{Diff}^1(M)$ have  hyperbolic basic set $\La$
with stable index $2$. Let $T\Lambda = E^s \oplus E^u$ 
be the hyperbolic splitting (thus $\dim E^s =2$). 
We fix coordinate on $\La|_{E^s}$ so that
and take the matrix representation of $df$. 
Let  $x_1,x_2\in \La$ be two hyperbolic periodic saddle points 
of period  $\pi_i$ $(i = 1, 2)$, respectively. 

Then, given $\varepsilon_2 >0$, there exists two finite sequence of 
matrices in $\mathrm{GL}(2, \mathbb{R})$
$(T_i^j)$ ($j=0,\ldots, j_i-1$)  ($i = 1, 2$) (where $j$ denotes the suffix, not the 
power of the matrix), with the following property: 

For any  $L =(l_1,l_2,l_3,l_4)\in \mathbb{N}^4$
satisfying $(l_1, l_2) \neq (l_3, l_4)$, 
there exists a periodic point $x_L \in \Lambda$ such that the following:
\begin{itemize}
\item The period of $x_L$ is $(l_1 +l_3)\pi_1 + (l_2 +l_4) \pi_2 + 2(j_1 +j_2)$.
\item If $k=K$ with $0 \leq K < \pi_1 l_1$ or 
$k=l_1 \pi_1 + l_2 \pi_2 + j_1 +j_2 + K$ with $0 \leq K < \pi_1 l_3$, then 
$Df|_{E^{s}}(f^k(x_L))$ is $\varepsilon_2$-close to 
$Df|_{E^{s}}(f^{k}(x_1))$.
%where $K'$ is the reminder of $K$ divided by $\pi_1$.  
\item If $k= l_1\pi_1+ j_1+ K$ with $0 \leq K < \pi_2 l_2$ or 
$k=(l_1+l_3) \pi_1 + l_2 \pi_2 + 2j_1  + K$
with $0 \leq K < \pi_2 l_4$,  then
$Df|_{E^{s}}$ is $\varepsilon_2$-close to 
$Df|_{E^s}(f^{K}(x_2))$.
%where $K'$ is the reminder of $K$ divided by $\pi_2$.  
\item If $k= l_1\pi_1 + K$ or
$k= (l_1+l_3)\pi_1 + l_2\pi_2 + K$ with 
$0 \leq K < j_1$,  then
$Df|_{E^{s}}(f^k(x_L))$ is $\varepsilon_2$-close to $T_1^{K}$.
\item If $k= l_1\pi_1 + l_2\pi_2 + j_1  + K$ or
$k= (l_1+l_3)\pi_1 + (l_2+l_4)\pi_2 + 2j_1 +j_2 + K$ 
with $0 \leq K < j_2$, then 
$Df|_{E^{s}}(f^k(x_L))$ is $\varepsilon_2$-close to $T_2^{K}$.
\end{itemize}
We put $T_i := \prod_{j=0}^{j_i-1} T_{i}^{j}$ and call them \emph{transition matrices}. 
\end{lemm}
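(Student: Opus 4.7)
The plan is to build the periodic point $x_L$ by the Anosov--Bowen shadowing lemma, starting from a pseudo-orbit obtained by concatenating long stays near $\mathcal{O}(x_1)$ and $\mathcal{O}(x_2)$ glued by fixed heteroclinic transition segments. Since $\Lambda$ is a hyperbolic basic set it is topologically transitive, so $W^u(\mathcal{O}(x_1))$ meets $W^s(\mathcal{O}(x_2))$ transversally, and conversely for $W^u(\mathcal{O}(x_2))$ and $W^s(\mathcal{O}(x_1))$. I would pick one heteroclinic orbit $\tau_1$ from $\mathcal{O}(x_1)$ to $\mathcal{O}(x_2)$ and one $\tau_2$ from $\mathcal{O}(x_2)$ to $\mathcal{O}(x_1)$, and fix integers $j_1,j_2$ large enough so that a length-$j_i$ segment of $\tau_i$ starts within some small $\delta>0$ of $\mathcal{O}(x_i)$ and ends within $\delta$ of $\mathcal{O}(x_{3-i})$. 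The transition matrices $T_i^j$ are then simply $Df|_{E^s}$ evaluated along the chosen length-$j_i$ segment of $\tau_i$, read in the fixed frame of $E^s$ over $\Lambda$.

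Given $L=(l_1,l_2,l_3,l_4)$, the pseudo-orbit is formed by the cyclic concatenation
\[
\underbrace{\mathcal{O}(x_1)^{l_1}}_{l_1\pi_1},\
\underbrace{\tau_1}_{j_1},\
\underbrace{\mathcal{O}(x_2)^{l_2}}_{l_2\pi_2},\
\underbrace{\tau_2}_{j_2},\
\underbrace{\mathcal{O}(x_1)^{l_3}}_{l_3\pi_1},\
\underbrace{\tau_1}_{j_1},\
\underbrace{\mathcal{O}(x_2)^{l_4}}_{l_4\pi_2},\
\underbrace{\tau_2}_{j_2}.
\]
By construction, each transition between consecutive blocks moves by at most $\delta$, so this is a periodic $\delta$-pseudo-orbit of total length $(l_1+l_3)\pi_1+(l_2+l_4)\pi_2+2(j_1+j_2)$. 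Choosing $\delta$ smaller than the shadowing constant associated to $\varepsilon_2/2$, the Anosov--Bowen shadowing lemma for the basic set $\Lambda$ yields a periodic orbit $x_L\in\Lambda$ which $(\varepsilon_2/2)$-shadows the pseudo-orbit. The hypothesis $(l_1,l_2)\neq(l_3,l_4)$ prevents the pseudo-orbit (and hence the shadowing orbit) from admitting a proper subperiod, so the minimal period of $x_L$ is exactly the claimed value.

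To deduce the four bullet points, I would invoke uniform continuity of $Df|_{E^s}$ on a compact neighborhood of $\Lambda$, which is available because $\|Df^{\pm 1}\|$ is bounded and the stable bundle extends continuously. After shrinking the shadowing scale, $Df|_{E^s}(f^k(x_L))$ expressed in the fixed frame becomes $\varepsilon_2$-close to $Df|_{E^s}$ evaluated at the corresponding pseudo-orbit point. Depending on the index $k$, the latter is either of the form $f^K(x_1)$, of the form $f^K(x_2)$, or a point on the transition segment $\tau_1$ or $\tau_2$ whose derivative is $T_1^K$ or $T_2^K$ by definition; this matches the four cases exactly.

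The main difficulty is bookkeeping of the coordinates on $E^s$: \emph{a priori}, $E^s_{f^k(x_L)}$ and $E^s_{f^k(x_i)}$ live over different base points, so ``closeness'' of the restricted derivatives only makes sense once one trivializes $E^s$ continuously in a small neighborhood of $\Lambda$. Once such a frame is fixed, shadowing and uniform continuity of the trivialization produce the required matrix estimate; this is exactly the technical content of Lemma~1.9 of \cite{BDP}, and we only need to invoke it in the stated form. A secondary technicality is that the heteroclinic orbits $\tau_i$ a priori lie outside $\Lambda$; one uses that the closure of the union of $\Lambda$ with finitely many heteroclinic orbits is still uniformly hyperbolic, so the shadowing estimate applies to the enlarged invariant set without loss.
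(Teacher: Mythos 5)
The paper gives no proof of this lemma; it defers entirely to Lemma~1.9 of \cite{BDP}, noting that the present statement is a special case. Your shadowing sketch reconstructs the standard argument behind that result, so your approach is essentially the intended one.

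One small correction to your closing remark: rather than working in an enlargement $\Lambda\cup\tau_1\cup\tau_2$, you should choose the heteroclinic orbits $\tau_i$ inside $\Lambda$ from the outset. Since $\Lambda$ is a transitive, locally maximal hyperbolic set containing $\mathcal{O}(x_1)$ and $\mathcal{O}(x_2)$, such heteroclinic orbits exist inside $\Lambda$: shadow a pseudo-orbit that follows $\mathcal{O}(x_1)$ in backward time, crosses along a dense orbit segment of $\Lambda$, and then follows $\mathcal{O}(x_2)$ in forward time, all blocks taken in $\Lambda$; the shadowing orbit stays in the isolating neighborhood and so lies in $\Lambda$. This is not merely cosmetic. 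If the $\tau_i$ were allowed to leave the isolating neighborhood of $\Lambda$, the periodic orbit $x_L$ produced by shadowing would have no reason to lie in $\Lambda$ (enlarging $\Lambda$ by a finite set of heteroclinic orbits gives a hyperbolic set with shadowing, but the resulting $x_L$ would a priori live in that enlarged set, not in $\Lambda$). With $\tau_i\subset\Lambda$ the entire pseudo-orbit is in $\Lambda$, hence so is $x_L$ by local maximality, as the lemma requires.
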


 \begin{rema}\label{r.distance}
 In the above Lemma, by adjusting $L$ we can control the 
 position of the orbit of $x_L$. More precisely,
 if we take $l_1$ or $l_3$ (resp. $l_2$ or $l_4$) very large, 
 then $x_L$ passes arbitrarily close to $x_1$ (resp. $x_2$).
 See {\normalfont \cite{BDP}} for detail.
 \end{rema}

\subsection{Rudimentary results from linear algebra}\label{sb_linear}
We collect two results from linear algebra, which will be 
used in the proof of Proposition \ref{p.weak-flexible}.

First, we prove the following lemma.
\begin{lemm}\label{l.annihi}
Let $T \in \mathrm{GL}_{+}(2, \mathbb{R})$
and $Q$ be a contracting homothety (i.e., $Q = \lambda \mathrm{Id}$
where $\lambda$ satisfying $0 < \lambda < 1$).
Then, given $\varepsilon > 0$
exists $h>0$ such that the sequence of matrix $(J_i)$  (resp. $(L_i)$)
($i = 0, \ldots, h-1 $) such that 
\begin{itemize}
\item each $J_i$ (resp. $L_i$) is $\varepsilon$-close to $Q$.
\item The product $T (\prod_{i=0}^{h-1} J_i)$ 
(resp. $(\prod_{i=0}^{h-1} L_i) T$ ) is a contracting homothety.
\end{itemize}
%\item Let $T \in \mathrm{GL}_{-}(2, \mathbb{R})$
%and $P$ be a contracting homothety (i.e., $H = \lambda \mathrm{Id}$
%where $\lambda$ satisfying $0 < \lambda < 1$).
%Then, given $\varepsilon > 0$
%exists $h>0$ such that the sequence of matrix $(J_i)$ 
%($i = 1, \ldots, h $) such that 
%\begin{itemize}
%\item each $J_i$ is $\varepsilon$-close to $H$.
%\item The product $T(\prod J_i)$ is equal to the following matrices.
%\[
%\lambda \begin{pmatrix}
%1 & 0 \\ 0 & -1
%\end{pmatrix},  \,\, (\mbox{or} \,\,\,\,
%\lambda \begin{pmatrix}
%-1 & 0 \\ 0 & 1
%\end{pmatrix}).
%\]
%\end{itemize}
\end{lemm}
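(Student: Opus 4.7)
The observation that makes the proof straightforward is that $Q = \lambda\,\mathrm{Id}$ is central, so writing $J_i = \lambda M_i$ converts the size constraint $\|J_i - Q\| < \varepsilon$ into $\|M_i - \mathrm{Id}\| < \varepsilon/\lambda$, while the target identity $T\prod_i J_i = \mu\,\mathrm{Id}$ becomes $T\prod_i M_i = (\mu/\lambda^h)\,\mathrm{Id}$, that is, $\prod_i M_i = c\,T^{-1}$ for some scalar $c > 0$ we are free to choose. Taking $c = 1$, the resulting contraction rate $\mu = \lambda^h$ is automatically in $(0,1)$, and the whole lemma reduces to writing $T^{-1} \in \mathrm{GL}_+(2,\mathbb{R})$ as a product of arbitrarily many matrices arbitrarily $\delta$-close to $\mathrm{Id}$, where $\delta := \varepsilon/\lambda$.

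To realise such a factorisation I would use the polar decomposition $T^{-1} = R_\theta\cdot P$, where $R_\theta \in SO(2)$ is a rotation and $P$ is symmetric positive definite (both factors exist and are unique because $\det T^{-1} > 0$). Both lie in the image of the matrix exponential: $R_\theta = \exp(Y)$ for an antisymmetric $Y$, and $P = \exp(Z)$ for $Z = \log P$, a well-defined symmetric matrix. The expansion $\exp(X/h) = \mathrm{Id} + X/h + O(\|X\|^2/h^2)$ guarantees that, for $h_1, h_2$ large enough, $\|\exp(Y/h_1) - \mathrm{Id}\|$ and $\|\exp(Z/h_2) - \mathrm{Id}\|$ are both less than $\delta$. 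Setting $h := h_1 + h_2$ and
\[
M_i := \exp(Y/h_1) \text{ for } 0 \le i < h_1, \qquad M_{h_1 + j} := \exp(Z/h_2) \text{ for } 0 \le j < h_2,
\]
one has $\prod_{i=0}^{h-1} M_i = \exp(Y)\exp(Z) = R_\theta P = T^{-1}$, so the matrices $J_i := \lambda M_i$ are each $\varepsilon$-close to $Q$ and satisfy $T \prod_i J_i = \lambda^h\,\mathrm{Id}$, which is a contracting homothety since $0 < \lambda < 1$.

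For the sequence $(L_i)$ the argument is symmetric: one reduces to expressing $T^{-1}$ as a product of near-identity matrices placed on the left of $T$, and the same polar-decomposition factorisation does the job, yielding $(\prod_i L_i) T = \lambda^h\,\mathrm{Id}$. I do not see any serious obstacle; the only subtle point is that one \emph{must} split $T^{-1}$ into two exponentials rather than one, because a single real exponential is insufficient to reach all of $\mathrm{GL}_+(2,\mathbb{R})$ --- a matrix conjugate to $\mathrm{diag}(-a,-b)$ with $0 < a \ne b$, for instance, is not in the image of $\exp$ --- whereas the product of a rotation exponential and a symmetric-positive-definite exponential covers the full group.
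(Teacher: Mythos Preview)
Your argument is correct and rests on the same core observation as the paper's proof: since $Q$ is central, one may write $J_i=\lambda M_i$ and reduce the problem to factoring $T^{-1}$ as a product of matrices arbitrarily close to the identity, after which $T\prod_i J_i=\lambda^h\,\mathrm{Id}$ is automatically a contracting homothety.

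Where you differ is in the implementation of that factorisation. The paper argues abstractly: pick any continuous path $I(t)$ in $\mathrm{GL}_+(2,\mathbb R)$ from $T$ to $\mathrm{Id}$, use compactness to choose a fine subdivision, and set $J_k=I((k{+}1)/h)\,I(k/h)^{-1}Q$; the telescoping product collapses to $TQ^hT^{-1}=Q^h$. You instead construct an explicit path by polar decomposition, writing $T^{-1}=R_\theta P=\exp(Y)\exp(Z)$ and taking $h$th-root factors along the two one-parameter subgroups. Your route is more concrete and yields an explicit $h$ in terms of $\|Y\|,\|Z\|$, while the paper's is shorter and uses only path-connectedness; both are equally valid, and your remark about why a single real exponential is insufficient is a pleasant aside. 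One cosmetic point: make sure your ordering of the $M_i$ matches whatever product convention is in force (the paper's $\prod$ is composition right-to-left), but this only amounts to swapping the two blocks or using the opposite polar decomposition $T^{-1}=P'R_{\theta'}$.
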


\begin{proof}
We only give the proof of the existence of $(J_i)$. 
The proof of $(L_i)$ can be carried out similarly.

Let $T$, $Q$, and $\varepsilon$ be given. 
First, given $Q$, we fix $\delta >0$ such that the following 
holds: If $X \in \mathrm{GL}_{+}(2, \mathbb{R})$ is 
$\delta$-close to $\mathrm{Id}$, then $Q$ and $XQ$ are 
$\varepsilon$-close. We can fix such $\delta$ because of the 
continuity of the multiplication.

For every $T \in \mathrm{GL}_{+}(2, \mathbb{R})$, 
there is a continuous path $I(t)$ in $\mathrm{GL}_{+}(2, \mathbb{R})$
such that $I(0) = T$ and $I(1) = \mathrm{Id}$
(since $\mathrm{GL}_{+}(2, \mathbb{R})$ is path-connected).
Then, because of the compactness of the path, 
we can take a sufficiently large integer $m >0$ such that 
the following holds: for every $k$  $(0 \leq k < m-1$),
$I((k+1)/m)(I(k/m))^{-1}$ is $\delta$-close to the identity.
We put $h = m$ and $J_k = I((k+1)/m) \cdot (I(k/m))^{-1}Q$.
Since $Q$ is a homothethy, we have
\begin{eqnarray*}
T \prod_{k=0}^{h-1}(J_k) 
&=& T\prod_{k=0}^{h-1} \left( I((k+1)/h) \cdot (I(k/h))^{-1}Q \right)  \\
&=& TQ^h \prod_{k=0}^{h-1} I((k+1)/h\cdot(I(k/h))^{-1}  \\
&=& TQ^h \cdot I (1) \cdot (I (0))^{-1} = Q^h.   
\end{eqnarray*}
This completes the proof.
\end{proof}

Let us see the second Lemma.
By $R(\theta)$ we denote the rotation matrix of angle $\theta$,
more precisely, we put
\[
R(\theta) := 
\begin{pmatrix}
\cos \theta & - \sin \theta \\
\sin \theta              &  \cos \theta
\end{pmatrix}.
\]

We need to check the following:
\begin{lemm}\label{l.homoper}
Let $1> \lambda_1>\lambda_2 >0$. Then, for $0<t<1$, 
The matrix
\[
M(t) :=
R(-t)
\begin{pmatrix}
\lambda_1 & 0 \\
0              &  \lambda_2
\end{pmatrix}
R(t)
\begin{pmatrix}
\lambda_1 & 0 \\
0              &  \lambda_2
\end{pmatrix}
\]
satisfies the following properties:
\begin{itemize}
\item For $0 \leq t < \pi /2$, $M(t)$ has two distinct 
positive contracting eigenvalues.
\item For $t = \pi /2$, $M(t)$ is a homothetic contraction.
\end{itemize}
\end{lemm}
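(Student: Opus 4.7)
The plan is to reduce everything to an explicit matrix computation, exploiting the fact that $M(t)$ is a $2\times 2$ matrix, so its eigenvalues are determined by its trace and determinant. Write $D=\mathrm{diag}(\lambda_1,\lambda_2)$ so $M(t) = R(-t)\,D\,R(t)\,D$. First I would compute $\det M(t) = \det(D)^2 = \lambda_1^2\lambda_2^2$, which is independent of $t$ and strictly less than $1$. Next I would carry out the multiplication to obtain
\[
\mathrm{tr}\, M(t) \;=\; (\lambda_1^2+\lambda_2^2)\cos^2 t \;+\; 2\lambda_1\lambda_2\sin^2 t \;=\; (\lambda_1-\lambda_2)^2\cos^2 t \;+\; 2\lambda_1\lambda_2,
\]
which is strictly positive.

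The discriminant of the characteristic polynomial then factors very cleanly as
\[
(\mathrm{tr}\,M(t))^2 - 4\det M(t) \;=\; (\lambda_1-\lambda_2)^2\cos^2 t\,\Bigl[(\lambda_1-\lambda_2)^2\cos^2 t + 4\lambda_1\lambda_2\Bigr].
\]
For $0\le t<\pi/2$ both factors are strictly positive, so $M(t)$ has two distinct real eigenvalues; positivity of both eigenvalues follows from the positive determinant together with positive trace. For $t=\pi/2$ the discriminant vanishes, so $M(\pi/2)$ has a repeated eigenvalue, and a direct computation (using $R(\pm\pi/2)D R(\mp\pi/2) = \mathrm{diag}(\lambda_2,\lambda_1)$) gives $M(\pi/2) = \lambda_1\lambda_2 \,\mathrm{Id}$, a contracting homothety since $\lambda_1\lambda_2<1$.

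The only genuinely substantive step is checking that the eigenvalues are strictly less than $1$ for $0\le t <\pi/2$. Since both eigenvalues are positive with product $\lambda_1^2\lambda_2^2<1$, it suffices to control the larger one, which amounts to $\mathrm{tr}\,M(t) < 1+\det M(t)$. Substituting the expressions above and rearranging reduces this to
\[
(\lambda_1-\lambda_2)^2\cos^2 t \;<\; (1-\lambda_1\lambda_2)^2,
\]
equivalently $(\lambda_1-\lambda_2)|\cos t|<1-\lambda_1\lambda_2$. Since $|\cos t|\le 1$, it is enough to verify $\lambda_1-\lambda_2<1-\lambda_1\lambda_2$, which rewrites as $\lambda_1-\lambda_2(1-\lambda_1)<1$ and follows immediately from $\lambda_1<1$ and $\lambda_2>0$.

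I do not anticipate a genuine obstacle: the lemma is a routine two-by-two calculation. The only thing to be careful about is the sign conventions in the rotation matrices and the book-keeping in the factorization of the discriminant, so that both bullets fall out of the same formula. The conjugation-like structure $R(-t)DR(t)$ is what makes the trace a simple trigonometric interpolation between $\lambda_1^2+\lambda_2^2$ (at $t=0$) and $2\lambda_1\lambda_2$ (at $t=\pi/2$), which is the reason the discriminant vanishes exactly at $t=\pi/2$ and the matrix becomes a homothety there.
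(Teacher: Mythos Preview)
Your proof is correct and follows essentially the same approach as the paper: compute $\det M(t)=(\lambda_1\lambda_2)^2$ and $\mathrm{tr}\,M(t)=(\lambda_1^2+\lambda_2^2)\cos^2 t+2\lambda_1\lambda_2\sin^2 t$, then analyze the characteristic polynomial. In fact you carry out more of the computation than the paper does---the paper simply notes the trace is monotone on $[0,\pi/2]$ and says ``by a direct calculation we can check the desired properties,'' whereas you factor the discriminant explicitly and verify the contracting condition $\mathrm{tr}\,M(t)<1+\det M(t)$ in full.
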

\begin{proof}
Both of items can be checked by calculating the characteristic 
polynomial of $M(t)$ directly. Indeed, it is given by 
$x^2 -\mathrm{tr}(M(t)) x + \det (M(t))$.
Note that $\det (M(t))$ is equal to $(\lambda_1\lambda_2)^2$ 
(independent of $t$). By a direct calculation, we can see 
that  $\mathrm{tr}(M(t))$ is equal to
\[
(\lambda_1^2 +\lambda_2^2)\cos^2 t + 2\lambda_1 \lambda_2\sin^2 t.
\]
One can check that  this value is monotone
decreasing on $[0, \pi /2]$.
Then, by a direct calculation we can check
the desired properties of the path above. 
\end{proof}

\subsection{Proof of Proposition~\ref{p.weak-flexible}}\label{sb_th3}

Let us start the proof of Proposition~\ref{p.weak-flexible}.
For that we use Lemma~\ref{l.homoper}, which explains 
the behavior of products of rotation matrices and diagonal 
matrices. To use it, we need to take convenient bases on 
tangent spaces of our basic set. First let us see 
how to fix such bases.

\subsubsection{Diagonalizing coordinates}
In  the assumption of Proposition~\ref{p.weak-flexible},
the basic set $\La$ contains a periodic point $p$ with 
distinct real eigenvalues: 
one eignevalue is very close to $\pm 1$ but the smallest 
Lyapunov exponent is bounded away from $0$.  
In particular, the restriction of the differential $Df^\pi(p)$,
where $\pi$ is the period of $p$,  
to the stable plane is diagonalisable. 
We want to take the pair of eigenvectors as 
the basis of stable tangent directions along the orbit of $p$, 
but such a coordinate change may be big (with respect to 
the metric induced from the original Riemannian metric)
and the seemingly small perturbations looked through 
the diagonalized coordinate can be very big.
Thus, let us consider the size of such coordinate change.

To clarify our argument, we generalize the situation as follows.
Let $A_i : V_i \to V_{i+1}$ $(i \in \mathbb{Z}/\pi \mathbb{Z})$ 
be a sequence of isomorphims between the sequence of 
two dimensional Euclidean vector spaces $V_i$.
Suppose the product $A_{\pi-1}  \cdots  A_0$ is diagonalizable.
Thus, on each $V_i$, there are images of two
eigenspaces 
$U_i$ and $W_i$ of dimension one.
We assume that $U_i$ is the strong contracting 
direction.

For each $V_i$,  we fix 
an orthonormal basis $\langle e_{1, i}, e_{2, i} \rangle$
so that $U_i \wedge W_i$ and $(e_{1, i})\wedge (e_{2, i})$
defines the same orientation. Then, 
let $B_i\in \mathrm{SL}(2,\RR)$ be the matrix 
representing the change of the basis  from the orthonormal 
one to the one having one unit vector on the most 
contracted eigendirection.
More precisely, $B_i:V_i \to V_i $ is the (unique) matrix in $\mathrm{SL}(2,\RR)$   
satisfying $B_i(e_{1, i}) \in U_i$ and $\| B_i(e_{1, i}) \| =1$
(we take the matrix representation regarding 
$\langle e_{1, i}, e_{2, i} \rangle$ to be the standard basis).
Up to multiplication by an orhtogonal matrix
(derives from the ambiguity of the choice of initial orthonormal 
basis), 
$B_i$ is well defined. In particular, $\|B_i\|$ is well defined. 

The value $\|B_i\|$ measures the angle between 
two eigendirections on $V_i$
in the sense that
it is a strictly decreasing function of the angle.
The norm $\|B_i\|$ is equal to $1$ when two eigendirections 
are orthogonal
and diverges to $+\infty$ as the angle tends to $0$
(this is easy to prove, so we omit it).

% Recall that, for matrices in  $SL(2,\RR)$ the norm of the matrix 
% and of its inverse are the same.  

%\begin{lemm}\label{l.diag} In the statement of Proposition~\ref{p.weak-flexible}, we fix some bound $C$ for 
%$\|Df^{\pm 1}\|$ , $\chi<0$ and $\varepsilon>0$. We fix $\nu$ so that $\log(1-\nu)>\chi/2$. 

Now, we use the following Lemma from linear algebra:
\begin{lemm}\label{l.diag}
For every $C_1>1$ and $\chi <0$, 
there exists $\alpha>0$ 
such that the following holds: 
Let $A_i : V_i \to V_{i+1}$ $(i \in \mathbb{Z}/\pi \mathbb{Z})$ 
be a sequence of isomorphims between 
two dimensional Euclidean vector spaces
with norms $\|A_i^{\pm}\|<C_1$.
Suppose the product $ A_{\pi-1}\cdots A_0$ is diagonalizable and 
the Lyapunov exponents of it satisfy the following:
\begin{itemize}
\item The smaller one is less than $\chi$.
\item The larger one is greater than $\chi/2$.
\end{itemize}
then, there is $i\in\ZZ/\pi\ZZ$ on which the 
matrix of coordinate change $B_i$ (defined as above)
has a norm $\|B_i\|$ smaller than $\alpha$. 
\end{lemm}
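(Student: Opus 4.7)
The strategy is to exploit the upper-triangular form of each $A_i$ in a well-chosen orthonormal basis, and then use the prescribed gap in the Lyapunov spectrum to force a lower bound on the angles between the two Oseledets directions at some time~$i$. The main obstacle will be to obtain a bound on $\alpha$ that depends only on $C_1$ and $\chi$ (and \emph{not} on the period $\pi$); the key observation resolving this is that both the Lyapunov gap and the potential angular defect scale linearly with $\pi$, so the ratio remains controlled.

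\textbf{Step 1: Triangularization in orthonormal coordinates.} Let $u_i$ denote a unit vector spanning $U_i$ (the image of the most contracted eigenline after $i$ steps) and let $w_i$ be a unit vector in $W_i$. Let $\theta_i \in (0,\pi/2]$ be the angle between $u_i$ and $w_i$, so that $\|B_i\|$ is of order $1/\sin\theta_i$. Fix an orthonormal basis $\{u_i,u_i^\perp\}$ of $V_i$. A direct computation (decomposing $u_i^\perp = w_i/\sin\theta_i-\cot\theta_i\,u_i$, applying $A_i u_i=\lambda_i u_{i+1}$ and $A_i w_i = \mu_i w_{i+1}$, then expanding $w_{i+1}$ in the basis at $V_{i+1}$) shows that $A_i$ is upper triangular in these orthonormal bases:
\[
A_i = \begin{pmatrix} \lambda_i & \dfrac{\mu_i\cos\theta_{i+1}-\lambda_i\cos\theta_i}{\sin\theta_i} \\[4pt] 0 & \mu_i\,\dfrac{\sin\theta_{i+1}}{\sin\theta_i} \end{pmatrix}.
\]
Since $\|A_i^{\pm 1}\|<C_1$ and the bases are orthonormal, each entry is bounded by $C_1$ and each diagonal entry lies in $[C_1^{-1},C_1]$ (up to sign).

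\textbf{Step 2: Angular control forces $\lambda_i\approx\mu_i$.} Assume for contradiction that $\theta_i<\alpha$ for every $i$, where $\alpha>0$ will be chosen. The bound on the $(1,2)$-entry gives
\[
|\mu_i\cos\theta_{i+1}-\lambda_i\cos\theta_i|\le C_1\sin\theta_i.
\]
Using $\cos\theta=1+O(\theta^2)$ and $|\lambda_i|,|\mu_i|\le C_1$, this yields $|\mu_i-\lambda_i|\le C_3(\theta_i+\theta_{i+1})$ for a constant $C_3=C_3(C_1)$ (provided $\alpha$ is, say, at most $1$). Since $|\lambda_i|\ge 1/C_1$, we deduce
\[
\bigl|\log|\mu_i/\lambda_i|\bigr|\le C_4(\theta_i+\theta_{i+1})
\]
for a further constant $C_4=C_4(C_1)$.

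\textbf{Step 3: Telescoping vs.\ the Lyapunov gap.} The hypothesis $\tilde A^\pi := \prod_i \tilde A_i=\operatorname{diag}(\mu_1,\mu_2)$ (where $\tilde A_i$ are the matrices above) and the telescoping of the $\sin\theta_i$-factors give $\prod_i\lambda_i=\mu_1$ and $\prod_i\mu_i=\mu_2$. Summing the estimate from Step 2,
\[
\bigl|\log|\mu_2/\mu_1|\bigr| = \Bigl|\sum_i \log|\mu_i/\lambda_i|\Bigr| \le 2C_4\sum_i\theta_i \le 2C_4\pi\alpha.
\]
On the other hand, the hypotheses on Lyapunov exponents give $\log|\mu_1|<\chi\pi$ and $\log|\mu_2|>\chi\pi/2$, whence $\log|\mu_2/\mu_1|>-\chi\pi/2>0$. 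Combining,
\[
-\chi\pi/2 < 2C_4\pi\alpha,\qquad\text{i.e.}\qquad \alpha > \frac{-\chi}{4C_4}.
\]
Therefore, choosing any $\alpha<-\chi/(4C_4)$ (which depends only on $C_1$ and $\chi$) yields a contradiction. Hence some $\theta_i\ge\alpha$, which translates into a bound $\|B_i\|\le 1/\sin\alpha$, as required.

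\textbf{Main obstacle and why it dissolves.} A priori one might fear that the argument loses control as $\pi\to\infty$: an individual angle estimate in Step 2 is only linear in $\theta_i$, but the product $A^\pi$ sees all $\pi$ factors. The crucial point is that both sides of the Step 3 inequality are proportional to $\pi$, so $\pi$ cancels and the threshold $\alpha$ depends only on $C_1$ and $\chi$. The proof is then just bookkeeping to make the implicit constants $C_3,C_4$ explicit, and to choose $\alpha$ small enough that all the Taylor expansions used in Step 2 are valid.
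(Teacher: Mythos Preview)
Your argument is correct and follows the same overall architecture as the paper's: assume all angles $\theta_i$ between the two eigendirections are small, deduce that the per-step expansion rates along $U_i$ and $W_i$ are nearly equal, sum over the orbit, and contradict the prescribed gap $(-\chi/2)\pi$ between the two Lyapunov exponents. In both proofs the crucial feature is that both sides of the final inequality scale linearly with $\pi$, so the resulting threshold depends only on $C_1$ and $\chi$.

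Where you differ is in the mechanism for ``small angle $\Rightarrow$ nearly equal expansion rates''. The paper isolates this as a separate soft lemma (Lemma~\ref{l.distor}): by compactness, for any $\kappa>1$ there is $\tau>0$ so that $\|A u\|/\|A v\|\in(\kappa^{-1},\kappa)$ whenever $u,v$ are unit vectors at angle $<\tau$ and $\|A^{\pm1}\|<C_1$. Applying this with $\kappa=e^{-\chi/4}$ and iterating gives $|\mu_2/\mu_1|<e^{-\chi\pi/4}$, contradicting $|\mu_2/\mu_1|>e^{-\chi\pi/2}$. You instead write each $A_i$ explicitly in upper-triangular form in the orthonormal frame $\{u_i,u_i^\perp\}$, read off the bound on the $(1,2)$-entry, and extract $|\log|\mu_i/\lambda_i||\le C_4(\theta_i+\theta_{i+1})$ directly. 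Your route is more computational but yields an explicit constant; the paper's is shorter and coordinate-free. One minor correction: the product $\prod_i\tilde A_i$ is upper triangular in the basis $\{u_0,u_0^\perp\}$, not diagonal (since $w_0\neq u_0^\perp$ in general), but this is harmless since you only use its diagonal entries, which are indeed the eigenvalues $\prod_i\lambda_i$ and $\prod_i\mu_i$ after the $\sin\theta$-factors telescope.
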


We apply this Lemma \ref{l.diag} to our situation
letting $C_1 = C$, $\chi$ given in the hypotheses 
of Proposition \ref{p.weak-flexible},
$V_i = T_{f^{i}(p)}\Lambda|_{E^s}$ and $A_i = Df(f^i(x))|_{E^s}$. 
It implies that there is a constant $\alpha$  
depending only on $C_1, \chi$ 
which there is at least a point $f^i(p)$ of the orbit of $x$ 
where $\|B_i\|<\alpha$.  
Thus by replacing $p$ with $f^i(p)$ 
we can assume $\|B_0\|<\alpha$. 
In other words, up to a conjugacy by matrices 
in $\mathrm{SL}(2,\RR)$ bounded in norm by $\alpha$, 
one may assume that $Df^\pi(x)$ is diagonal. 
This bounded change of coordinates induces 
a bounded change of the notion of perturbations
(remember that, for matrices in $SL(2,\RR)$ the norm of the matrix 
and of its inverse are the same).  

Thus, up to multiplication by some constant, we can assume that
a $\delta$-perturbation with respect to this coordinate 
is also a $\delta$-perturbation to the orthonormal coordinate.
So we fix some coordinate near
$T_{f^{i}(p)}\Lambda|_{E^{s}}$ explained as above 
and continue the proof.

Let us prove the Lemma \ref{l.diag}. First, note that 
a simple compactness argument shows the following.
\begin{lemm}\label{l.distor}
Let $C_1> 1$. Then, for every $\kappa > 1$ there exists 
$\tau >0$ such that the following holds:
For every $A \in \mathrm{GL}(2, \mathbb{R})$ 
satisfying $\|A\|, \|A^{-1}\| < C_1$, if $u, v$ are unit vector 
such that the angle between them is less than $\tau$, 
then $1/\kappa<\|Au\|/\|Av\| <\kappa$.
\end{lemm}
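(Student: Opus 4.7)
The plan is to prove Lemma~\ref{l.distor} by a direct quantitative distortion estimate. The paper suggests a compactness argument (on the set of matrices with $\max(\|A\|,\|A^{-1}\|)\leq C_1$, which is compact in $\mathrm{GL}(2,\RR)$), but an explicit estimate is arguably cleaner and makes the uniformity of $\tau$ in $A$ transparent.

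First, I would observe that for unit vectors $u,v$ whose angle is bounded by $\tau$, elementary planar geometry gives $\|u-v\|\leq 2\sin(\tau/2)\leq \tau$. Together with $\|A\|<C_1$, this yields
\[
\bigl|\,\|Au\|-\|Av\|\,\bigr|\leq\|A(u-v)\|\leq C_1\,\tau.
\]
Second, the hypothesis $\|A^{-1}\|<C_1$ provides the crucial uniform lower bound: for every unit vector $w$,
\[
\|Aw\|\;\geq\;\|w\|/\|A^{-1}\|\;>\;1/C_1.
\]
Dividing the first inequality by $\|Av\|\geq 1/C_1$ then produces
\[
\Bigl|\,\frac{\|Au\|}{\|Av\|}-1\,\Bigr|\;\leq\;C_1^{2}\,\tau.
\]

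Finally, given $\kappa>1$, it suffices to set $\tau:=\min(\kappa-1,\,1-\kappa^{-1})/C_1^{2}$ to conclude that $\|Au\|/\|Av\|\in(1/\kappa,\kappa)$. The resulting $\tau$ depends only on $C_1$ and $\kappa$, uniformly in $A$; this uniformity is the entire content of the lemma, and it is precisely what the bound on $\|A^{-1}\|$ supplies. Since the estimate is fully quantitative, there is no real obstacle — the only thing that could go wrong in a less careful argument is losing track of the dependence on $A$, which the explicit lower bound $\|Av\|\geq 1/C_1$ averts.
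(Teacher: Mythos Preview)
Your proof is correct. The paper does not actually write out a proof of this lemma; it merely asserts that ``a simple compactness argument'' yields it, presumably by noting that the set $\{A\in\mathrm{GL}(2,\RR):\|A\|,\|A^{-1}\|\leq C_1\}$ is compact and that $(A,u,v)\mapsto \|Au\|/\|Av\|$ is continuous and equals $1$ on the diagonal $u=v$. Your route is genuinely different: instead of appealing to compactness and uniform continuity, you extract the explicit bound $\bigl|\|Au\|/\|Av\|-1\bigr|\leq C_1^{2}\tau$ directly from the two-sided norm control, and this yields a concrete formula for $\tau$ in terms of $C_1$ and $\kappa$. The advantage of your approach is that it is fully constructive and makes the role of the hypothesis $\|A^{-1}\|<C_1$ completely transparent (it is exactly what gives the lower bound $\|Av\|>1/C_1$); the compactness approach is softer and would generalize more readily to other continuous functionals, but gives no quantitative information. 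Either argument is adequate for the use the paper makes of the lemma.
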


Then, let us prove Lemma \ref{l.diag}.
\begin{proof}[Proof of Lemma \ref{l.diag}]
First, we fix $C_1>0$ and $\chi <0$. Then, 
apply Lemma $\ref{l.distor}$ for this $C_1$ letting 
$\kappa = \exp(-\chi/4)$ and fix $\tau$.
Let us see how we fix $\alpha$. 
We fix $\alpha$
sufficiently large so that the following holds:
if $B \in \mathrm{SL}(2, \mathbb{R})$ is the matrix 
representing the change of the basis with $\|P\|> \alpha$, 
then the corresponding angle is less than $\tau$.

For such choice of $\alpha$, we show the existence of 
good $i$ where the corresponding 
matrix $B_i$ has norm less than $\alpha$.
Suppose not, that is, every $B_i$ has norm greater than $\alpha$.
This implies that on each $T_{f^{i}(x)}|_{E^s}$, the image 
of eigenvectors have angle less than $\tau$. 
Then, Lemma \ref{l.distor} (using it inductively) 
implies that if $u$ and $v$ are 
eigenvectors in $V_0$, then for each $i$ we have
$\exp((\chi/4)i) < \|(A_{i-1}\cdots A_0)(v)  \|/\|(A_{i-1}\cdots A_0)(u)  \| 
< \exp(-(\chi/4)i)$,
but this contradicts to the hypotheses on the Lyapnov exponents
of the first return map $A_{n-1}\cdots A_0$.
\end{proof}

\subsubsection{Creating homothety}

Next lemma, inspired from \cite{BDP} or \cite{S} provides the announced homothety which is used to realize the division of 
perturbations:

\begin{lemm}\label{l.homoth}
Let $f\in \mathrm{Diff}^1(M)$ and $\La$ be a non-trivial basic set of 
stable index $2$.
Suppose there exists a hyperbolic periodic point $q$
which has a contracting complex eigenvalue.
Then,  there exists  diffeomorphisms $g$,  $C^1$-arbitrarily close to $f$
which has a periodic point $r$ such that 
Whose differential to $dg^{\mathrm{per}(r)}$ 
restricted to the stable direction is a contracting homothety.
Furthermore, the support of the perturbation 
from $f$ to $g$ can be taken 
arbitrarily close to an orbit of a periodic point of $f$.
\end{lemm}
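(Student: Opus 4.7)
The strategy is to combine the rotational structure provided by the complex eigenvalue at $q$ with a transition periodic orbit constructed via Lemma~\ref{l.transition1}, and then close the gap to an exact homothety by Franks' lemma.

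First, choose an identification of $E^s$ over the orbit $\cO(q)$ in which the first-return cocycle is represented by the single matrix $A_q = \lambda R(\theta)$, with $\lambda \in (0,1)$ and $\theta \notin \pi\ZZ$. If $\theta$ is a rational multiple of $\pi$ with small denominator, I would begin with an arbitrarily small Franks perturbation supported on $\cO(q)$ replacing $\theta$ by an irrational value, so that $\{l\theta \bmod 2\pi\}_{l\in\NN}$ is dense in $[0,2\pi)$. Since $\Lambda$ is non-trivial, fix another hyperbolic periodic point $x_2\in\Lambda$ and apply Lemma~\ref{l.transition1} with $x_1=q$ and $x_2$ to obtain, for any prescribed $\varepsilon_2>0$, transition matrices $T_1,T_2$ governing heteroclinic excursions between them (expressed in the chosen rotation basis).

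The core step is a linear-algebra observation about $S := T_2 T_1 \in \mathrm{GL}(2,\RR)$. Taking its singular value decomposition $S = U\Sigma V^T$, with $U,V\in SO(2)$ (after a harmless sign adjustment if $\det S<0$) and $\Sigma=\mathrm{diag}(\sigma_1,\sigma_2)$, a direct computation gives
\[
\Sigma\, R(\pi/2)\, \Sigma \;=\; \sigma_1\sigma_2\, R(\pi/2),
\]
so $\Sigma R(\pi/2)\Sigma$ is conformal. From this I read off the unique angle $\phi_3^\star$ satisfying $V^T R(\phi_3^\star) U = R(\pi/2)$, and then the unique $\phi_1^\star$ such that
\[
S\, R(\phi_3^\star)\, S\, R(\phi_1^\star) \;=\; \sigma_1\sigma_2\, \mathrm{Id}.
\]
Using density of $\{l\theta \bmod 2\pi\}$, I pick large integers $l_1\neq l_3$ with $l_i\theta$ within $\eta>0$ of $\phi_i^\star$ modulo $2\pi$ for $i=1,3$, and with $l_1+l_3$ so large that $\mu := \lambda^{l_1+l_3}\sigma_1\sigma_2\in(0,1)$.

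Applying Lemma~\ref{l.transition1} with itinerary $L=(l_1,0,l_3,0)$ (permitted since $l_1\neq l_3$) produces a periodic point $r_f\in\Lambda$ of $f$ whose stable first-return derivative is $\varepsilon_2$-close, factor by factor, to
\[
S\, A_q^{l_3}\, S\, A_q^{l_1} \;=\; \lambda^{l_1+l_3}\, S\, R(l_3\theta)\, S\, R(l_1\theta),
\]
which by the previous paragraph lies within distance $O(\eta)$ of the contracting homothety $\mu\,\mathrm{Id}$. Choosing $\varepsilon_2$ sufficiently small (after the combinatorial data $L$ has been fixed) one then invokes Franks' lemma along $\cO(r_f)$: at a single iterate the derivative is replaced by the unique matrix that turns the total first-return product into exactly $\mu\,\mathrm{Id}$. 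This correction has size $O(\eta+\varepsilon_2)$, so the complete perturbation $g$ is $C^1$-arbitrarily close to $f$; its support lies in arbitrarily small neighborhoods of the two periodic orbits $\cO(q)$ and $\cO(r_f)$ of $f$; and the continuation $r$ of $r_f$ under $g$ satisfies $Dg^{\mathrm{per}(r)}(r)|_{E^s}=\mu\,\mathrm{Id}$, as required.

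The main obstacle is the linear-algebra step of the third paragraph: the complex eigenvalue at $q$ is used exactly to supply, via equidistribution of $\{l\theta\bmod 2\pi\}$, two independent rotations $R(l_1\theta),R(l_3\theta)\in SO(2)$, and one must verify that these two rotations alone are enough to absorb the non-conformal matrix $S=T_2T_1$ and produce a scalar. The identity $\Sigma R(\pi/2)\Sigma = \sigma_1\sigma_2 R(\pi/2)$ is the structural fact that makes two rotations suffice; simultaneously solving for the second angle $\phi_1^\star$ once $\phi_3^\star$ is fixed is the heart of the calculation and is what ultimately distinguishes the complex-eigenvalue case from the real-eigenvalue case (where no such pair $(\phi_1^\star,\phi_3^\star)$ would exist).
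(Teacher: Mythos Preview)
Your approach is genuinely different from the paper's, and the core linear-algebraic idea is sound. The paper does not use your SVD identity $\Sigma R(\pi/2)\Sigma=\sigma_1\sigma_2 R(\pi/2)$ at all. Instead it argues as follows: take a periodic orbit whose stable cocycle is (up to $\varepsilon_2$) of the form $T\cdot A_q^{\,l}$ for a single transition $T$; for a well-chosen $l_0$ one has $A_q^{\,l_0}$ close to the homothety $H=\lambda^{l_0}\mathrm{Id}$, so for $l=Nl_0$ the cocycle looks like $T$ followed by $N$ blocks each close to $H$; then Lemma~\ref{l.annihi} (which only needs path-connectedness of $\mathrm{GL}_+(2,\RR)$) provides perturbations of $h$ of those blocks that annihilate $T$. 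Your route trades that modular absorption lemma for a single explicit computation, at the price of a more elaborate itinerary $(l_1,0,l_3,0)$ with two visits to $q$; each method buys what the other pays for.

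Two points in your write-up need repair. First, the order of quantifiers is inverted: in Lemma~\ref{l.transition1} the transition words $(T_i^j)$, hence $S=T_2T_1$ and the target angles $\phi_i^\star$, are produced \emph{after} $\varepsilon_2$ is fixed, so you cannot ``choose $\varepsilon_2$ sufficiently small after $L$ has been fixed''. The correct order is: given the desired $C^1$-smallness, fix $\varepsilon_2$ and $\eta$; obtain $T_1,T_2$, compute $\phi_i^\star$; then pick $l_1\neq l_3$ by density. Second, the single-iterate Franks correction does not have size $O(\eta+\varepsilon_2)$ as written: the norms of the partial products $(C_N\cdots C_{k+1})^{-1}$ and $(C_{k-1}\cdots C_1)^{-1}$ can be as large as $C^{\pi(x_L)}$, so correcting the whole product at one point can be enormous. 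The standard fix is to apply Franks' lemma at \emph{every} iterate to replace each factor by the corresponding model factor (cost $\varepsilon_2$ per point), and then make two further $O(\eta)$ rotational corrections, one inside each $q$-block, to turn $l_i\theta$ into $\phi_i^\star$ exactly. With these adjustments your argument goes through.
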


The proof of Lemma \ref{l.homoth} is 
essentially done in \cite{BDP} (see Proposition 2.5 of \cite{BDP})
or \cite{S} (see Lemma 3.3 of \cite{S}). Thus we only present 
the sketch of the proof. 
\begin{proof}[Sketch of proof] 
The proof is a direct consequence
of Lemma~\ref{l.annihi} and a variant of Lemma ~\ref{l.transition1}: 
The idea of Lemma~\ref{l.transition1} is that there are periodic 
orbits whose differential restricted to the 
stable direction is an arbitrarily small 
perturbation of a product of a fixed transition matrix $T$ 
with an arbitrary power of the differential of the point $q$, 
which has a complex stable eigenvalue. 
Large powers of a matrix in $\mathrm{GL}(2,\RR)$ with 
a complex eigenvalue admits perturbations 
so that the product is an homothety. 
Therefore, $\La$ contains 
periodic orbits whose stable derivative are arbitrarily 
small perturbation of the product of the transition 
matrix $T$ by an arbitrary power of an homothety.  
Now Lemma~\ref{l.annihi} allows to perform a small 
perturbation along the orbit to cancel the
fixed intermediate  differentials.  
As a result, such periodic orbit has a homothety
to the stable direction. 
\end{proof}

\subsubsection{Creating flexible points: the proof of Proposition~\ref{p.weak-flexible}}

Now we are ready for starting the proof of 
Proposition~\ref{p.weak-flexible}. Let us start.
\begin{proof}
Let $C >1$ and $\chi<0$ be given, and
let $K$ be the hyperbolic periodic points of stable index $2$
containing periodic points $p$ and $q$ as is in the statement. 

According to Lemma~\ref{l.diag}, 
we can fix coordinates on $T\mathcal{O}(p)|_{E^s}$
so that we may assume that:
\begin{itemize}
 \item 
At the point $p$ of period $\pi(p)$
the first return map has the form of diagonal matrix
 \[
Df^{\pi(p)}|_{E^s}(p) =  P:=
\begin{pmatrix}
\lambda_1 & 0 \\
0              &  \lambda_2
\end{pmatrix}
\]
 with $|\lambda_1| \in[1-\nu,1)$  and the
smaller Lyapunov exponent 
$\left( 1/{\pi(p)} \right) \log|\lambda_2|$ is less that $\chi$.
\end{itemize}

Furthermore, according to Lemma~\ref{l.homoth}, by 
giving arbitrarily small perturbation
whose support is away from some neighborhood of 
$\mathcal{O}(p)$ and changing $r$ with $q$, 
we can assume that:
\begin{itemize}
\item At the point $q$, the first return map is a contracting homothety,
that is, we have
\[
Df^{\pi(q)}|_{E^s}(q) = Q :=
\begin{pmatrix}
r & 0 \\
0              &  r
\end{pmatrix},
\]
where  $\pi(q)$ denotes the period of $q$ and $r \in (0, 1)$. 
\end{itemize}

We choose $\nu$ so that multiplying $Df$ by the homothety of ratio $(1-\nu)^{-2}$ is a $\varepsilon/2$ perturbation.
Note that the choice of $\nu$ can be determined only 
from the value of $C$ and $\chi$.

For proving Proposition~\ref{p.weak-flexible}, it remains to 
show that arbitrarily small perturbation of $f$ may create $\varepsilon$-flexible point in $\La$.
We apply Lemma \ref{l.transition1}: there are transition matrices $T_1,T_2$ so that
 given any  $L = (l_1, l_2, l_3, l_4)$ with $(l_1,l_2)\neq (l_3,l_4)$,
we can find a periodic point 
$x_L$ whose first return map of differential cocycle 
restricted to the stable direction admits 
a small perturbation so that the first return map is:
\[  
T_2 Q^{l_4} T_1 P^{l_3}T_2 Q^{l_2} T_1 P^{l_1}.
\]
Indeed, by fixing $\varepsilon_2$ in the statement of 
Lemma \ref{l.transition1} (which can be taken 
arbitrarily small),
we can assume, by performing an 
$\varepsilon_2$-perturbation, that the differential 
along $x_L$ is indeed given by this matrix.
Now, let us choose $L$ so that the point $x_L$ can be 
perturbed to an $\varepsilon$-flexible point. 

 \subsubsection{First case:   
$T_i \in \mathrm{GL}_{+}(2, \mathbb{R})$, and $\lambda_i>0$}
From here the proof varies depending on 
the signature of $\det (T_i)$ and $\lambda_i$.
First, we consider the case
$T_1, T_2 \in \mathrm{GL}_{+}(2, \mathbb{R})$ 
and $\lambda_{1}, \lambda_2>0$.
The case where some of them are 
in $\mathrm{GL}_{-}(2, \mathbb{R})$  or $\lambda_i <0$
can be treated very similarly. We will explain 
how one can adapt the proof in the other 
cases at the end of this proof.

Let us continue the proof.
First, we apply Lemma \ref{l.annihi}
(letting $Q=Q$ and $T_1 = T$). 
Then we can find a sequence of matrices $(L_i)$ ($i = 0,\ldots, i_1-1$)
such that each $L_i$ is arbitrarily close to the homothety $Q$
and $T_2 (\prod_{i=0}^{i_1-1}L_i) = c_1 \mathrm{Id}$. 
Similarly, we take $(J_i)$ $(i = 0,\ldots, i_2-1)$ so that 
each $J_i$ are arbitrarily close to $Q$ and
$(\prod_{i=0}^{i_2-1} J_i)T_1 =  c_2 \mathrm{Id}$ holds.  
We also fix an integer $i_3$ such that
such that the multiplication
of the rotation $R(\pi t/2 i_3 )$ to the matrix $Q$ 
is an arbitrarily small perturbation of $Q$ for every $t \in [-1, 1]$. 

Then, let us fix $L = (l_1, l_2, l_3, l_4)$ as follows:
\begin{itemize}
\item $l_1 = l_3$ are sufficiently large integers 
so that $\mathcal{O}(x_L)$ contains $\mathcal{O}(\gamma_n)$
in its $\varepsilon/2$-neighborhood 
with respect to the Hausdorff distance (see Remark \ref{r.distance}).
\item $l_2 \neq l_4$ and $l_2, l_4 > i_1 + i_2 + i_3$.
\item The following inequality holds:
\[
\mu_L := \exp \left( \frac 1{\pi(x_L)}\cdot\log\left( (c_1c_2)^2 r^{l_2 + l_4 - 2(i_1 +i_2)}\lambda_1^{l_1+l_3}\right) \right) > 1 -\nu.\]
\end{itemize}
We can take such $L$ as follows: first take $l_2, l_4$ 
satisfying the second condition 
and later take $l_1$ and $l_3$ so large 
that the rest of the conditions are satisfied
(remember that 
$\pi(x_L) =(l_1 +l_3)\pi (p) + (l_2 +l_4) \pi (q) + 2(j_1 +j_2)$,
thus just by taking large $l_1, l_3$ we can obtain the inequality
in the third condition).

First, we perform a preliminary 
perturbation along $x_L$ so that the following holds
(we denote the perturbed map also by $f$):
\begin{itemize}
\item For $k =  K\pi (q)+ l_1\pi (p)+ j_1$ or 
$k= K\pi_2+ (l_1+l_3) \pi(p) + l_2\pi (q)+ 2j_1+j_2$
($K=0, \ldots, i_1 -1$), we have
\[\prod_{i=0}^{\pi (q) -1} 
Df(f^{k+i} (x_L) )= L_k.\]
\item  For $k =  K\pi (q)+ l_1\pi (p)+ j_1$ or 
$k = K\pi (q)+ (l_1+l_3) \pi (p) + l_2\pi (q)+ 2j_1+j_2$
($K=i_0, \ldots, i_1+i_2 -1$), we have
\[
\prod_{i=0}^{\pi (q) -1} 
Df(f^{i} (x) )= J_k.\]
\end{itemize}
At this moment, by using the fact 
$(\prod J_i) T_1 =c_2 \mathrm{Id}$, 
$T_2 (\prod L_i) = c_1 \mathrm{Id}$ 
and commutativity of homothetic transformation,
we can check that the first return map of the 
derivative of $f$ along $x_L$ has the 
following form: 
\begin{align}
(c_1 c_2)^2 Q^{l_4 -i_1-i_2}  P^{l_3} Q^{l_2 -i_1-i_2}  P^{l_1}
 =(c_1 c_2)^2Q^{l_2 + l_4 - 2(i_1 +i_2)}  P^{l_1+l_3}, \tag{$\dagger$}
\end{align}
in particular, 
$x_L$ has two real contracting eigenvalues 
$(c_1c_2)^2 r^{l_2 + l_4 - 2(i_1 +i_2)}\lambda_j^{l_1+l_3}$, $j=1,2$. 

We show that $x_L$ is $\varepsilon$-flexible for this $f$
by construction the path of linear cocycles for the flexibility. 
Recall that the path $\cA_t$  ($t\in[-1,1]$) we need to construct is the 
one which joins the cocycle $\cA_0$ induced by $Df$ on 
the stable bundle to a cocycle $\cA_{-1}$ whose return map is a homothety, and 
to a cocycle $\cA_{1}$ having an eigenvalue equal to $1$. 

First we build the path from $\cA_0$ to $\cA_1$.  
For that we only need to multiply $\cA_0$ along the orbit 
of $x_L$ by a homothety of ratio $\mu_L^{-1} \cdot \exp(t)$.
This gives a path of contracting cocycles
 between the original one and 
a homothetic one. 
Our second condition on the choice of $L$ implies 
that the ratio of this homothety is 
always less than $(1-\nu)^{-2}$ (and greater than one).
Our choice of $\nu$ implies that the multiplication by such 
a homothety remains an $\varepsilon$-perturbation of $\cA_0$.

Now let us build the path $\cA_t$ for $t\in[-1,0]$.  
For that purpose, 
we rewrite the product of the differential in the following way:
\begin{align*}
&T_2 \left(\prod_{k=0}^{i_1-1}L_k\right) Q^{l_4-i_1-i_2}
\left(\prod_{k=0}^{i_2-1}J_k \right)  T_1
P^{l_3}T_2 \left(\prod_{k=0}^{i_1-1}L_k\right) Q^{l_2-i_1-i-2} 
\left(\prod_{k=0}^{i_2-1}J_k \right) T_1   P^{l_1}\\
=&\left(c_1\mathrm{Id}\right) \underbrace{(Q^{i_3})}_{(\ast \ast )}  Q^{l_4-i_1-i_2-i_3} \left(c_2 \mathrm{Id}\right) 
P^{l_1} \left(c_1\mathrm{Id}\right)\underbrace{(Q^{i_3})}_{(\ast )} Q^{l_2-i_1-i_2-i_3} \left(c_2 \mathrm{Id}\right) P^{l_1}.
\end{align*}
In this product, 
we replace each of the homothetic matrix $Q$ in the first $Q^{i_3}$ 
(indicated by $(\ast)$) by  
$R(\frac{\pi t}{2 i_3 })\circ Q$ and 
the ones in the second one  
(indicated by $(\ast \ast)$) by 
 $R(\frac{- \pi t}{2 i_3} )\circ Q$.
By the choice of $i_3$, 
this perturbation can be done very small for every $t \in [-1, 0]$,
especially, it can be done with size less than $\varepsilon$. 
The effect of this perturbation on the product matrix is to replace
the whole product in ($\dagger$) to 
\[
(c_1 c_2)^2Q^{l_2 + l_4 - 2(i_1 +i_2)}  R\left(-{\frac{\pi}2}t\right)
P^{l_1} R\left( {\frac\pi 2}t \right)P^{l_3}.
\]
Now Lemma~\ref{l.homoper} ensures (remember $l_1 = l_3$) that the product matrix 
of $\cA_t$ in the period has two different
real contracting positive eignevalues for $t\neq -1$ and is a homothety for $t=-1$.  

Thus combining these two paths, we have completed the proof.
\end{proof}

\subsubsection{Other cases: matrices in $\mathrm{GL}_{-}(2,\RR)$
or negative eigenvalues}

Finally, let us consider the case where some of the signs are negative.

First, in the case where $\lambda_1$ or $\lambda_2$ are negative,  we just need to take $l_1, l_3 $ to be even numbers:
this replaces the matrix $P$ by $P^2$ everywhere in the proof, 
and the proof works identically.

In the case where one but only one of the transition 
$T_i$, say $T_1$ for instance, reverses the orientation. 
Then \cite{BDP} allows us to consider  the matrix 
$\tilde T_1= T_1P^i T_2 Q^j T_1$ 
as a new transition substituting $T_1$ 
(keeping $T_2$ unchanged as the other transition matrix), and now 
$ \tilde{T}_1,T_2$ 
both belongs to $\mathrm{GL}_+(2,\RR)$. 

It remains the case where $T_1$ and $T_2$ are both orientation reversing.  
In this case, we apply Lemma \ref{l.annihi} to the matrix
$$
T_i\begin{pmatrix}
1 & 0 \\
0              &  -1
\end{pmatrix},
$$
which provides a sequence of matrices $(L_i)$ ($i = 0,\ldots, i_1-1$)
arbitrarily close to the homothety $Q$
so that  
$$T_2 \left( \prod_{i=0}^{i_1-1}L_i \right) = (c_1 \mathrm{Id})\circ \begin{pmatrix}
1 & 0 \\
0              &  -1
\end{pmatrix} .$$ 
Similarly, we take $(J_i)$ $(i = 0,\ldots, i_2-1)$ arbitrarily close to $Q$ so that
$$
\left( \prod_{i=0}^{i_2-1} J_i \right)T_1 =  (c_2 \mathrm{Id})\circ
\begin{pmatrix}
1 & 0 \\
0              &  -1
\end{pmatrix}.
$$  
Then the proof works identically, just noticing that the matrix $\begin{pmatrix}
1 & 0 \\
0              &  -1
\end{pmatrix}$ is an involution which commutes with $P$ and $Q$.

\begin{rema}
In the proof, we can assume that the first return map of
differential to the unstable direction
is also orientation preserving
by adjusting the number of $l_i$ if necessary. 
\end{rema}

\vskip 5mm
\begin{tabular}{ll}
Christian Bonatti & Katsutoshi Shinohara
\\
\footnotesize{bonatti@u-bourgogne.fr} &  \footnotesize{herrsinon@07.alumni.u-tokyo.ac.jp}
\\
Institut de Math\'ematiques de Bourgogne, & CNRS - UMR 5584
\\
Universit\'e de Bourgogne 9 av. A. Savary,&
21000  Dijon, France

\end{tabular}

\end{document}